\documentclass[reqno,10pt]{amsart}

\usepackage{filecontents}\usepackage[margin=3cm]{geometry}
\usepackage{amssymb,amsfonts}
\usepackage{mathtools}
\usepackage{nicematrix}
\usepackage{enumerate}
\usepackage{mathrsfs}

\usepackage{thm-restate}
\usepackage{mathtools}
\usepackage{xcolor}
\usepackage{hyperref}
\usepackage{cleveref}

\usepackage{blkarray}
\usepackage{amsthm}
\theoremstyle{definition}
\newtheorem{thm}{Theorem}[section]
\theoremstyle{definition}

\newtheorem{cor}[thm]{Corollary}
\newtheorem{prop}[thm]{Proposition}
\newtheorem{lem}[thm]{Lemma}

\newtheorem{assume}[thm]{Assumption}

\newtheorem{defn}[thm]{Definition}

\newtheorem{exmp}[thm]{Example}
\newtheorem{nonexmp}[thm]{Non-example}
\newtheorem{exmps}[thm]{Examples}
\newtheorem{notn}[thm]{Notation}

\newtheorem{rem}{Remark}[section] 
\def\A{{\mathbb A}}
\def\C{{\mathbb C}}
\def\Z{{\mathbb Z}}

\def\Q{{\mathbb Q}}
\def\A{{\mathbb A}}

\def\F{{\mathbb F}}
\def\N{{\mathbb N}}

\def\C{{\mathbb C}}

\DeclareMathOperator{\End}{End}

\DeclareMathOperator{\GL}{GL}

\DeclareMathOperator{\Id}{Id}

\DeclareMathOperator{\cris}{cris}
\DeclareMathOperator{\Hom}{Hom}

\DeclareMathOperator{\Spec}{Spec}

\DeclareMathOperator{\sep}{sep}

\DeclareMathOperator{\Gal}{Gal}
\DeclareMathOperator{\Ker}{Ker}

\newcommand{\tensor} {\otimes}

\def\o{{\mathcal O}}

\def\bbG{{\mathbb G}}

\def\F{{\mathbb F}}
\def\F{{\mathbb F}}

\DeclareMathOperator{\cst}{cst}

\DeclareMathOperator{\Rep}{Rep}
\DeclareMathOperator{\Vector}{Vec}

\DeclareMathOperator{\Sh}{Sh}

\DeclareMathOperator{\ur}{ur}

\DeclareMathOperator{\GSpin}{GSpin}

\DeclareMathOperator{\SO}{SO}

\DeclareMathOperator{\Frob}{Frob}
\DeclareMathOperator{\perf}{perf}

\DeclareMathOperator{\coker}{coker}

\DeclareMathOperator{\For}{For}
\DeclareMathOperator{\Aut}{Aut}

\usepackage{tikz-cd}
\usepackage[all,cmtip]{xy}

\hypersetup{colorlinks=true,
linkcolor=blue
}

\begin{document}
\title{ Local monodromy of unit root F-isocrystals from Shimura varieties}
\author{Tejasi Bhatnagar}
\address{Department of Mathematics, The Ohio State University.}
\email{tejasi.bhatnagar@gmail.com}
\keywords{Abelian varieties, Galois representaions, F-isocrystals, monodromy}

\begin{abstract}
We generalise a local $p$-adic monodromy theorem of Igusa that studies the monodromy representation associated to the universal elliptic curve around a supersingular point of the modular curve. Let $X$ be a smooth quasi-projective variety over $\F_p$. We set up and prove the analog of Igusa's theorem for overconvergent F-isocrystals on $X$ such that the action of the Frobenius is algebraic, $p$-plain, and semisimple at the closed points of $X$. We study the local monodromy of their unit root sub-objects around a point in $X$ with isoclinic Newton slopes. In particular, our result generalises Igusa's Theorem to overconvergent F-isocrystals arising from Shimura varieties, unconditionally for Shimura varieties of abelian type and conditional on Frobenius semisimplicity for exceptional Shimura varieties where this property is not known yet. In the particular case of Siegel Shimura varieties, we also prove an analogous result for the local monodromy of a point in the boundary of its compactification. As a consequence of our results, we prove a finiteness result for the reduction of the Hecke orbit of abelian varieties over a local field of equicharacteristic.
\end{abstract}
\maketitle


\section{Introduction}
Throughout this paper, we denote by $K$ the local function field $\F_q((t))$ in characteristic $p$. We let $\mathcal{O}_K$ be its valuation ring and $k$ its residue field.  In \cite{igusa}, Igusa studied the local monodromy representation of the universal elliptic curve around a supersingular point of the modular curve and showed that, as we attach (coordinates of) its $p$-power torsion to the base field $K$, we get totally ramified extensions. More precisely, this yields the following result.

\begin{thm}[Igusa]\label{thm:igusa}
Let $E/K$ be an ordinary elliptic curve with supersingular reduction. Let $\rho_E$ be the Galois representation associated to the $p$-power torsion points of $E$. Then the image of inertia is finite index in the image of $\rho_E$. 
\end{thm}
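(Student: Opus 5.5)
Since $E$ is ordinary over $K=\F_q((t))$, the $p$-power torsion over $K^{\sep}$ is étale of rank one, so $\rho_E$ is a character $\rho_E\colon G_K\to\Z_p^\times$. The aim is to show that $\rho_E(I_K)$ has finite index in $\rho_E(G_K)$. Because $G_K/I_K\cong\hat{\Z}$ is procyclic, it is enough to prove that $\rho_E(I_K)$ is open in $\Z_p^\times$. Then the index is automatically finite.

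\textbf{Step 1: reduce to the universal situation.} After a finite extension of $K$, which does not affect finite index, $E$ extends to a family over $\Spec\mathcal O_K$ with supersingular special fibre. Choosing a level structure away from $p$, we get a map $\Spec\mathcal O_K\to X$, where $X$ is a modular curve over $\F_q$, sending the closed point to a supersingular point $x_0$ and the generic point into the ordinary locus. This map factors through $\Spec\widehat{\mathcal O}_{X,x_0}\cong\Spec k[[u]]$ via $u\mapsto f(t)$, with $f\neq 0$ and $v_t(f)=e\ge 1$. By Serre--Tate, $k[[u]]$ is the universal deformation ring of the supersingular formal group $\widehat E_{x_0}$, which has height $2$.

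\textbf{Step 2: understand the universal character.} Let $\rho^{\rm univ}$ be the character of $\Gal(k((u))^{\sep}/k((u)))$ on the étale quotient $\varinjlim E[p^n]^{\rm et}$ over $k((u))$. This is Igusa's key input: $\rho^{\rm univ}$ is surjective onto $\Z_p^\times$ even on the geometric Galois group, because $k$ is algebraically closed in the relevant tower. Equivalently, the Igusa tower $\mathrm{Ig}(p^n)$ over the ordinary locus is geometrically irreducible and totally ramified of degree $\varphi(p^n)$ over each supersingular point. I would prove this by one of two routes. The first is to compute via the Hasse invariant: over $k((u))$, $E[p]^{\rm et}$ is trivialised by adjoining a $(p-1)$-st root of the Hasse invariant $A$, and $A$ vanishes to order exactly one at $x_0$ (Igusa's simple-zero result). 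This gives tame ramification of full degree $p-1$. For higher levels I would use the connectedness of the Igusa tower.

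\textbf{Step 3: pass to $K$.} The map $k[[u]]\to k'[[t]]$ induces $G_K\to G_{k((u))}$. This map sends $I_K$ into the inertia group, and its image has finite index there, since $k'((t))/k((u))$ is a finite extension (finite residue extension combined with the inclusion $u\mapsto f(t)$). The latter is finite separable up to a purely inseparable part, which is harmless for Galois groups. Moreover $\rho_E=\rho^{\rm univ}\circ(\text{this map})$ up to a twist by a character of the finite residue extension. Hence $\rho_E(I_K)$ has finite index in $\rho^{\rm univ}(I)=\Z_p^\times$, so it is open.

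\textbf{Main obstacle.} I expect the hard part to be Step 2: showing that inertia at the supersingular point has full image $\Z_p^\times$ at all levels $p^n$, not just at level $p$. Here I would try Igusa's irreducibility of the Igusa tower. Alternatively, I would use the Katz/Dwork description of the unit-root $F$-crystal on $k[[u]]$, computing its slope-zero Frobenius over the bounded Robba ring and checking that its monodromy is not potentially unramified.
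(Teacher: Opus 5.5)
Your Steps 1 and 3 are sound: after a finite extension you map $\Spec\mathcal{O}_K$ to the formal neighbourhood $\Spec k[[u]]$ of a supersingular point, and since $K$ is finite over $k((u))$, the group $I_K$ has open image in the inertia of $k((u))$. But these steps only transfer the entire theorem into Step 2, and Step 2 is asserted rather than proved.

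\textbf{The gap in Step 2.} The level-$p$ Hasse invariant computation only shows that $\rho^{\mathrm{univ}}(I)$ surjects onto the finite quotient $(\Z/p)^\times$ of $\Z_p^\times$. That contributes nothing to finite index. What must be shown is that the inertia image is \emph{infinite}, i.e.\ that its wild part (its intersection with $1+p\Z_p$, or with $1+4\Z_2$ when $p=2$) is nontrivial. An infinite closed subgroup of $\Z_p^\times$ is then automatically open.

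For this you invoke connectedness of the Igusa tower, but connectedness does not imply it. Geometric irreducibility of $\mathrm{Ig}(p^n)$ says the \emph{global} geometric monodromy over the ordinary locus is $\Z_p^\times$. Inertia at $x_0$ is a single local subgroup of $\pi_1$ of an affine curve in characteristic $p$, and nothing forces it to be large. For example, connected $\Z_p$-covers of a complete curve that are unramified everywhere exist as soon as the Jacobian has positive $p$-rank.

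In the literature the implication runs the other way. Katz--Mazur first prove total ramification over supersingular points: the Igusa moduli problem is regular, and on a supersingular curve $\ker V^n$ is infinitesimal, so $P=0$ is the only Igusa structure and the fibre is a single point. Irreducibility is then deduced from this. Your second route, showing the unit-root $F$-crystal is not potentially unramified, is by \Cref{thm:tsuzuki_unit_root} just a restatement of the goal (non-overconvergence); no mechanism is offered.

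\textbf{The missing idea} is a way to play the supersingular special fibre off against a potentially unramified generic \'etale part. The paper quotes the statement from Igusa, but its proof of \Cref{proofmainthm} does exactly this without moduli:
\begin{enumerate}
\item The potentially unramified quotient gives a constant overconvergent unit-root object of $\langle M\rangle$.
\item By \Cref{cor:subquotientoverconverget} and \Cref{prop:unitroot_goodredn}, this object extends over $\mathcal{E}_+$ inside $\langle\overline{M}\rangle$ and specializes to a unit-root object of $\langle M_0\rangle$.
\item Since $M_0$ is isoclinic of slope $1/2$ (and decent), the Frobenius eigenvalues on that object are roots of unity, so $G_K/I_K$ acts through a finite group.
\end{enumerate}

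In your rank-one setting there is also a direct fix, which needs neither the universal deformation nor the Hasse invariant. Suppose $\rho_E(I_K)$ were finite. Then over a finite separable extension $K'$ the quotient $E[p^\infty]^{\mathrm{et}}$ is unramified, so it extends to an \'etale $p$-divisible group over $\mathcal{O}_{K'}$. By de Jong's full faithfulness theorem for $p$-divisible groups over $\mathcal{O}_{K'}$, the quotient map $E[p^\infty]\to E[p^\infty]^{\mathrm{et}}$ extends over $\mathcal{O}_{K'}$. But the $p$-divisible group of the N\'eron model has connected (supersingular) special fibre, so any map from it to an \'etale $p$-divisible group is zero. This contradicts the surjectivity of the generic quotient map.
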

Igusa's theorem shows that the $p$-adic Galois representation associated to the $p$-power torsion points of the universal elliptic curve around a supersingular point is highly ramified, much in contrast to the unramified $l$-adic Galois representation coming from the $l$-power torsion points for $l\neq p$. More generally, we can study similar questions for F-isocrystals coming from a compatible system of $l$-adic and $p$-adic coefficient objects on a variety $X$ defined over $\F_p$. At primes $l\neq p$, these are lisse $\Q_l$-sheaves and at $l=p$, these are \textit{overconvergent} F-isocrystals over a variety defined over $\F_p$. They are called \textit{companions} in the sense that they exhibit various properties that arise from the cohomology of a smooth proper variety over $X$. That is, the action of the Frobenius on the fibres of the coefficient objects at closed points $x\in X$ is algebraic and behaves compatibly at all primes. A first example of our interest of such a compatible system is the crystalline cohomology of an elliptic curve over $\F_q$ which is a companion to its $l$-adic \'etale cohomology at primes $l\neq p$.

In fact, Deligne in Weil II \cite{DeligneWeil2} predicted that any irreducible $l$-adic coefficient object (up to a twist) admits companions at every prime. A remarkable application of Deligne's conjecture, now established by works of many people, (see \cite{kedlayacompanion} for a precise history) is studying local systems with \textit{geometric origin} (see for instance \cite{EGintegrality}). An important class of examples studied extensively are canonical $l$-adic local systems and $p$-adic F-isocrystals on mod $p$ fibers of Shimura varieties. We do not undertake the study of compatible coefficient objects and their origins; rather we \textit{use} the properties of such overconvergent F-isocrystals to generalize \Cref{thm:igusa}.

Coming back to Igusa's result, \Cref{thm:igusa} studies the Galois representation coming from the \'etale quotient of the $p$-divisible group of an elliptic curve. In the same spirit, the purpose of this paper is to generalize \Cref{thm:igusa} to study the local monodromy representation associated to the \textit{unit root} sub- objects of overconvergent F-isocrystals over an open punctured disc coming from such a compatible system. 

\subsection{Main results}\label{mainresult}
We now describe our set-up and main results precisely. 
Let $X$ be a smooth quasi-projective variety over $\F_p$. Let $\mathcal{M}^{\dagger}$ be an  overconvergent F-isocrystal on $X$. We write $\mathcal{M}^{\dagger}_x$ for its pullback to a closed point $x\in X$.

\vspace{2mm}
We assume the following for $\mathcal{M}^{\dagger}$.
\begin{enumerate}
\item For all closed points $x\in X$, the characteristic polynomial of the Frobenius on $\mathcal{M}^{\dagger}_x$ has coefficients in a number field, that is, $\mathcal{M}^{\dagger}$ is \textit{algebraic}. 
\vspace{2mm}
\item  At all closed points $x\in X$, if all the roots of the characteristic polynomial of the Frobenius are $l$-adic units for $l\neq p$, that is,  $\mathcal{M}^{\dagger}$ is \textit{$p$-plain}.
\vspace{2mm}
\item The action of the Frobenius on closed points $x\in X$ is semisimple.
\end{enumerate}
\vspace{2mm}
The underlying convergent F-isocrystal $\mathcal{M}$ defines a Newton stratification on $X$. Let $Z$ be a Newton stratum of $X$ such that $\mathcal{M}^{\dagger}|_{Z}$ has a non-trivial unit root sub-object $\mathbb{U}$. Denote by $\overline{Z}$ the closure of $Z$ in $X$. Write $\eta$ to be the generic point of $Z$. Let $x$ be a $k$ point of $\overline{Z}$ which is a specialisation of $\eta$, so that there exists a map $\Spec \o_K=:S\rightarrow \overline{Z}$ sending the closed point of $S$ to $x$ and $\Spec K$ to the generic point of $Z$. Pulling back $\mathbb{U}$ to $\eta$ yields a local Galois representation: $$\rho_x:\Gal(K^{\sep}/K)\rightarrow \GL(V)$$
where $V$ is the space of Frobenius invariant-sections of $\mathbb{U}$ base changed to a suitable separable closure of the base. The representation $\rho_x$ is our main object of study.
\begin{defn}
We say a closed point of $x\in X$ is isoclinic if the Newton polygon of $\mathcal{M}^{\dagger}_x$ has one slope with a repeated value. 
\end{defn}
We now state our main result.
\begin{thm}\label{thm:mainthm}
Let $\mathcal{M}^{\dagger},Z, \overline{Z}, x,$ and $\rho_x$ be as above. Let $x$ be an interior point of $\overline{Z}(\F_q)$. Suppose the F-isocrystal $\mathcal{M}^{\dagger}_x$ of $x$ is isoclinic, then the image of the inertia is finite index in the image of $\rho_x$. 

\end{thm}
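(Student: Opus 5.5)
Since $\rho_x$ is a representation of $G_K=\Gal(K^{\sep}/K)$, and $G_K/I_K\cong\Gal(\bar k/k)\cong\hat{\Z}$ is topologically generated by a Frobenius lift, the quotient $\rho_x(G_K)/\rho_x(I_K)$ is procyclic, generated by the image of any Frobenius lift $\sigma$. The claim is therefore equivalent to showing that some power $\rho_x(\sigma)^n$ lies in $\rho_x(I_K)$. The strategy, following the shape of Igusa's argument, is to compare the unramified part of $\rho_x$ with the Frobenius action at the special point $x$, and to use the isoclinic hypothesis to force that unramified part to be finite.

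\textbf{Step 1: Reduce to the geometric part of the monodromy.} Let $G$ be the Zariski closure of $\rho_x(G_K)$ in $\GL(V)$, and $G^{\mathrm{geo}}$ that of $\rho_x(I_K)$. Both $\rho_x(G_K)$ and $\rho_x(I_K)$ are $p$-adic Lie groups, so $\rho_x(I_K)$ is open in its own closure. It then suffices to prove two things: (a) $\rho_x(I_K)$ is open in $\rho_x(G_K)$ up to a torus part, and (b) the image of $\sigma$ in the abelian quotient $\rho_x(G_K)/\rho_x(I_K)$ has finite order. The quotient is a compact procyclic $p$-adic Lie group, hence of the form (finite)$\times\Z_p^r$ with $r\le 1$. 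So the real content is to rule out a $\Z_p$-factor, i.e.\ an infinite-order unramified character through which $G_K$ acts.

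\textbf{Step 2: Identify unramified characters via determinants and slopes.} Any infinite unramified quotient yields, after passing to a suitable tensor construction (e.g.\ a determinant of a subquotient, or a character of $G/G^{\mathrm{geo}}$), a rank one unit-root F-isocrystal on the punctured disc that extends over the puncture. Its Frobenius eigenvalue at $x$ is a $p$-adic unit $\alpha$.

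Now use the hypotheses on $\mathcal{M}^\dagger$. The constructed rank one object sits inside a tensor construction of $\mathcal{M}^\dagger$ that extends over $x$, namely inside the overconvergent object on $\overline{Z}$. I expect this to require the interior point hypothesis, together with overconvergence and a Kedlaya-type extension or semistable reduction result. Consequently $\alpha$ is a product of Frobenius eigenvalues of $\mathcal{M}^\dagger_x$. By algebraicity and $p$-plainness, $\alpha$ is an algebraic number that is a unit at all $l\neq p$. Since $x$ is isoclinic of slope $s$, every eigenvalue has $p$-adic valuation $s$. Hence a product of degree zero in the appropriate weighting (a character of $G$ on which the slope vanishes, as it must be for it to be unit root) has valuation $0$ at $p$ as well. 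It is then a unit at every finite place, and by Frobenius semisimplicity and purity-type absolute value bounds (companions), also of absolute value one at archimedean places. By Kronecker's theorem $\alpha$ is a root of unity. So the unramified character has finite order, which is exactly what Step 1 needs.

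\textbf{Main obstacle.} The hard step is making rigorous the link between the unramified quotient of $\rho_x$ over the generic point $\eta$ and the Frobenius on the fiber at the degenerate point $x$. On $Z$ the unit-root sub-object $\mathbb{U}$ exists, but at $x$ the slope filtration jumps, so $\mathbb{U}$ does not extend. My first attempt would be to avoid extending $\mathbb{U}$ itself. Instead I would work with $\det\mathbb{U}$ and, more generally, characters of the monodromy group, realized inside $\wedge^r\mathcal{M}^\dagger$ twisted by the slope. I would then use Kedlaya's results on extending the slope filtration (its determinant) across codimension and on overconvergent monodromy being controlled by Frobenius at closed points, to show that the relevant rank one object is unramified at $x$ with Frobenius a product of eigenvalues of $\mathcal{M}^\dagger_x$. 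Semisimplicity is used so that the Frobenius at $x$ acts diagonalizably, which makes these products genuine eigenvalues.
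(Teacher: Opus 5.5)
\textbf{There is a genuine gap.} The step you flag as the main obstacle is the whole content of the theorem, and the route you sketch for it would fail.

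The object to control is the unit-root $W$ over $\mathcal{E}$ attached to a representation of $G/H$. A priori it lies in $\langle M\rangle$ only as a subquotient of some $T(M)$ built through $U$ and $U^{\vee}$, which are not overconvergent. $W$ itself is overconvergent by Tsuzuki's theorem, but a sub-object $A\subset T(M)$ surjecting onto $W$ need not be. So Kedlaya's full faithfulness of $\For^{\dagger}$ does not descend this presentation to $\mathcal{E}^{\dagger}$. Without $W^{\dagger}\in\langle M^{\dagger}\rangle$ you cannot extend $W$ inside $\langle\overline{M}\rangle$ over $\mathcal{E}_+$. Hence you cannot conclude that its Frobenius eigenvalue is a product $\prod_i\alpha_i^{d_i}$ of eigenvalues of $\mathcal{M}^{\dagger}_x$.

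The paper supplies exactly this link with a local version of D'Addezio's parabolicity theorem ($G=P$ via $\dagger$-hulls, \Cref{thm:parabolic}). This gives \Cref{cor:subquotientoverconverget}: constant overconvergent objects of $\langle M\rangle$ already lie in $\langle M^{\dagger}\rangle$. Only then do \Cref{prop:pal2}, \Cref{prop:pal} and \Cref{prop:extfullyfaithful} place $\overline{W}$ in $\langle\overline{M}\rangle$ with fibre $W_0\in\langle M_0\rangle$ (\Cref{prop:unitroot_goodredn}).

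Your proposed substitute, $\det\mathbb{U}$ inside $\wedge^r\mathcal{M}^{\dagger}$ together with an extension of the slope filtration, cannot work. The slope filtration does not extend across a point where the Newton polygon jumps. Moreover $\det\mathbb{U}$ is in general ramified: in Igusa's situation $\mathbb{U}$ has rank one and is precisely the non-overconvergent object.

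\textbf{Two smaller points.} First, reducing to rank-one characters of $G/G^{\mathrm{geo}}$ loses information. That quotient is monothetic commutative and may have a $\mathbb{G}_a$-factor invisible to characters. Also, an abstract $\Z_p$-quotient of $\rho_x(G_K)/\rho_x(I_K)$ is not a priori realised by any object of $\langle V\rangle$. Openness does not bridge this, since a compact $p$-adic Lie subgroup need not be open in the points of its Zariski closure. The paper instead compares $[G:H]$ with the index of the images (Lemma 2.3 of \cite{KLSS}) and works with a faithful representation $\mathbb{W}$ of $G/H$. Its Frobenius is semisimple once $W_0\in\langle M_0\rangle$ is known.

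Second, your insistence on archimedean bounds plus Kronecker's theorem is well founded. Being a unit at every finite place does not make $\alpha$ a root of unity (consider $1+\sqrt{2}$), and the paper's proof passes over this step. However, semisimplicity does not supply such bounds. One needs a purity (weight) hypothesis beyond (1)--(3); it is available in the Shimura variety setting through the $l$-adic companions.
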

\subsection{Application to Shimura varieties.}
For convenience, we follow the notation of \cite{Patrikis}. Let $(G,\mathbb{X})$ be a Shimura datum, $K_0$ a neat compact open subgroup of $G(\A_f)$ and $\Sh_{K_0}(G,\mathbb{X})$ the associated Shimura variety defined over the number field $E(G,\mathbb{X})$. As we vary the level $\mathbb{K}\subset K_0$ over compact open subgroups of $G(\A_f)$, we get a morphism of the associated Shimura varieties $\Sh_{\mathbb{K}}(G,\mathbb{X})\rightarrow \Sh_{K_0}(G,\mathbb{X})$ over $E(G,\mathbb{X})$ and we denote $\Sh:=\lim_\mathbb{K}\Sh_\mathbb{K}(G,\mathbb{X})$. We choose $s\in \Sh(\C)$ in the inverse system to give a compatible choice of base-points at each level. Let $\Sh_{\mathbb{K},s}$ be a geometrically connected component of $\Sh_\mathbb{K}(G,\mathbb{X})$ containing $s$ defined over an extension $E_{\mathbb{K},s}$ of $E(G,\mathbb{X})$. As in \cite{Patrikis}, we assume that $G$ is such that the center $Z_G(\Q)$ is a discrete subgroup of $G(\A_f)$. In this case, for any normal compact open subgroup $\mathbb{K}\subset K_0$ we get a Galois cover $\Sh_{\mathbb{K}}(G, \mathbb{X})\rightarrow \Sh_{K_0}(G, \mathbb{X})$. For each $l$, we get canonical $l$-adic local systems:
$$\rho_l:\pi_1(\Sh_{K_0,s},s)\rightarrow G(\Q_l)$$

When $(G,\mathbb{X})$ is of abelian-type, we know that there exist canonical integral models for $\Sh_K(G,\mathbb{X})$ by work of Kisin in \cite{intmod}. More recently, Bakker-Shankar-Tsimerman \cite{intmodelexceptional} construct canonical models for all Shimura varieties, including when $G$ is of exceptional type. Let $N$ be a suitable integer and $\mathscr{S}_{K_0,s}$ over $\o_{E_{K_0,s}}[1/N]$ be the canonical integral model of $\Sh_{K_0,s}$. Works of Kisin \cite{abeliantypemodp},\cite{abeliantypemodp} for abelian type Shimura varieties and recent works of Klevdal-Patrkis \cite{Patrikis-Klevdal}, Huryn-Kedlaya-Klevdal-Patrikis, \cite{Huryn-Patrikis-Klevdal-Kedlaya}, and Patrikis \cite{Patrikis},  for exceptional Shimura varieties show that $\rho_l$ extend to arithmetic local systems $\pi_1(\mathscr{S}_{K_0,s}[1/l])\rightarrow G(\Q_l)$ for all primes $l$. Moreover, at closed points $x$ of $\mathscr{S}_{K_0,s}$, the image the Frobenius $\rho_l(\Frob_x)$  in $G(\Q_l)$ is independent of $l$.

Let $x\in \mathscr{S}_{K_0,s}$ be a closed point with residue characteristic $p$. Our interest is when $l=p$. Let $\mathscr{S}_{k(v)}$ be the special fibre of $\mathscr{S}_{K_0,s}$ at some place $v \mid p$. In this case, there exists an overconvergent $G$-F-isocrystal  associated to the crystalline local system $\rho_p$ (see for instance, \cite{esnault-Groechnig_overconvergent_crystal} Theorem 1.2, and \cite{Huryn-Patrikis-Klevdal-Kedlaya} Theorem 1.1) that extends logarithmically to a suitable toroidal compactification of $\mathscr{S}_{k(v)}$ and has nilpotent residues along the boundary divisor.

Let $V$ be an irreducible representation of $G_{\Q_p}$ and $\mathcal{M}^{\dagger}$ be the associated overconvergent F-isocrystal on $\mathscr{S}$. 
 Let $\mathcal{M}^{\dagger}_x$ be its pullback to $x$.   Then the above results on compatibility of the linearized Frobenius on $\mathcal{M}^{\dagger}_x$ with the canonical $l$-adic local systems ($l\neq p$), 
 allow us to apply \Cref{thm:mainthm} to $\mathcal{M}^{\dagger}$ on $\mathscr{S}_{k(v)}$. Let $Z$ be a Newton stratum as defined in \Cref{mainresult} such that its closure $\overline{Z}$ in $\mathscr{S}_{k(v)}$ intersects the basic stratum. Write $\eta$ for the generic point of $\overline{Z}$ and let $\mathcal{M}^{\dagger}_{\eta}$ be the pullback of $\mathcal{M}^{\dagger}$ to $\eta$. Let $\mathbb{U}$ be any unit root sub-object of $\mathcal{M}^{\dagger}_{\eta}$. For a closed point $x\in \overline{Z}(\F_q)$, we write $\rho_x$ to be the Galois representation associated to $\mathbb{U}$ as in \Cref{mainresult}.
\begin{assume}[Frobenius semisimplicity]\label{assumtionfrobss}
Let $(G,\mathbb{X})$ be a Shimura datum as above. It is expected that the action of the linearised Frobenius on $\mathcal{M}^{\dagger}_x$ is semisimple for all closed points $x$ of $\mathscr{S}_{K_0,s}$. For the following application, we assume that this property holds.
\end{assume}

\begin{cor}\label{cor:goodredSV} Let $(G,\mathbb{X})$ be a Shimura datum such that $Z_G(\Q)$ is a discrete subgroup of $G(\A_f)$.  Let $K_0$ be a neat compact subgroup of $G(\A_f)$. Let $s\in \Sh(\C)$, and $\mathscr{S}_{K_0,s}$ over $\o_{E_{K_0,s}}[1/N]$ be as above. Fix a prime $p\nmid N$. Let $\mathscr{S}_{k(v)}$ be the special fiber of $\mathscr{S}_{K_0,s}$ at some place $v\mid p$. Assume that the basic Newton stratum of $\mathscr{S}_{k(v)}$ is non-empty. Let $V$ be an irreducible representation of $G_{\Q_p}$ and let $\mathcal{M}^{\dagger}$ be the associated F-isocrystal on $\mathscr{S}.$ Let  $Z, \overline{Z}, x, \rho_x$ be as above. Let $x$ be a basic point of $\overline{Z}(\F_q)$. Then assuming \ref{assumtionfrobss} holds, the image of inertia is finite index in the image of $\rho_x$.
\end{cor}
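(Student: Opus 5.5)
The plan is to deduce the corollary from \Cref{thm:mainthm}, applied to $X=\mathscr{S}_{k(v)}$ and the overconvergent F-isocrystal $\mathcal{M}^{\dagger}$ attached to $V$. The work lies in checking the hypotheses (1)--(3) of \Cref{mainresult} and showing that a basic point $x$ of $\overline{Z}(\F_q)$ is an isoclinic interior point in the sense of \Cref{thm:mainthm}.

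\textbf{Algebraicity and $p$-plainness.} For a closed point $x$, the results cited above (Kisin for abelian type; Klevdal--Patrikis, Huryn--Kedlaya--Klevdal--Patrikis and Patrikis for the exceptional case) say that the conjugacy class of $\rho_l(\Frob_x)$ in $G$ is independent of $l$, and that it matches the linearised crystalline Frobenius on the fibre of the $G$-F-isocrystal at $x$. Composing with the representation $V$ of $G_{\Q_p}$, the characteristic polynomial of Frobenius on $\mathcal{M}^{\dagger}_x$ equals that of $V(\rho_l(\Frob_x))$ for every $l\neq p$. By this independence of $l$, its coefficients lie in a number field, which gives (1). Since $\rho_l$ has image in a compact subgroup of $G(\Q_l)$, the eigenvalues of $V(\rho_l(\Frob_x))$ are $l$-adic units for every $l\neq p$, which gives (2). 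Condition (3) is exactly \Cref{assumtionfrobss}.

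\textbf{Basic implies isoclinic.} Next I would show that a basic point $x$ has isoclinic Newton polygon for $\mathcal{M}^{\dagger}_x$. Basic means the Newton cocharacter $\nu_x$ of the $G$-isocrystal is central in $G$. Because $V$ is irreducible, a central cocharacter acts on $V$ through a single character by Schur's lemma. The slope decomposition of $\mathcal{M}^{\dagger}_x$ is the weight decomposition of $V$ under $\nu_x$, so all slopes coincide and $x$ is isoclinic.

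\textbf{Interior point and Galois representation.} Finally, $x$ must be an interior point of $\overline{Z}(\F_q)$ admitting a map $\Spec\o_K\to\overline{Z}$ that sends the closed point to $x$ and the generic point to $\eta$. Since $\overline{Z}$ is the closure of $Z$ in the smooth variety $\mathscr{S}_{k(v)}$ and $x\in\overline{Z}$ specialises from $\eta$, such a curve exists by the standard argument: take a curve through $x$ meeting $Z$ and normalise it. With this, $\rho_x$ is the representation of \Cref{mainresult}, and \Cref{thm:mainthm} gives that inertia has finite index in the image.

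The step I expect to be the main obstacle is the compatibility in the first step. The precise statement needed is that the $G$-F-isocrystal's Frobenius at $x$ is $G(\overline{\Q}_p)$-conjugate (after linearisation) to $\rho_l(\Frob_x)$ for all $l$, so that pushing forward along $V$ gives compatible characteristic polynomials. This is what is available in the cited works, and I would invoke it directly. A related subtlety is whether the interior-point hypothesis of \Cref{thm:mainthm} is automatic for basic points in $\overline{Z}$. If it is not, I would argue that the basic locus is closed and contained in the closure of every Newton stratum it meets, so that $x$ lies in the interior in the required sense.
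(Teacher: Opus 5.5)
Your proposal is correct and follows the paper's proof: algebraicity and $p$-plainness from the cited $l$-independence results, Frobenius semisimplicity from Assumption~\ref{assumtionfrobss}, basic implies isoclinic, and then \Cref{thm:mainthm}. The paper cites Kottwitz for the isoclinic step where you use Schur's lemma; over $\Q_p$ Schur only gives a division algebra, so you should add that the basic Newton cocharacter is central and $\sigma$-stable, hence $\Q_p$-rational, and therefore acts on $V$ through a single character. Your final paragraph is unnecessary, since the arc $\Spec\o_K\to\overline{Z}$ is part of the given setup defining $\rho_x$ (and your curve-normalisation construction would only send the generic point to $\eta$ when $\dim Z=1$, though the proof of \Cref{thm:mainthm} does not actually need that).
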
 
We note that Frobenius semisimplicity is known in many cases. By \cite{abeliantypemodp} Theorem 0.4 it holds for all closed points when $(G,\mathbb{X})$ is of Hodge type. This, along with Kisin's description of (connected components of) integral models of adjoint abelian type Shimura varieties (\cite{intmod}, Theorem 3.4.10.) implies Frobenius semisimplicity for (all) abelian type Shimura varieties. Therefore, \Cref{cor:goodredSV} is true unconditionally in these cases. The property is also known to hold for $\mu$-ordinary points of all Shimura varieties by \cite{intmodelexceptional} Theorem 1.6. It is indeed expected to hold in general for exceptional Shimura varieties as well, by the forthcoming work of Madapusi and Lee \cite{frobssupcoming}. 
\begin{exmps}
We discuss three examples.
\begin{enumerate}

\item  Let $\mathscr{S}$ be the Siegel Shimura variety over $\F_p$. Take $Z$ to be the ordinary stratum and $x$ a supersingular point of $\mathscr{S}(\F_p)$. Then \Cref{cor:goodredSV} gives a direct generalisation of \Cref{thm:igusa} for the local monodromy representation associated to the $p$-power torsion points of ordinary abelian varieties over $K$ with supersingular reduction. We note that the result as stated, also applies to other Newton strata with $p$-rank greater than $0$.
\item Let $\mathscr{S}$ over $\F_p$ be the orthogonal Shimura variety associated to the  orthogonal group $\SO(n,2)$ with signature $(n,2)$. In particular, when $n\leq 19$, these are moduli spaces of K3 surfaces. We consider the F-isocrystal associated to the standard representation of $\SO(n,2)$. We can take $Z$ to be the ordinary stratum and the associated F-isocrystal of an ordinary $K$-point $\eta$ of $\mathscr{S}$ will have a Newton polygon, with slopes $-1,$ and $1$ of multiplicity $1$ and slope $0$ of multiplicity $n$. In the specific case of K3 surfaces, we are considering the first negative Tate twist of their second crystalline cohomology. The associated Galois representation coming from the unit-root sub-object is the representation:
$$\rho: \Gal(K^{\sep}/K)\rightarrow \SO_n(\Q_p)$$
When $\eta$ specialises to a supersingular point of $\mathscr{S}(\F_q)$, we get that image of inertia is finite index in the image of $\rho$. 

\item Let $G$ be an inner form of the simple, simply connected algebraic group over $\Q$ with Lie algebra of type $E_6$. Let $(G, \mathbb{X})$ be the Shimura datum where $\mathbb{X}$ is the Hermitian symmetric domain of type EIII. As $G$ is an inner form, the Galois action on the Dynkin Diagram is trivial and thus, the conjugacy class of the associated cocharacter $\mu$ is invariant under $\Gal(\overline{\Q}/\Q)$. Therefore, the reflex field of the associated Shimura variety is $\Q$. We choose a large enough prime $p$ so that $G$ is split at $p$ and there exists a good integral model over $\Z_p$ of the exceptional Shimura variety in the sense of \cite{intmodelexceptional} Theorem 1.3. Write $\mathscr{S}$ for its reduction mod $p$. Let $V$ be the 27-dimensional faithful representation of $G$ over $\Q$ corresponding to node 1 of the Dynkin diagram. Let $\mathcal{M}^{\dagger}$ be the associated F-isocrystal on $\mathscr{S}$ associated to $V\otimes \Q_p$. Then the grading induced from the central $\bbG_m$-factor of Levi subgroup associated to $\mu$ gives a weight decomposition on $V\otimes {\overline\Q_p}= V_{-1}\oplus V_0\oplus V_1$ with multiplicity $16, 10$ and $1$ respectively (see \cite{e6eg}, Section 8).  For the generic point $\eta$ of the $\mu$-ordinary stratum, the Newton co-character equals Galois average of the the Hodge co-character over the splitting field of a maximal torus of $G_{\Q_p}$. As $G$ is assumed to be split at the prime $p$, both the co-characters are the same. Hence, we consider the $10$-dimensional Galois representation associated to the unit root F-isocrystal of the slope $0$ part of the Newton polygon. Assuming \ref{assumtionfrobss} holds, when $\eta$ specialises to a point in the basic stratum, we get the conclusion of \Cref{thm:igusa} for the associated Galois representation.
\end{enumerate}
\end{exmps}
\subsection{Abelian varieties with semi-stable reduction.} For Siegel Shimura varieties, we also prove an analog of Igusa's result when $x$ is a point of the boundary of its compactification. We introduce the notation to precisely state the corollary. 

\begin{defn}\label{bbcompdefn}
Let $\mathcal{A}_g$ over $\F_p$ denote the Siegel Shimura variety viewed as the moduli space of abelian varieties in characteristic $p$. Let   $A\in \mathcal{A}_g(K)$ be abelian variety with semi-stable reduction. Then $A$ reduces to the boundary of its Baily Borel compactification. 
As the Baily-Borel boundary components of Siegel Shimura varieties are lower dimensional Siegel Shimura varieties, the reduction will be specified based on the Newton stratum $A$ reduces to. More precisely, if $A$ reduces to the zero dimensional boundary then $A$ then its reduction over $k$ is a torus. Otherwise $A$ has \textit{semi-abelian reduction}. In this case,  if $A$ reduces to the supersingular Newton stratum of the boundary we say that $A$ has \textit{semi-supersingular reduction.}
\end{defn}
The following corollary is a direct analog of \Cref{thm:igusa} for abelian varieties with semi-stable reduction.
\begin{thm}\label{thm:badredabvar}
Let $A$ be an ordinary abelian variety over $K$ with semi-supersingular reduction. Write $\rho_A$ for the Galois representation associated to the $p$-power torsion points of $A(\overline{K})$. Then the image of inertia has finite index in the image of $\rho_A$.
\end{thm}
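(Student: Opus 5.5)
The plan is to reduce \Cref{thm:badredabvar} to the good-reduction setting of \Cref{thm:mainthm} via Raynaud's uniformisation. Since $A$ has semi-stable reduction, the connected component of the Néron model special fibre is an extension of an abelian variety $B_0/k$ by a torus $T_0$. Mumford--Raynaud (Faltings--Chai) gives a rigid-analytic uniformisation $A^{\mathrm{an}} = G^{\mathrm{an}}/Y$. Here $G$ is a semi-abelian scheme over $\o_K$, an extension $0\to T\to G\to B\to 0$, with $B$ an abelian scheme over $\o_K$ lifting $B_0$, $T$ a split torus (after a finite unramified extension of $K$, which is harmless for finite-index statements), and $Y\cong\Z^r$ a lattice of periods. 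On $p$-power torsion this yields a Galois-equivariant filtration
$$0\to T[p^\infty]\to G[p^\infty]\to A[p^\infty]\to Y\otimes \Q_p/\Z_p\to 0 ,$$
with $G[p^\infty]$ an extension of $B[p^\infty]$ by $T[p^\infty]$. Semi-supersingular reduction means $B_0$ is supersingular. Ordinarity of $A$ then forces the generic fibre $B_K$ to be ordinary, because the $p$-rank of $A$ equals $r$ plus the $p$-rank of $B_K$, so $B_K$ has $p$-rank $\dim B$.

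Next, consider the étale quotient $A[p^\infty]^{\mathrm{et}}$, whose Tate module carries $\rho_A$. The statement concerns $p$-power torsion points of $A(\overline{K})$ in characteristic $p$, so $\rho_A$ is the representation on $T_pA(\overline K)=T_p(A[p^\infty]^{\mathrm{et}})$. By the filtration above, this module is an extension of $Y\otimes\Z_p$ by $T_p(B_K[p^\infty]^{\mathrm{et}})$: the torus $T[p^\infty]$ is multiplicative and contributes no $\overline K$-points. Galois acts on $Y$ trivially after the unramified extension. Hence $\rho_A$ is block upper triangular, with diagonal blocks $\rho_B$ (the étale representation of the ordinary abelian variety $B_K$ with supersingular reduction $B_0$) and the trivial representation. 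The off-diagonal block is a $1$-cocycle with values in $\Hom(Y\otimes\Z_p, T_pB^{\mathrm{et}})$, determined by the images of the periods in $B(K)$ under the Kummer map.

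Apply \Cref{thm:mainthm}, or more precisely \Cref{cor:goodredSV} for $\mathcal{A}_{g'}$ with $g'=\dim B$, to $B$. The point $x=B_0$ is supersingular, hence basic and isoclinic, and $\eta$ maps to the ordinary stratum. This gives that $\rho_B(I_K)$ has finite index in $\rho_B(G_K)$. To conclude for $\rho_A$, compare $H=\rho_A(G_K)$ with $H_I=\rho_A(I_K)$. Since $\rho_A(G_K)/\rho_A(I_K)$ is a quotient of $\hat\Z$ generated by Frobenius, it suffices to show that some power of a Frobenius lift lies in $\rho_A(I_K)$ modulo the unipotent part. Concretely, pick $\sigma\in G_K$ lifting Frobenius. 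Some power $\sigma^m$ has $\rho_B(\sigma^m)=\rho_B(\tau)$ for some $\tau\in I_K$. Then $\rho_A(\sigma^m\tau^{-1})$ lies in the unipotent radical $U\cong \Hom(Y, T_pB^{\mathrm{et}})\cong\Z_p^{N}$.

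The main obstacle is controlling this unipotent part, i.e.\ showing that $H\cap U$ and $H_I\cap U$ are commensurable. I would first try to show that $H_I\cap U$ is open in $U$, using the fact that the Kummer extensions $K(p^{-n}y)$ for $y\in$ the periods are already highly ramified. In characteristic $p$, the $p$-division points on the ordinary part arise from Artin--Schreier--Witt type equations, which are totally ramified unless the point is in the image of Frobenius-trivial classes. Failing that, I would use the elementary fact that any closed subgroup of $\Z_p^N$ containing $H_I\cap U$ with $H/H_I$ procyclic gives $[H\cap U : H_I\cap U]$ finite whenever the extra generator acts through a finite-index situation. Combining the two gives finite index of $\rho_A(I_K)$ in $\rho_A(G_K)$.
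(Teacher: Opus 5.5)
\textbf{There is a genuine gap at the step you flag yourself.} Most of your set-up is correct:
\begin{enumerate}
\item the filtration of $A[p^\infty]$ coming from the Raynaud extension;
\item the deduction that $B_K$ is ordinary;
\item the description of $\rho_A$ as an extension of the trivial representation on $Y\otimes\Z_p$ by $\rho_B$;
\item the use of Corollary~\ref{cor:goodredSV} for $B$.
\end{enumerate}
Together these reduce the theorem to showing $[H\cap U:H_I\cap U]<\infty$, where $H=\rho_A(G_K)$ and $H_I=\rho_A(I_K)$. That last step is the whole content of the semi-abelian case, and neither of your suggestions proves it.

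Your first aim, that $H_I\cap U$ is open in $U$, is false in general. For $A=B\times E$ with $E$ a Tate curve, the periods map to $0$ in $B(K)$, the extension cocycle vanishes, and $H\cap U=H_I\cap U=0$. The correct target is openness of $H_I\cap U$ in $H\cap U$.

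Your fallback is not an argument. The group $(H\cap U)/(H_I\cap U)$ embeds in the procyclic group $H/H_I$ and is pro-$p$, so it is either finite or isomorphic to $\Z_p$. Nothing group-theoretic excludes $\Z_p$. For instance, $g\mapsto\bigl(\begin{smallmatrix}1&\chi(g)\\0&1\end{smallmatrix}\bigr)$ with $\chi\colon G_K\to G_K/I_K\to\Z_p$ a nontrivial unramified character has $H\cap U\cong\Z_p$ and $H_I\cap U=0$. The appeal to Artin--Schreier--Witt ramification of $K(p^{-n}y)$ does not exclude this either: the question is precisely whether the extension class has an unramified component of infinite order.

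\textbf{The missing idea} is an arithmetic input that rules out an infinite unramified unipotent quotient, i.e.\ a $\mathbb{G}_a$ in $G/H$ for the Zariski closures. The paper does not split off $B$ at all:
\begin{enumerate}
\item It notes that the unit-root part lies in $\mathbb{D}(\Lambda_K,B,\bar f)$, because its map to $\mathbb{D}(T)$, which has slope one, vanishes.
\item It applies Proposition~\ref{prop:unitroot_goodredn} to the log F-isocrystal $\mathbb{D}_{\log}(\Lambda,\mathcal{B},\bar f)$ of the whole 1-motive $[\Lambda\to\mathcal{B}]$. A faithful representation of $G/H$ then yields $\overline{W}$ over $\mathcal{E}_+$ whose fibre at $t=0$ lies in $\langle W_0\rangle$.
\item It uses Lemma~\ref{lem:smalllem2}: the linearised Frobenius on $W_0$ is $\Id\oplus F_{B_0}$. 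This is semisimple because the eigenvalues of $F_{B_0}$ have slope $1/2$ and so are $\neq 1$.
\end{enumerate}
Semisimplicity, combined with the eigenvalue computation in the proof of Theorem~\ref{proofmainthm}, forces Frobenius on the fibre of $\overline{W}$ to have finite order. That is exactly what kills the unipotent part your block-matrix argument leaves open.

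If you want to stay in your framework, there is a partial repair. View $u=\rho_A(\sigma^m\tau^{-1})$ as an element of $U=\Hom(Y,T_pB^{\mathrm{et}})$. Since $u$ normalises $H_I$, commutators give $(1-\bar h)\bar h'u\in H_I\cap U$ for all $\bar h,\bar h'\in\rho_B(I_K)$. This yields $u^{p^a}\in H_I$ for some $a$, provided $T_pB^{\mathrm{et}}\otimes\Q_p$ has no trivial Jordan--H\"older constituent under $\rho_B(I_K)$. That hypothesis itself still needs an F-isocrystal argument using that $B_0$ is isoclinic of slope $1/2$, and your proposal does not supply it.
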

A direct consequence of our results is the following finiteness result of reduction of Hecke orbits.
\begin{cor}
 Let $A$ be an ordinary abelian variety over $K$ with supersingular reduction (resp. semi-supsersingular reduction). Then the reduction of its Hecke orbit (resp. $p$-power Hecke orbit) is finite.
\end{cor}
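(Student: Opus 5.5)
The finiteness of the reduction of the Hecke orbit is a direct consequence of \Cref{cor:goodredSV}, applied to the Siegel case, and \Cref{thm:badredabvar}. I will treat the supersingular case first; the semi-supersingular case is the same argument with \Cref{thm:badredabvar} in place of \Cref{cor:goodredSV}.

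The first step is to reduce to $p$-power isogenies. Let $B$ lie in the Hecke orbit of $A$, so $B$ is isogenous to $A$ over $K$. Write the isogeny as a composite of a prime-to-$p$ isogeny and a $p$-power isogeny. The prime-to-$p$ part is harmless. Since $A$ has good reduction, the $l$-adic Tate modules are unramified by N\'eron--Ogg--Shafarevich. Hence a prime-to-$p$ isogeny extends to an isogeny of N\'eron models, and its effect on reductions moves within the prime-to-$p$ Hecke orbit of $\bar A$ over $k$. Here, however, I should interpret ``reduction of the Hecke orbit'' as the set of isomorphism classes of reductions of $K$-rational members. The prime-to-$p$ Hecke orbit of a supersingular point over a fixed finite field $\F_q$ is finite, because there are finitely many points of $\mathcal{A}_g$ with level structure over $\F_q$; so finiteness on that side is clear once the level is fixed. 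For the $p$-power part, a $K$-rational $p$-power isogeny $A\to B$ has kernel a finite $K$-rational subgroup scheme $H\subset A[p^n]$. Its reduction depends only on the specialization of $H$, that is, on its closure in the N\'eron model.

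The second, key step is to bound the $p$-power isogenies using the Galois action. Since $A$ is ordinary, there is an exact sequence $0\to A[p^\infty]^{\circ}\to A[p^\infty]\to A[p^\infty]^{et}\to 0$. The connected part is multiplicative, and the étale quotient carries $\rho_A$ on $T=T_pA^{et}\cong\Z_p^g$. The subgroup $H$ is determined by its connected part together with its étale image, which is a $\Gal(K^{\sep}/K)$-stable finite subgroup of $A[p^\infty]^{et}$. Equivalently, this is a $\Gal$-stable lattice $T\subset L\subset T\otimes\Q_p$. By \Cref{cor:goodredSV} in the Siegel case, the image of inertia $\rho_A(I_K)$ has finite index in $\rho_A(G_K)$. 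The key claim I then need is that, after replacing $K$ by a finite extension (which only enlarges the orbit), the étale image of any $K$-rational kernel is forced to be small. I would argue this as follows. On passing to $\mathcal{O}_K$, multiplying by the inertia-invariant structure shows that the reduction of $B$ depends only on $H$ modulo the part killed by the Frobenius-type degeneration at the supersingular point. Concretely, over the closed point the whole $A[p^\infty]$ becomes connected, so the étale part of $H$ specializes into the connected supersingular $p$-divisible group. Large inertia then forces stable lattices to be of the form $p^{-m}T$ up to bounded index, since the $\Z_p$-span of $\rho(I_K)$ is open in an order acting irreducibly up to finite index. Because $p^{-m}T$ corresponds to multiplication by $p^m$, which does not change the isomorphism class of the reduction, only finitely many reductions arise.

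The main obstacle is the step showing that large inertia image forces the Galois-stable lattices to be homotheties up to bounded index. Finite index of inertia in the image alone does not give irreducibility. I would therefore combine it with the Zariski density or openness of the image of $\rho_A$ for ordinary $A$ near supersingular points, so that the image is open in $\GL_g(\Z_p)$. For an open subgroup of $\GL_g(\Z_p)$, the stable lattices are, up to homothety, finite in number. The semi-supersingular case runs identically, with \Cref{thm:badredabvar} and Mumford's uniformization replacing good reduction.
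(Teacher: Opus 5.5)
There is a genuine gap, and it starts with how you read the statement. You restrict to $K$-rational members of the Hecke orbit. Under that reading the corollary is trivial and needs no monodromy input: any $B$ isogenous to $A$ over $K$ again has good (resp.\ semi-stable) reduction, so its reduction is an abelian (resp.\ semi-abelian) variety of dimension $g$ over the fixed finite field $k$, and there are only finitely many of those. The statement, as in \cite{KLSS}, Corollary 2.10, concerns the Hecke orbit over $\overline{K}$. Its members are defined over arbitrary finite extensions of $K$, and their reductions are a priori $\overline{\F}_p$-points of the supersingular locus, which is infinite for $g\geq 2$. Counting $G_K$-stable lattices cannot control this. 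Geometrically, all lattices in $T\otimes\Q_p$, infinitely many up to homothety, give members of the orbit.

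Even on your own terms, the key step fails. Finite index of $\rho_A(I_K)$ in $\rho_A(G_K)$ says nothing about irreducibility. The openness of $\rho_A(G_K)$ in $\GL_g(\Z_p)$ that you want to invoke is neither proved in the paper nor true in general. For $A=E\times E$ with $E$ ordinary with supersingular reduction, $G_K$ acts on $T_pA^{et}$ through scalars, so every lattice is stable. (Also, a finite subgroup $H$ is not determined by its connected part and its \'etale image.)

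The missing idea is the paper's. Finite index means that after a finite \emph{unramified} extension $K'/K$ one has $\rho_A(G_{K'})=\rho_A(I_{K'})$, i.e.\ $K'(A[p^\infty])/K'$ is totally ramified. Hence every $p$-power isogeny of $A_{\overline{K}}$ is defined over an extension of $K'$ with the same residue field $k'$. So all reductions in the $p$-power Hecke orbit are (semi-)abelian varieties over the single finite field $k'$, and finiteness follows.

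For the prime-to-$p$ part in the good reduction case, the $\ell$-adic representations are unramified. After enlarging $k$, the Frobenius of the supersingular reduction acts on $T_\ell$ by the scalar $q^{1/2}$. Then every finite prime-to-$p$ subgroup is Galois-stable, so those isogenies are also defined over $K'$. The point is to bound the field of definition of the reductions, not to bound the number of isogenies.
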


\subsection{Previous work and our methods.}
In characteristic $p$, the first result in this direction is due to Igusa mentioned in Theorem \ref{thm:igusa} and subsequent result of Chai \cite{chaimonodromy} proves analogous local monodromy results for one-dimensional $p$-divisible groups. Chai's results study precisely the ramification and the upper breaks of the abelian field extension obtained from the $p$-power torsion
points of its \'etale quotient. In higher dimensions, building on Igusa's result, the author studied the local monodromy of ordinary abelian surface and K3 surfaces with bad reduction (See \cite{myownpaper}). More generally, we work in the set up of orthogonal Shimura varieties  and exploit the geometry of $\GSpin$ Shimura varieties to prove such results in characteristic $p$. Our previous paper crucially uses Igusa's result and the explicit geometry of Shimura varieties at hand, which limit our methods to the orthogonal case. Naturally, we next sought to generalize Igusa's result.

Over mixed characteristic,  \Cref{thm:igusa} is known more generally for abelian varieties due to the work of Kisin-Lam-Shankar-Srinivasan in \cite{KLSS}. The main ingredient of their proof is the $D_{\cris}$ functor in mixed characteristic from $p$-adic Hodge theory to study the filtered isocrystal associated to the $p$-adic Galois representation of an abelian variety with supersingular reduction. The $D_{\cris}$-functor acts as a convenient bridge between the Galois representation that we see generically and the fibre of the abelian variety mod $p$, that is supersingular. Furthermore, the authors use their theorem to prove a finiteness result for the reduction of the Hecke orbit of abelian varieties in such a set up. The argument in \cite{KLSS} over $\Q_p$ served as an inspiration for our methods to prove Igusa's result in higher dimensions in characteristic $p$. 

However, as the $D_{\cris}$ functor used in \cite{KLSS} is only a phenomenon in the mixed characteristic, we turn to the theory of F-isocrystals. Along with the local theory of unit root and overconvergent F-isocrystals, our proof makes crucial use of monodromy groups attached to F-isocrystals. In the local set up, while the sub-category of unit root F-isocrystals is neutral Tannakian, we do not necessarily have that for the ambient category of F-isocrystals over $K$. However, our proof works with \textit{finitely generated} sub-categories of F-isocrystals over $K$ and they are ``small enough" to be neutral (see \cite{delignetannakian}, Corollary 6.20). Once we have \textit{local} monodromy groups, the arguments  in \cite{marco_ambrosi} and \cite{marcoparabolicity} for \textit{global} monodromy groups can be adapted to understand the monodromy of the unit root sub-F-isocrystal while keeping track of the fibre over the closed point of its ambient overconvergent F-isocrystal, much in spirit to the role of the $D_{\cris}$-functor in the argument over $\Q_p$. We note that our proof does not use any moduli interpretation of Shimura varieties at hand which the author exploited previously, but rather, the abstract local theory of F-isocrystals. Hence, we write our result purely in the language of F-isocrystals even though finiteness is  truly a consequence of their properties that `come from geometry'. 

To study the local monodromy of abelian varieties with semi-stable reduction, we utilize the study of their $p$-adic uniformization as in our previous paper \cite{myownpaper}. In contrast to our previous explicit method utilizing the geometry of $\GSpin$ Shimura varieties, we use the theory of the associated log F-isocrystal and its filtrations to generalise the result in \cite{myownpaper} to Siegel Shimura varieties. Finally, following \cite{KLSS} Corollary 2.10, we prove a finiteness result for the Hecke orbit of abelian varieties over a local field of equicharacteristic as a direct application of our results. 
\subsection{Acknowledgments}
I thank Ananth Shankar for suggesting possible generalisations of Igusa's result that led to my thesis and this paper. I am grateful to Marco D'Addezio, Jake Huryn, Ruofan Jiang, Kiran Kedlaya, Alice Lin, Stefan Patrikis, and Ananth Shankar for helpful conversations and/or email correspondences answering my questions. Particular thanks to Kiran Kedlaya for pointing me in the right direction with the proof of \Cref{prop:extfullyfaithful}. I am especially thankful to Stefan Patrikis for an insightful remark (Remark \ref{rem:decent}), that helped me write the proof in a more general set-up than in an earlier draft of this paper, and for very helpful conversations on Tannakian categories.  I acknowledge the conference ``Local Systems in Algebraic Geometry'' held at The Ohio State University in May 2024, which also served as an inspiration to generalise my previous results. I am supported by NSF MSPRF 2503371.
\section{Background} 
\subsection{Local theory of F-isocrystals.}
In this section, we fix notations and definitions, along with some helpful results. We follow \cite{ambrus_p_adic_monodromy} Section 2 and \cite{kedlaya_notes}. Let $\mathcal{O}:= W(k)$ be the ring of Witt vectors over $k$ and $L^{\ur}:= W(k)(1/p)$.  We will need to work with the following rings.
\begin{defn}\label{basicrings}
Let $u$ be a parameter reducing to $t$ mod $p$. 
\begin{enumerate}
\item Let $\Gamma_+$ denote the power series ring that we view as functions over the formal open disc of radius one:
 $$\Gamma_+ = \bigg\{\sum_{i\in \N} x_iu^i\mid x_i\in \mathcal{O}\bigg\}$$
    \item We denote $\Gamma$ to be the $p$ adic completion of $\Gamma_+(1/t)$ given by the sub-ring of bidirectional power series:
$$\Gamma = \bigg\{\sum_{i=-\infty}^\infty x_iu^i\mid x_i\in \mathcal{O}; v_p(x_i)\to-\infty\bigg\}$$

 \item We let $\Gamma^{\dagger}$ to be the Laurant series that converge in some annuli of the form $*\leq|u|<1$, explictly given by
  $$\Gamma^{\dagger} = \bigg\{\sum_{i\in \Z} x_iu^i\mid x_i\in \mathcal{O}; \liminf_{i\to-\infty}\frac{v_p(x_i)}{-i}>0\bigg\}$$
 \end{enumerate}
 For the purpose of this paper, it will be sufficient to invert $p$ and work with $\mathcal{E}:=\Gamma(p^{-1})$, $\mathcal{E}_+:=\Gamma_+(p^{-1})$, and $\mathcal{E}^{\dagger}:=\Gamma^{\dagger}(p^{-1})$. 
 We note that $\Gamma$ and $\Gamma^\dagger$ are discrete valuation rings with residue field $K$ and hence $\mathcal{E}$ and $\mathcal{E}^{\dagger}$ are their fraction fields. The ring $\mathcal{E}_+$ is a discrete valuation ring with uniformizer $u$.
\end{defn}

\begin{defn}
 As is often the case with local results in this area, we will need to work with the Robba ring $\mathcal{R}$, consisting of the union of the rings of rigid analytic functions over $L^{\ur}$ on the annuli of the form $*\leq|u|<1.$ For an explicit expression in terms of power series, see \cite{ambrus_p_adic_monodromy} Definition 2.2.  We note that $\Gamma^\dagger$ is the subring of $\mathcal{R}$ consisting of Laurant series with coefficients in $W(k)$ and hence $\mathcal{E}^{\dagger}\subset \mathcal{R}$. We further define the ``positive Robba ring" $\mathcal{R}_+$ containing $\mathcal{E}_+$ to be the ring $$\mathcal{\mathcal{R}}_+:=\mathcal{R}\cap \big\{\sum_{i\in \N}x_iu^i|x_i\in L^{\ur}\big\}$$
 We view $\mathcal{R}_+$ as the ring of rigid analytic functions on the open unit disc. 
\end{defn}
Let $R$ denote any of the rings $\mathcal{E}_+$, $\mathcal{E}$, $\mathcal{E}^{\dagger},$ $\mathcal{R}$ or $\mathcal{R}_+$ Each of these rings carry a Frobenius map $\sigma(u) = u^q$ and with a derivation given by $d:R\rightarrow \Omega^1_{R}$ where $\Omega^1_{R}$ is the free module over $R$ generated by $du$ such that $u\mapsto du$ and $dr=0$ for all $r\in R$. The action of the Frobenius and the derivation are compatible with each other. Furthermore, we have the map $d\sigma:\Omega^1_R\rightarrow \Omega^1_R$ such that $d\sigma(\sum_ix_iu^i)=\sum_i\sigma(x_i)qu^{iq+q-1}du$. 
\vspace{2mm} 

 We follow the terminology of \cite{kedlaya_notes} Remark 2.11 and define $\textbf{F-Isoc}(\mathcal{O}_K)$, $\textbf{F-Isoc}(K)$, ${\textbf{F-Isoc}^{\dagger}(K)}$, $\textbf{F-Isoc}^{\ddagger}(K)$, and $\textbf{F-Isoc}^{\ddagger}(\o_K)$ the categories of F-isocrystals over the rings $\mathcal{E}_{+}$, $\mathcal{E}$, $\mathcal{E}^{\dagger}$, $\mathcal{R}$ and $\mathcal{R}_+$ respectively as follows:

\begin{defn}\label{defns}
 Let $R$ denote any of the above rings. An F-isocrystal over $R$ is a triple $(M,F,\nabla)$ such that: 

\begin{enumerate}
\item The module $M$ is finite projective over $R$ with an additive and $\sigma$-linear map $F:M\rightarrow M$ that is an isomorphism.

\item A connection $\nabla: M\rightarrow M\otimes \Omega^1_R$ on $M$ that is compatible with the action of the Frobenius $F$, that is, $F\cdot \nabla=\nabla\cdot (F\otimes d\sigma)$. Once we choose a bases of $M$, the connection $\nabla= d+N dt$ where $N$ is a matrix with entries in $R$ which is determined by the action of $\nabla$ on the bases of $M$.

\end{enumerate}
We define the morphisms between two objects in this category given by $R$-linear maps that are compatible with the action of Frobenius and the connection and make the appropriate diagram commute.
\end{defn}
For convenience, we may sometimes drop the notation of the triple $(M,F,\nabla)$ and only write $M$ with the Frobenius and the connection being implicit from context.  

We will keep a running example of abelian varieties for context and to explain our interest in these definitions. 
\begin{exmp}
Let $S=\Spec K$ (resp. $\Spec \o_K$). Let $G[p^{\infty}]$ denote a $p$-divisible group over $\mathcal{O}_K$ (resp. $K$). Then using the crystalline Dieudonn\'e functor $\mathbb{D}(\cdot)$ (see \cite{dejong2} 2.3.3 and 2.3.4), we can associate a Dieudonn\'e crystal (as defined in \cite{dejong2} 2.3.2.) to $G[p^{\infty}]$. These are equivalent to the category of Dieudonn\'e modules over $R=\mathcal{E}$ (resp. $\mathcal{E}_+$) defined in \cite{dejong2} 2.3.4. As $1/p\in R$, using \cite{ambrus_p_adic_monodromy} Lemma 2.17, we further drop the Verschiebung and associate an F-isocrystal over $\mathcal{E}_+$ (resp. over $\mathcal{E}$). In particular, we can associate an F-isocrystal over $\mathcal{E}$ to an abelian variety with semi-stable reduction. When $A$ has good reduction we have that $\mathbb{D}(A[p^{\infty}])=\mathbb{D}(\mathscr{A}[p^{\infty}])\otimes_{\mathcal{E}_+}\mathcal{E}$ where $\mathscr{A}$ denotes the N\'eron model of $A$ over $\mathcal{O}_K$.
\end{exmp}

\begin{defn}
Let $M$ be an F-isocrystal in $\textbf{F-Isoc}(K)$. We say $M$ is \textit{overconvergent} if there exists an F-isocrystal $M^{\dagger}\in {\textbf{F-Isoc}^{\dagger}(K)}$ such that
$M = M^{\dagger}\otimes_{\mathcal{E}^{\dagger} }\mathcal{E}$.\end{defn}

\begin{exmp}\label{abvar_overconv_isoc}
We note the following convenient fact from \cite{de_jong} Section 2.5, and \cite{fabien_overconvergent} Remark 3.17: Let $A/K$ be an abelian variety with semi-stable reduction. Then the associated F-isocyrstal $\mathbb{D}(A[p^{\infty}])$ over $\mathcal{E}$ is overconvergent.
\end{exmp}
\begin{defn}
We will say an F-isocrystal $M$ over $\mathcal{E}$ (resp. $\mathcal{E}^{\dagger}$) is constant if there exists an F-isocrystal $M_0$ over $L$ such that $M=M_0\otimes_L\mathcal{E}$ (resp. $M=M_0\otimes_L\mathcal{E}^{\dagger}$).
\end{defn}
\subsection{Slope filteration and unit root F-isocrystals} \label{slopefilunitroot} We refer the reader to \cite{kedlaya_notes}, Section $3$ for a discussion on slopes and Newton polygons. Every F-isocrystal $M$ in $\textbf{F-Isoc}(K)$ aquires a slope filtration:
$$0=M_0\subseteq M_1\subseteq M_1\subseteq \dots \subseteq M_r=M$$ such that $M_i/M_{i-1}$ is isoclinic of slope $s_i$ and $s_1<s_2<\dots <s_r$. This determines the Newton polygon of $M$. We can also define the Newton polygon of $M$ by pulling back $M$ to $K^{\perf}$, the perfection of $K$ and then using the Dieudonn\'e-Manin classification over the algebraic closure of $K^{\perf}$. 
\begin{defn}
We say an F-isocrystal $M$ over $\mathcal{E}$ or $\mathcal{E}^{\dagger}$ is \textit{unit-root} if all the slopes of its Newton poylogon are zero. We will use the same terminology for Diedonn\'e modules over $W(k)$. 
\end{defn}

Unit root F-isocrystals play the role of \'etale local systems and hence provide us with a bridge between Galois representations coming from abelian varieties and the theory of F-isocrystals. We will need the following result due to Tsuzuki, see \cite{tsuzuki_unit_root} Theorem 4.2.6.
\begin{thm}\label{thm:tsuzuki_unit_root}
The category of unit-root objects in ${\textbf{F-Isoc}(K)}$ is equivalent to the category of continuous representation of the Galois group $G_K:=\Gal(K^{\sep}/K)$ on finite dimensional $\Q_p$-vector spaces. Moreover, the unit-root objects in ${\textbf{F-Isoc}^{\dagger}(K)}$ correspond to  $G_K$-representations with finite image of the inertia subgroup $I_K$.
\end{thm}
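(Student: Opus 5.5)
The plan is to prove the two assertions of \Cref{thm:tsuzuki_unit_root} separately: first the equivalence for all unit-root objects of $\textbf{F-Isoc}(K)$ (essentially Katz's correspondence), then the characterisation of the overconvergent ones, which is Tsuzuki's contribution and the real content.

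\textbf{Step 1: the equivalence over $\mathcal{E}$.} Let $\Gamma^{\ur}$ be the $p$-adic completion of the maximal unramified extension of $\Gamma$. It is a complete discrete valuation ring with residue field $K^{\sep}$, it carries a Frobenius lift $\sigma$ extending the one on $\Gamma$, and it has a continuous $G_K$-action commuting with $\sigma$.

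Let $M$ be a unit-root F-isocrystal over $\mathcal{E}$. Because $M$ is unit-root, after a change of basis it has a $\Gamma$-lattice $M_0$ on which $F$ induces an isomorphism $\sigma^*M_0\cong M_0$; in a basis, the Frobenius matrix $A$ lies in $\GL_n(\Gamma)$. Set
$$V(M):=(M_0\otimes_\Gamma\Gamma^{\ur})^{F=1}\otimes\Q_p.$$
To see that $V(M)$ has the right size, solve the equation $A\,\sigma(X)=X$ successively modulo $p^m$.
\begin{itemize}
\item Modulo $p$, the equation $\bar A\,X^{(q)}=X$ defines a finite étale $K$-scheme (a Lang torsor). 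Hence it has a full set of $\F_p$-rational solutions over $K^{\sep}$.
\item Lifting a solution from $p^m$ to $p^{m+1}$ amounts to solving an Artin--Schreier type equation $Y-\bar A\,Y^{(q)}=(\text{known})$. This is again étale, so the lift exists over $K^{\sep}$.
\end{itemize}
Passing to the limit produces an isomorphism
$$V(M)\otimes_{\Q_p}\Gamma^{\ur}[1/p]\;\cong\; M\otimes_{\mathcal{E}}\Gamma^{\ur}[1/p],$$
and consequently $\dim_{\Q_p}V(M)=\operatorname{rk}M$.

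The connection needs no separate treatment. The derivation extends uniquely to the étale extension $\Gamma^{\ur}$, and the connection compatible with $F$ is forced to kill $F$-fixed vectors. So $\nabla$ is recovered from $F$, and morphisms automatically respect it.

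The quasi-inverse is $W\mapsto (W\otimes\Gamma^{\ur}[1/p])^{G_K}$. That it really is an inverse, and that the functor is fully faithful, follow from Galois descent along $\Gamma\to\Gamma^{\ur}$ together with $(\Gamma^{\ur})^{G_K}=\Gamma$ and $(\Gamma^{\ur})^{\sigma=1}=\Z_p$.

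\textbf{Step 2: finite inertia implies overconvergent.} Suppose $\rho(I_K)$ is finite. Choose a finite Galois extension $K'/K$ such that $\rho|_{G_{K'}}$ is unramified. The inertia of $K'$ acts trivially, so the restriction to $G_{K'}$ factors through $\Gal(\bar k/k')$. An unramified representation corresponds to a constant F-isocrystal $M_0\otimes_L\mathcal{E}'$, and this is visibly overconvergent.

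Now lift $K'/K$ to a finite étale extension $\mathcal{E}^{\dagger\prime}/\mathcal{E}^{\dagger}$; this is possible because $\Gamma^\dagger$ is henselian. Galois descent along this extension then produces the required object over $\mathcal{E}^\dagger$. Full faithfulness of $\textbf{F-Isoc}^\dagger(K)\to\textbf{F-Isoc}(K)$ on unit-root objects follows in the same way, by descending Hom-spaces.

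\textbf{Step 3: overconvergent implies finite inertia (main obstacle).} Suppose the Frobenius matrix $A$ has entries in $\Gamma^\dagger$. I would try to show that the ramification of the mod $p^m$ representations is uniformly controlled in $m$.
\begin{itemize}
\item First, after a finite extension, use the overconvergence radius to normalise $A\equiv 1$ modulo a high power of $p$, with the deviation still overconvergent on a fixed annulus.
\item Next, solve $A\,\sigma(X)=X$ by a Dwork-type iteration inside $\Gamma^{\dagger,\ur}$. Estimating on the annulus should show that each Artin--Schreier step modulo $p^{m}$ has upper ramification breaks bounded independently of $m$, or at least that the breaks stabilise.
\item Finally, a $p$-adic Lie subgroup of $\GL_n(\Z_p)$ whose wild inertia has bounded upper breaks must be finite. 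Indeed, an infinite pro-$p$ image forces the breaks to grow without bound, by Sen-type behaviour of the breaks of $\Z_p$-extensions in characteristic $p$.
\end{itemize}
Making this break estimate precise is the step I expect to be hardest. If the direct estimate fails, I would instead first reduce to rank one (via $\det$ and induction on a filtration after a finite extension), where the computation becomes explicit Artin--Schreier--Witt theory.
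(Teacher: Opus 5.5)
The paper does not prove this statement; it quotes it from Tsuzuki \cite{tsuzuki_unit_root}, Theorem 4.2.6. Your Steps 1 and 2 (Katz's correspondence over $\mathcal{E}$, and finite inertia $\Rightarrow$ overconvergent via henselianity of $\Gamma^{\dagger}$ and descent of a constant object) are fine in outline.

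The genuine gap is Step 3. It is not only that the break estimate is left heuristic: the hypothesis you start from is too weak. You use only that the Frobenius matrix $A$ has entries in $\Gamma^{\dagger}$, having declared in Step 1 that the connection needs no separate treatment. Over $\mathcal{E}$ the connection is indeed forced by $F$. But an object of $\textbf{F-Isoc}^{\dagger}(K)$ also requires that this forced connection be defined over $\mathcal{E}^{\dagger}$, and that is an extra condition. Without it, the implication you want is false.

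Here is a counterexample. Take $p$ odd and $k=\F_p$, so $\sigma(u)=u^p$, and consider the rank one object $Fe=(1+pu^{-1})e$, whose Frobenius matrix lies in $(\Gamma^{\dagger})^{\times}$.
\begin{itemize}
\item Write a Frobenius-fixed vector as $xe$ with $x=1+py$. The equation $\sigma(x)(1+pu^{-1})=x$ reduces modulo $p^2$ to $\bar y^{p}-\bar y=-t^{-1}$, a totally ramified Artin--Schreier extension.
\item So the associated character $\chi$ is $\equiv 1 \bmod p$ but nontrivial mod $p^2$ on $I_K$. Hence $\chi(I_K)$ is a nontrivial, and therefore infinite, subgroup of the torsion-free group $1+p\Z_p$.
\item This is consistent with Tsuzuki's theorem. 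The unique compatible connection is $\nabla e=\omega e$ with $\omega=-\sum_{m\geq 0}d\log\sigma^{m}(1+pu^{-1})$. Its coefficient at $u^{-p^m-1}du$ has $p$-adic valuation exactly $m+1$, so $\omega$ converges over $\mathcal{E}$ but not over $\mathcal{E}^{\dagger}$.
\end{itemize}

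Consequently no Dwork iteration on $A\sigma(X)=X$, and no estimate of the mod $p^m$ breaks in terms of the overconvergence radius of $A$, can prove Step 3. In this example $A\equiv 1 \bmod p$ is overconvergent on every annulus, yet the breaks of $\chi \bmod p^m$ are unbounded. Your rank-one fallback fails for the same reason, since the example is already rank one. Any control of ramification has to come from the overconvergence of $\nabla$ (for instance, its behaviour on $M\otimes_{\mathcal{E}^{\dagger}}\mathcal{R}$), and that is the ingredient your plan never uses.

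Two smaller points:
\begin{itemize}
\item Your final bullet appeals to $\Z_p$-extensions. That cannot handle inertia images whose Lie algebra is perfect (e.g.\ open in $\mathrm{SL}_2(\Z_p)$), since then no open subgroup has an infinite abelian quotient. Sen's theorem is a characteristic-zero result. The claim you need is true, but it requires a genuinely nonabelian input, e.g.\ Sen's theorem transported to characteristic $p$ via Deligne's comparison of truncated local fields.
\item The full-faithfulness remark at the end of Step 2 presupposes Step 3, or else Kedlaya's full faithfulness theorem.
\end{itemize}
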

\begin{nonexmp}
Let $A/K$ be an ordinary abelian variety. Then the maximal \'etale quotient of $A[p^{\infty}]$ corresponds to a unit-root F-isocrystal over $\mathcal{E}$. Although the F-isocrystal of $A$ over $\mathcal{E}$ is overconvergent, the unit-root sub-object associated to the $p$-power torsion points is \textit{not}.
\end{nonexmp}

\subsection{Logarithmic structures}\label{2.3} For this section, we consider the logarithmic scheme $S = \Spec \o_K$ with the canonical log structure associated to the closed point $t=0$. 

We let $R=\mathcal{E}_+$ or $\mathcal{R}_+$. Write  $\Omega_{R,\log}^{1}:= R \cdot d\log u=R \frac{du}{u}$ for the free module over $R$ generated by $d\log u$ and the logarithmic derivation $d_{\log}:R\rightarrow \Omega_{R,\log}^{1}$ given by $u\frac{d}{du}$. Moreover, we have the map $$d\sigma_{\log}:\Omega_{R,\log}^{1
}\rightarrow \Omega_{R,\log}^{1
} :\sum_i c_iu^i \frac{du}{u}\mapsto \sum_i c_iu^{qi}q\frac{du}{u}.$$
\vspace{-1mm}
\begin{defn}
We define the category of log-isocrystals $\textbf{F-Isoc}_{\log}(S)$ (resp. $\textbf{F-Isoc}_{\log}^{\ddagger}(S)$) consisting of objects that are triples $(M,F,\nabla_{\log})$ over $R=\mathcal{E}_+$ (resp. $\mathcal{R}_+$) such that 
\begin{enumerate}
\item $M$ is a finite projective module over $R$ with the $\sigma$-linear map $F$. \item A logarithmic connection $\nabla_{\log}: M\rightarrow M\otimes\Omega_{R,\log}^{1}$ that is compatible with the action of the Frobenius. That is $F\cdot\nabla_{\log}=\nabla_{\log}\cdot (F\otimes d\sigma_{\log})$.
\end{enumerate}
A morphism of two objects is an $R$-linear map between the two finitely generated modules that is compatible with the action of the Frobenius and the logarithmic connection. We will denote the set of homomorphisms between two objects in this category by $\Hom_{F,\nabla_{\log}}(M_1,M_2)$.
 \end{defn}
 \begin{exmp}
 Let $A/K$ be an abelian varieties with semi-stable reduction. Then the Dieudonn\'e crystal of $A[p^{\infty}]$ extends to a log-Dieudonn\'e on $S$ as defined in \cite{kato_trehan} 4.1. We can equip the Dieudonn\'e crystal naturally with the structure of a log-F-isocrystal over $\mathcal{E}_+$ (see for instance \cite{keerthitor}, 1.4.5). In \Cref{raynaud}, we elaborate more on this. 
\end{exmp}
 
Let $M$ be an object in $\textbf{F-Isoc}_{\log}(S)$. Pulling back $M$ to the divisor $t=0$ yields a finite free module $M_0$ over $L^{\ur}$ along with a semi-linear Frobenius map $F_0$. In addition, any integrable logarithmic connection $\nabla_{\log}$ induces an $L^{\ur}$-linear residue map $N_0: M_0\rightarrow M_0$. Explicitly, if $\nabla_{\log} =d+ N\cdot \frac{dt}{t}$ such that $N$ is matrix with entries in $\mathcal{E}_+$, then $N_0=N|_{t=0}$. Moreover, its compatibility with the Frobenius implies that $F_0\cdot N_0=q N_0\cdot F_0$, forcing $N_0$ to be nilpotent. See \cite{kedlaya_book}, Definition 15.1.1. This functor provides a ``reduction mod $t$" map from the category of log F-isocrystals to log Dieudonn\'e modules over $L^{\ur}$ consisting of triples $(M_0, F_0, N_0)$ satisfying the above properties. In particular we have that:
\begin{prop}[Logarithmic Dwork's trick]\label{prop:redmodtlog}
The category $\textbf{F-Isoc}_{\log}^{\ddagger}(\o_K)$ of log F-isocrystals over $\mathcal{R}_+$ is equivalent to the category of log-Dieudonn\'e modules over $L^{\ur}$. 
\end{prop}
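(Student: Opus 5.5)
We prove \Cref{prop:redmodtlog} by constructing the reduction functor $M\mapsto (M_0,F_0,N_0)$ explicitly and exhibiting a quasi-inverse. The plan is to extend a log-Dieudonn\'e module over the disc by a canonical formula, and then show that every log F-isocrystal over $\mathcal{R}_+$ is of this form. The method is the logarithmic analogue of Dwork's trick.

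\emph{Step 1 (functor and quasi-inverse).} The functor is restriction to $t=0$, as described just before the statement, and it visibly respects tensor products and morphisms. For the quasi-inverse, let $(M_0,F_0,N_0)$ be a log-Dieudonn\'e module over $L^{\ur}$. We set $M=M_0\otimes_{L^{\ur}}\mathcal{R}_+$ with Frobenius $F=F_0\otimes\sigma$ and connection
$$\nabla_{\log}=d+N_0\otimes\frac{du}{u}.$$
Since $d\sigma_{\log}(du/u)=q\,du/u$, the relation $F_0N_0=qN_0F_0$ is exactly the required compatibility $F\cdot\nabla_{\log}=\nabla_{\log}\cdot(F\otimes d\sigma_{\log})$. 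Integrability is automatic in one variable. Restricting this object to $t=0$ recovers $(M_0,F_0,N_0)$, so reduction composed with extension is the identity.

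\emph{Step 2 (essential surjectivity).} Let $(M,F,\nabla_{\log})$ be given over $\mathcal{R}_+$. We seek a horizontal, Frobenius-equivariant isomorphism $M\cong M_0\otimes\mathcal{R}_+$ whose reduction at $0$ is the identity. Choose a basis and write $\nabla_{\log}=d+N\,du/u$, with $N\in M_n(\mathcal{R}_+)$ and $N(0)=N_0$. We look for a gauge transformation $Y=\sum_{i\ge 0}Y_iu^i$ with $Y_0=1$ satisfying
$$u\frac{dY}{du}+NY=YN_0.$$
Comparing coefficients gives the recursion
$$iY_i+N_0Y_i-Y_iN_0=-\sum_{j\ge1}N_jY_{i-j}.$$
This recursion is solvable because $N_0$ is nilpotent: the operator $X\mapsto N_0X-XN_0$ is nilpotent, so $i+\mathrm{ad}_{N_0}$ is invertible for every $i\ge1$. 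That is, there are no resonance issues, since the residue has eigenvalues all $0$ and so no two differ by a positive integer. This produces a formal solution $Y\in \mathrm{GL}_n(L^{\ur}[[u]])$.

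\emph{Step 3 (convergence, the main obstacle).} The formal solution has denominators of size roughly $1/i!$ in $p$-adic terms, so the key issue is showing that $Y$ converges on the whole open unit disc. Here we use the Frobenius, as in Dwork's trick. From $F$-compatibility, the matrix $Y$ and $A\,\sigma(Y)A_0^{-1}$ both solve the same differential equation with the same initial term, where $A$ is the Frobenius matrix and $A_0=A(0)$. By uniqueness of the normalized solution (Step 2's recursion is uniquely determined), we get
$$Y=A\,\sigma(Y)A_0^{-1}.$$
We first note that $Y$ converges on some small disc $|u|<\rho$, by the standard estimate for regular singular equations with nilpotent residue. This estimate, with only mild loss from the $1/i$ factors, yields a positive radius. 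The functional equation then shows $Y$ converges on $|u|<\rho^{1/q}$, since $A$ has entries in $\mathcal{R}_+$ and $\sigma(u)=u^q$. Iterating gives convergence on $|u|<1$, so $Y\in\mathrm{GL}_n(\mathcal{R}_+)$. The inverse is handled the same way using the dual equation. Hence $Y$ is an isomorphism in $\textbf{F-Isoc}_{\log}^{\ddagger}(\o_K)$ from the constant object of Step 1 to $M$, and it automatically intertwines the Frobenius because of the functional equation.

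\emph{Step 4 (full faithfulness).} A morphism $f:M\to M'$ between two objects, after the trivializations of Step 3, becomes a horizontal map between constant objects. Write it as $f=\sum f_iu^i$. The horizontality equation forces
$$if_i+N_0'f_i-f_iN_0=0\quad\text{for }i\ge1.$$
By nilpotence, $i+(\text{nilpotent operator})$ is invertible, so $f_i=0$ for $i\ge1$. Thus $f=f_0$ is determined by, and arbitrary among, morphisms of log-Dieudonn\'e modules. This gives the equivalence. I expect Step 3 to be the only genuinely delicate point: one has to control the initial radius of convergence despite the $1/i$ denominators, and then propagate it with Frobenius.
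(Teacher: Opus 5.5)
Your proof is correct. It uses the same logarithmic Dwork-trick strategy as the paper, but proves the key step instead of citing it.

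\textbf{The paper's route.} The paper takes from Kedlaya's book a basis of $M$ in which $\nabla_{\log}=d+N_0\,du/u$. It then notes that Frobenius compatibility in that basis forces the Frobenius matrix to be constant, implicitly using the same invertibility of $i+(\text{nilpotent})$ for $i\ge 1$ that you use.

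\textbf{Your route.} You construct the normalized formal gauge $Y$ and derive $Y=A\,\sigma(Y)A_0^{-1}$ from the matrix form of $F$-compatibility and its specialization at $u=0$; that computation checks out. You then bootstrap the radius of convergence $\rho\mapsto\rho^{1/q}$. This gives convergence on the open disc and constancy of Frobenius at the same time. The initial positive radius is elementary here: $\|(i+\mathrm{ad}_{N_0})^{-1}\|\le C|i|_p^{-m}$ (with $\mathrm{ad}_{N_0}^m=0$) and $|n!|_p^{-1}\le p^{n/(p-1)}$ give only geometric growth of the $Y_n$.

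\textbf{Full faithfulness.} Your Step 4 proves it explicitly. The paper's argument only shows that every object is isomorphic to $M_0\otimes\mathcal{R}_+$ and leaves full faithfulness implicit.

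\textbf{One small addition.} Choosing a basis uses that finite projective $\mathcal{R}_+$-modules are free. This holds because $\mathcal{R}_+$ is a B\'ezout domain by Lazard's results on the open disc.
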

\begin{proof}
Let $(M,\nabla_{\log}, F)$ be any log-isocrystal over $\mathcal{R}_+$. Suppose the connection $\nabla_{\log}=d+N du/u$ with a nilpotent residue $N_0$. Then by \cite{kedlaya_book}, Corollary 15.2.6, we can find a basis of $M$ over $\mathcal{R}_+$ such that the connection becomes $d+N_0 du/u$.  By the compatibility of the Frobenius and the connection, we see that the Frobenius $F$ has entries that are constants. Hence, $M=M_0\otimes \mathcal{R}_+$ where $M_0 = M/tM$ with a action of a Frobenius $F_0=F$ and the nilpotent operator given by $N_0$.
\end{proof}
For $R =\mathcal{R}_+$ and $\mathcal{R}$, we write $R[\log u]$ to be the polynomial ring generated by the symbol $\log u$ with coefficients in $R$. We extend $\sigma$ on $R$ to $R[\log u]$ by $\sigma (\log u) = \log (u^q) :=q\cdot \log u$ and extend derivation $d$ to $d:R[\log u]\rightarrow \Omega^1_{R[\log u],\log}$ by defining $d\log u = \frac{du}{u}$ as above.

The following diagram of faithful functors between the categories defined above will be useful to keep in mind (when $\nabla_{\log}=\nabla,$ a regular connection, this is in \cite{kedlaya_notes}, Remark 2.11): 
\[\begin{tikzcd}
\textbf{F-Isoc}_{\log}(\o_K) \arrow{r}{\For^{\log}}\arrow[swap]{d} & \textbf{F-Isoc}^{\dagger}(K) \arrow[swap]{d}\arrow{r}{\For^{\dagger}}&\textbf{F-Isoc}(K)\\
\textbf{F-Isoc}_{\log}^{\ddagger}(\o_K) \arrow{r} & \textbf{F-Isoc}^{\ddagger}(K)
\end{tikzcd}
\]

We note that by Theorem 1.1 in \cite{kedlayafullfaithfull} the functor $\For^{\dagger}$ is fully faithful. Furthermore, we prove in \Cref{prop:extfullyfaithful} that the functor $\For^{\log}$ is fully faithful.
 
\section{Tannakian categories and the parabolicty theorem}
\subsection{Neutral Tannakian categories} This section is dedicated to checking that the categories we work with are neutral Tannakian. In particular, we define \textit{local} monodromy groups that allow us to prove  \Cref{cor:subquotientoverconverget}, the ultimate goal of this section.
Let $R$ be one of the rings $\mathcal{E}, \mathcal{E}^{\dagger},\mathcal{E}_+$ or $L^{\ur}=W(k)(1/p)$.
\begin{lem}
The category of F-isocrystals over $R$ is abelian.
\end{lem}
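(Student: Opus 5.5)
The plan is to exhibit the category of F-isocrystals over $R$ as a full subcategory of an ambient abelian category of $R$-modules with extra structure, closed under kernels and cokernels. Only the $R$-linear structure on the underlying modules is needed; no Frobenius-specific analysis enters beyond one step. The cases split naturally: $R=\mathcal{E}$, $\mathcal{E}^{\dagger}$ and $L^{\ur}$ are fields, while $\mathcal{E}_+$ is a discrete valuation ring with uniformizer $u$. Note that over $L^{\ur}$ there is no connection, so the objects are just $(M,F)$.

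\emph{Fields.} When $R$ is a field, finite projective means finite-dimensional. Let $f:(M,F_M,\nabla_M)\to(N,F_N,\nabla_N)$ be a morphism. Its kernel and cokernel as $R$-vector spaces are finite-dimensional. Since $f$ commutes with $F$ and $\nabla$, the kernel $\ker f$ is stable under $F_M$ and $\nabla_M$, and the image is stable under $F_N$ and $\nabla_N$. Hence $F$ and $\nabla$ descend to $\coker f$, and the Leibniz rule and the compatibility $F\cdot\nabla=\nabla\cdot(F\otimes d\sigma)$ are inherited. The one point needing care is that the induced $\sigma$-linear maps are again isomorphisms.
\begin{itemize}
\item On $\ker f$, the restriction of $F_M$ is injective.
\item Its linearization $\sigma^*\ker f\to\ker f$ is then an injective $R$-linear map between spaces of equal finite dimension, hence bijective.
\item The same dimension count, combined with surjectivity inherited from $F_N$, handles $\coker f$.
\end{itemize}
Finally, image equals coimage because this already holds for vector spaces and the structures match. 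So the category is abelian.

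\emph{The ring $\mathcal{E}_+$.} This is the main obstacle. Kernels are fine: a submodule of a finite free module over a DVR is free. The cokernel, however, may have $u$-torsion. The fix is to show this cannot happen by using the Frobenius. Let $T$ be the torsion submodule of $\coker f$, a finite-length $\mathcal{E}_+$-module stable under $F$, whose linearization is an isomorphism $\sigma^*T\cong T$. Since $\sigma(u)=u^q$, length multiplies by $q$ under $\sigma^*$, so $\mathrm{length}(\sigma^*T)=q\cdot\mathrm{length}(T)$. Equality of lengths then forces $T=0$.

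If this length argument is not clean enough, I would instead use the connection. A finite $\mathcal{E}_+$-module with an integrable connection relative to $d/du$ is torsion-free, because differentiating $u^n m=0$ gives $nu^{n-1}m+u^n\nabla m=0$, and inducting in characteristic $0$ lowers $n$ to zero. Either route makes the cokernel projective, and the field argument above then applies verbatim. With kernels, cokernels, and image $=$ coimage established in every case, the category is abelian.
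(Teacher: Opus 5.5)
There is a genuine gap in your $\mathcal{E}_+$ case. It rests on a false premise, which the paper also asserts in its list of basic rings: $\mathcal{E}_+=W(k)[[u]][1/p]$ is a PID, but it is not a discrete valuation ring. The paper's own proof uses only the PID property.

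Every irreducible distinguished polynomial $P$ generates a maximal ideal. For example, $u-p$ is not a unit (evaluate at $u=p$), and $\mathcal{E}_+/(u-p)\cong L^{\ur}$. So the torsion $T$ of $\coker f$ need not be $u$-torsion, and your key claim $\mathrm{length}(\sigma^*T)=q\cdot\mathrm{length}(T)$ is false. Indeed $\sigma^*\bigl(\mathcal{E}_+/(u-p)\bigr)\cong\mathcal{E}_+/(u^q-p)\cong L^{\ur}(p^{1/q})$ is a field, of length $1$. Your fallback likewise only disposes of $u$-primary torsion.

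Both routes can be repaired.
\begin{itemize}
\item \emph{Frobenius route.} Measure $T$ by $\dim_{L^{\ur}}$ instead of length. This dimension is finite by Weierstrass preparation. Since $\mathcal{E}_+$ is free over $\sigma(\mathcal{E}_+)$ on $1,u,\dots,u^{q-1}$, one gets $\dim_{L^{\ur}}\sigma^*T=q\dim_{L^{\ur}}T$, so $\sigma^*T\cong T$ forces $T=0$.
\item \emph{Connection route.} Fix a prime $P$ and let $P^N$, with $N\geq 1$ minimal, annihilate the $P$-primary part $T_P$, which is $\nabla$-stable. Differentiating $P^Nm=0$ and using $P^N\nabla m=0$ gives $NP^{N-1}P'm=0$. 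Since $P\nmid P'$ in characteristic $0$, $P'$ acts invertibly on $T_P$, contradicting minimality of $N$. Your $u$-sketch needs this same device (the annihilator of the whole primary part) to know that $u^n\nabla m=0$.
\end{itemize}
The corrected connection route is exactly the paper's argument (differential simplicity of $\mathcal{E}_+$ plus Kedlaya's Proposition 1.2.6), made explicit. The corrected Frobenius route is a genuinely different argument. It has the bonus of also working for log connections, where $u\,d/du$ preserves $(u)$ and differential simplicity is lost.

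Separately, you check something the paper passes over: that the induced Frobenius on $\ker f$ and $\coker f$ still has bijective linearization. But two steps there need fixing.
\begin{itemize}
\item The step ``$F_M|_{\ker f}$ is injective, hence its linearization is injective'' is invalid when $\sigma$ is not surjective, as on $\mathcal{E}$ and $\mathcal{E}^{\dagger}$. For instance, the semilinear map $e_1\mapsto e_1$, $e_2\mapsto ue_1$ is injective but has rank-one linearization.
\item Over $\mathcal{E}_+$, an injective map between free modules of equal rank need not be bijective, so the field argument does not apply verbatim.
\end{itemize}
A uniform fix: $\sigma^*$ is exact in every case (trivially over fields, and over $\mathcal{E}_+$ by the freeness above). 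Hence $\sigma^*\ker f=\ker\sigma^*f$ and $\sigma^*\coker f=\coker\sigma^*f$. The isomorphisms $\sigma^*M\cong M$ and $\sigma^*N\cong N$ intertwine $\sigma^*f$ with $f$, so they induce isomorphisms on kernels and cokernels.
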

\begin{proof}
For $R$ above, denote by $\ker(f)$ for the kernel and $\coker(f)$ for the cokernel of a morphism $f:M\rightarrow N$ between two F-isocrystals. We note that $\ker(f)$ and $\coker(f)$ are invariant under the connection and the Frobenius because of their compatibility with $f$. As $\mathcal{E},\mathcal{E}^{\dagger
}$, and $L$ are fields, we see that $\ker(f)$ and $\coker(f)$ are also finite free. For $R=\mathcal{E}_+$, we see that kernel is finite as $R$ is noetherian and free as $R$ is a principal ideal domain. The cokernel is finite as it is a quotient of $N$. We further note that the only proper ideals of $R$ stable under $d/dt$ are $0$ and itself, that is $R$ is differentially simple. Therefore, by Proposition 1.2.6 in \cite{kedlayaformalstructures}, $\coker(f)$ is free. 
\end{proof}
\begin{notn}\label{notn:tannakian}
For any object $M$ in the category of F-isocrystals over a $R$, we will write $\langle{M}\rangle$ for the full rigid abelian tensor subcategory of F-isocrystals generated by $M$. We note that any object in $\langle{M}\rangle$ is a subquotient of a finite direct sum of tensor constructions of the form $M^{\otimes n_i}\otimes (M^{\vee})^{\otimes m_i}$ which we denote by $T(M)$.
\end{notn}

For the rest of this section, we work over $\mathcal{E}$ and $\mathcal{E}^{\dagger}$.

\begin{lem}\label{lem:tannakian}
The categories $\textbf{F-Isoc}(K)$ and $\textbf{F-Isoc}^{\dagger}(K)$ are Tannakian over $\Q_p$.
\end{lem}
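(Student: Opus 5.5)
We check the axioms of a (not necessarily neutral) Tannakian category over $\Q_p$ in Deligne's sense: an abelian, $\Q_p$-linear, rigid symmetric monoidal category with $\End(\mathbf{1})=\Q_p$ that admits a fibre functor over some nonzero $\Q_p$-algebra. Equivalently, by Deligne's criterion (\cite{delignetannakian}), it suffices to show that every object has integer dimension $\ge 0$ and that $\End(\mathbf{1})=\Q_p$. We treat $\textbf{F-Isoc}(K)$ and $\textbf{F-Isoc}^{\dagger}(K)$ in parallel, with $R=\mathcal{E}$ or $\mathcal{E}^{\dagger}$.

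\emph{Step 1 (abelian and tensor structure).} The preceding lemma gives that the category is abelian. The tensor product is $(M_1\otimes_R M_2, F_1\otimes F_2, \nabla_1\otimes 1+1\otimes\nabla_2)$, which is compatible with Frobenius because $F$ is $\sigma$-semilinear and $\nabla$ satisfies the Leibniz rule. The unit is $\mathbf{1}=(R,\sigma,d)$. The internal Hom is $\Hom_R(M_1,M_2)$ with $F(f)=F_2\circ f\circ F_1^{-1}$; this uses that $F$ is an isomorphism, which is part of the definition. The connection on it is the usual one. Duals are $M^\vee=\Hom(M,\mathbf{1})$, and the evaluation and coevaluation maps are horizontal and Frobenius-equivariant, so the category is rigid. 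Associativity and commutativity constraints come from those of $R$-modules. The morphisms are $\Q_p$-linear since $\Q_p=\mathrm{Frac}(W(\F_p))$ and $\sigma$ fixes it.

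\emph{Step 2 ($\End(\mathbf{1})=\Q_p$).} An endomorphism of $\mathbf{1}$ is an element $r\in R$ with $dr=0$ and $\sigma(r)=r$. For $r=\sum x_iu^i$, the condition $dr=0$ forces $x_i=0$ for $i\ne 0$; we use that $R$ has characteristic $0$, so $i x_i=0$ implies $x_i=0$. Hence $r\in L^{\ur}=W(k)[1/p]$. Then $\sigma(r)=r$ means $r$ is fixed by the Witt-vector Frobenius on $W(k)[1/p]$, and the $q$-power Frobenius lift used here fixes exactly $W(\F_q)[1/p]$. This is where care is needed. If $\sigma$ is the $q$-power Frobenius, $\End(\mathbf{1})$ is the unramified extension $\Q_q$ rather than $\Q_p$. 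I would then either work with the absolute $p$-Frobenius, or state the result as ``Tannakian over $\Q_q$'' and note that restricting scalars still gives a Tannakian category over $\Q_p$. I expect the paper intends the $p$-power Frobenius, or $k=\F_p$ in the relevant normalization.

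\emph{Step 3 (fibre functor / dimension).} This step is the main obstacle, since there is no obvious $\Q_p$-valued fibre functor. I would use Deligne's criterion: the categorical dimension of $M$, namely $\mathrm{ev}\circ\mathrm{coev}\in\End(\mathbf{1})$, equals $\mathrm{rank}_R M$. This holds because the forgetful functor to $R$-modules is a faithful tensor functor, and on $R$-modules the trace of the identity is the rank, a nonnegative integer. Deligne's theorem (\cite{delignetannakian}, Theorem 7.1) then yields a fibre functor over some nonzero $\Q_p$-algebra. A more concrete alternative is to produce the fibre functor over the field $R$ itself, which is a $\Q_p$-algebra: the forgetful functor $\omega:M\mapsto M$ to $R$-vector spaces. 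It is exact and faithful because $R$ is a field, and it is a tensor functor. This already makes the category Tannakian in the non-neutral sense, with $\omega$ defined over $R\supset\Q_p$. So Step 3 reduces to observing that $\omega$ is an exact faithful $\Q_p$-linear tensor functor, which is immediate.
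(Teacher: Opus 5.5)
Your proposal is correct and is essentially the paper's proof: you apply Deligne's criterion (\cite{delignetannakian}, Theorem 7.1), identify $\End(\mathbf{1})$ with the $\sigma$-fixed field, and compute the categorical dimension as $\mathrm{rank}_R M\geq 0$; your alternative of using the forgetful functor to $R$-vector spaces as a fibre functor over $\Spec R$ is also valid and bypasses Theorem 7.1. You are right that with $\sigma(u)=u^q$ and $k=\F_q$ the fixed field is $\Q_q$, a point the paper's proof passes over, but your ``restrict scalars'' fallback does not work: the same category viewed as $\Q_p$-linear still has $\End(\mathbf{1})=\Q_q\neq\Q_p$, so it is not Tannakian over $\Q_p$; the correct fix is your first option, taking $\sigma$ to be the absolute ($p$-power) Frobenius lift, which is also what Tsuzuki's equivalence with $\Q_p$-representations and the later use of $F^s$ presuppose.
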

\begin{proof}
We note that both the categories are linear over $\Q_p$ which is the fixed field of the action of $\sigma$ on $\mathcal{E}$ and $\mathcal{E}^{\dagger}$. As $\Q_p$ is a field of characteristic zero, we use Deligne's criteria in \cite{delignetannakian}, Theorem 7.1 to check our claim. Both the categories are rigid, tensor, and abelian. The unit object $\textbf{1}$ in the two categories are the rings $\mathcal{E}$ and $\mathcal{E}^{\dagger}$ respectively and $\End(\textbf{1})=\Q_p$. We check (2) in \cite{delignetannakian}, Theorem 7.1. Let $M$ be an F-isocrystal over $\mathcal{E}$ or $\mathcal{E}^{\dagger}$. As $M$ is finite free, we can compute its categorical dimension by choosing a basis \{$e_i\}_{i=1}^r$ of $M$ and the standard bases $\{e_i^\vee\}_{i=1}^r$ for $M^{\vee}$ with respect to them. Then the categorical dimension is the the map $\textbf{1}\rightarrow\textbf{1}: 1\mapsto\sum_i e_i\otimes e_i^{\vee}\rightarrow \sum_ie_i^{\vee}(e_i)=r\geq 0$. This finishes the proof.
\end{proof}

\begin{lem}\label{lem:neutraltannakian}
Let $M$ and $M^{\dagger}$ be an F-isocrystal over $\mathcal{E}$ and $\mathcal{E}^{\dagger}$ respectively. Then the Tannakian category generated by $M$ (resp. $M^{\dagger}$) in  $\textbf{F-Isoc}(K)$ (resp. $\textbf{F-Isoc}^{\dagger}(K)$) becomes neutral over a finite extension of $\Q_p$.

\end{lem}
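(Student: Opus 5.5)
By \Cref{lem:tannakian}, $\textbf{F-Isoc}(K)$ and $\textbf{F-Isoc}^{\dagger}(K)$ are Tannakian over $\Q_p$, so the full subcategories $\langle M\rangle$ and $\langle M^{\dagger}\rangle$ of \Cref{notn:tannakian} are Tannakian over $\Q_p$ as well, being rigid abelian tensor subcategories closed under subquotients. The plan is to use \cite{delignetannakian}, Corollary 6.20. It says that a Tannakian category over a field $k_0$ of characteristic zero that is generated by a single object (that is, $\otimes$-finitely generated) admits a fibre functor over some finite extension of $k_0$. Applying it with $k_0=\Q_p$ gives a finite extension $E/\Q_p$ and an exact faithful $E$-linear tensor functor $\omega:\langle M\rangle_{E}\rightarrow \Vector_E$. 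Here $\langle M\rangle_E$ is the scalar extension of the category to $E$. In other words, $\langle M\rangle\otimes_{\Q_p}E$ is neutral over $E$, which is the meaning of ``becomes neutral over a finite extension''. The same argument applies verbatim to $M^{\dagger}$.

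The proof then has two steps.
\begin{enumerate}
\item Check that $\langle M\rangle$ satisfies the hypotheses of Corollary 6.20. Concretely: it is Tannakian (already done), $\End(\textbf{1})=\Q_p$ (computed in \Cref{lem:tannakian}), and it is generated by the single object $M$. The last point is immediate from \Cref{notn:tannakian}, since every object of $\langle M\rangle$ is a subquotient of some $T(M)$.
\item Deligne's argument produces a fibre functor over some commutative $\Q_p$-algebra, and then uses finite generation to reduce to a finite type algebra, hence one with a closed point of finite residue field degree. Specialising at that point gives the fibre functor over a finite extension $E$. We simply quote this.
\end{enumerate}

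A more hands-on route to an explicit fibre functor would be to base change to $K^{\perf}$ and its algebraic closure and use Dieudonné–Manin. This only yields a fibre functor over a large non-finite extension of $\Q_p$, namely the $W(\overline{K}^{\perf})[1/p]$-invariants of slope pieces. So the abstract Deligne result is needed to cut the field down to a finite extension.

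I expect the main point of care to be the precise formulation of ``neutral over a finite extension''. One must decide whether the conclusion means that the scalar extension $\langle M\rangle_E$ is neutral, or that $\langle M\rangle$ admits an $E$-valued fibre functor (a functor to $E$-vector spaces that is only $\Q_p$-linear). These two readings are equivalent via the universal property of scalar extension of Tannakian categories. I would state the lemma in the first form and note that it yields a local monodromy group $G(M)$, an affine group scheme of finite type over $E$, with $\langle M\rangle_E\simeq \Rep_E(G(M))$. This is the form needed later in the paper.
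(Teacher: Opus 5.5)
Your proposal is correct and matches the paper's proof. Both arguments take from \Cref{lem:tannakian} that $\langle M\rangle$ and $\langle M^{\dagger}\rangle$ are Tannakian over $\Q_p$, so neutral over some $\Q_p$-algebra, and then apply \cite{delignetannakian}, Corollary 6.20, using that each category is generated by a single object, to get a fibre functor over a finite extension of $\Q_p$.
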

\begin{proof}
By \Cref{lem:tannakian}, we know that $\langle{M}\rangle$ and $\langle{M}^{\dagger}\rangle$ are Tannakian over $\Q_p$. Therefore they become neutral over some $\Q_p$-algebra (see \cite{Milne_Deligne_tannakian}, Definition 3.7). As these categories are generated by a single object, we apply \cite{delignetannakian} Corollary 6.20 to see that they are in fact, neutral over a finite field extension of $\Q_p$.
\end{proof}
\subsection{Extension of scalers, Galois descent, and monodromy groups.}
As a consequence of the above Lemma, we will have to work with the categories $\textbf{F-Isoc}(K)$ and $\textbf{F-Isoc}^{\dagger}(K)$ with an $L$-linear structure for a finite extension $L$ of $\Q_p$. We refer to \cite{abe_langlands} Section 1.4.1.
\begin{defn} Let $\mathcal{C}$ be a $\Q_p$-linear abelian category and let $L$ be a finite extension of $\Q_p$. Then define the extension of scalers of $\mathcal{C}$ to $L$, denote as $\mathcal{C}_L$ to consist of objects of the form $M_L:=(M,\rho)$ where $M$ is an object of $\mathcal{C}$ and $\rho: L\rightarrow \End(M)$. The morphisms are morphisms in $\mathcal{C}$ that are compatible with the $L$-structure.

\end{defn}
When $\mathcal{C}$ is abelian, $\mathcal{C}_L$ is also abelian. We note that there exists a natural forgetful functor $\For_L:\mathcal{C}_L\rightarrow \mathcal{C}$ by forgetting the $L$-action on $M$. Going the other way, let $x_1,\cdots,x_d$ be a basis of $L$ over $K$ and let $M\in\mathcal{C}$ and write $\rho_K: M\rightarrow \End(M)$ for the $K$-linear structure on $M$. We define $M\otimes_K L\in \mathcal{A}_L$ with $L$-structure given by right multiplication.  This gives an exact functor $\iota_L:\mathcal{C}\rightarrow \mathcal{C}_L$ which is adjoint to the forgetful functor. 

 For any $\sigma\in \Gal(L/K)$ and an object $M_L=(M,\rho) \in\mathcal{C}$, define $M_L^{\sigma}:=(M, \sigma\circ\rho)$.  Let $M_L\in \mathcal{A}_L$. Then there exists an $M\in \mathcal{C}$ such that $M_L = M\otimes_K L$ if and only if $M_L$ comes with a Galois descent data, that is, isomorphisms $\varphi_{\sigma}: M_L^{\sigma}\rightarrow M_L$ such that $\varphi_{\sigma\tau} = \varphi_\sigma\circ \varphi_\tau^ \sigma$
where $\varphi_\tau^\sigma:(M^{\sigma})^{\tau}\rightarrow M^{\sigma}.$

\vspace{2mm}
We now come back to our set up. Let $M$ and $M^{\dagger}$ be an F-isocrystal over $\mathcal{E}$ and $\mathcal{E}^{\dagger}$ respectively. By \Cref{lem:neutraltannakian} there exists an $L$-linear fibre functor $\omega:\langle{M}\rangle_L\rightarrow \Vector_L$ for some finite extension $L$ of $\Q_p$. The composition of $\omega$ with the forgetful functor $\For_L^{\dagger}:\langle M^{\dagger}\rangle_L\rightarrow \langle M \rangle_L$ gives a fibre functor ${\omega}^{\dagger}:\langle{M}^{\dagger}\rangle_L\rightarrow L$. Write $G:=\Aut^{\otimes}(\omega)$ and $G^{\dagger}=\Aut^{\otimes}(\omega^{\dagger})$ for the associated algebraic monodromy groups defined over $L$ (\cite{scaler_ext} Theorem 3.1.7). The forgetful functor induces a morphism of the associated algebraic groups $f:G\rightarrow G^{\dagger}$. 

\begin{lem}
The morphism $f$ is a closed immersion, that is, $G\subset G^{\dagger}$.
\end{lem}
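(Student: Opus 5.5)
We use the standard criterion of Deligne--Milne (\cite{Milne_Deligne_tannakian}, Proposition 2.21(b)). Let $\phi:\mathcal{A}\rightarrow\mathcal{B}$ be a tensor functor of neutral Tannakian categories compatible with fibre functors, and $f:G_{\mathcal{B}}\rightarrow G_{\mathcal{A}}$ the induced morphism. Then $f$ is a closed immersion if and only if every object of $\mathcal{B}$ is isomorphic to a subquotient of an object $\phi(X)$ with $X\in\mathcal{A}$. Here $\mathcal{A}=\langle M^{\dagger}\rangle_L$, $\mathcal{B}=\langle M\rangle_L$ and $\phi=\For^{\dagger}_L$. By construction $\omega^{\dagger}=\omega\circ\For^{\dagger}_L$, so the morphism induced by $\phi$ is exactly $f:G\rightarrow G^{\dagger}$. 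We also take $M=\For^{\dagger}(M^{\dagger})=M^{\dagger}\otimes_{\mathcal{E}^{\dagger}}\mathcal{E}$, so that $\For^{\dagger}$ actually sends $\langle M^{\dagger}\rangle$ into $\langle M\rangle$. This is the setup in which the lemma makes sense.

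\textbf{Step 1.} We show that $\For^{\dagger}$ is a $\Q_p$-linear exact tensor functor. It sends $M^{\dagger}$ to $M$ and commutes with duals and tensor products. Base change $-\otimes_{\mathcal{E}^{\dagger}}\mathcal{E}$ is exact because $\mathcal{E}$ is a field extension of $\mathcal{E}^{\dagger}$, hence flat. Consequently $\For^{\dagger}$ sends the tensor constructions $T(M^{\dagger})$ of \Cref{notn:tannakian} to the corresponding constructions $T(M)$. The same holds after extending scalars to $L$, since $\For^{\dagger}_L$ just carries the $L$-action along.

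\textbf{Step 2.} We verify the subquotient condition. By \Cref{notn:tannakian}, every object $N$ of $\langle M\rangle$ is a subquotient of a finite direct sum $T=\bigoplus_i M^{\otimes n_i}\otimes(M^{\vee})^{\otimes m_i}$. This sum is $\For^{\dagger}(T^{\dagger})$ with $T^{\dagger}=\bigoplus_i (M^{\dagger})^{\otimes n_i}\otimes(M^{\dagger\vee})^{\otimes m_i}\in\langle M^{\dagger}\rangle$. For the $L$-linear categories we argue as follows. Any $N_L=(N,\rho)\in\langle M\rangle_L$ is an $L$-stable subquotient of some $T$, hence a subquotient of $T\otimes_{\Q_p}L=\iota_L(T)=\For^{\dagger}_L(\iota_L(T^{\dagger}))$. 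To produce such a subquotient, use the adjunction between $\iota_L$ and $\For_L$: the counit $N\otimes_{\Q_p}L\twoheadrightarrow N_L$ is surjective, so $N_L$ is a quotient of $N\otimes L$. Since $N$ is a subquotient of $T$, the object $N\otimes L$ is a subquotient of $T\otimes L$, and exactness of $\iota_L$ finishes the argument.

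\textbf{Step 3.} Applying the criterion gives that $f$ is a closed immersion.

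The main obstacle is bookkeeping: one must check that the fibre functors are compatible, which holds by definition of $\omega^{\dagger}$, and that Step 2's passage through the $L$-linear structure is legitimate. Notably, the argument does not need full faithfulness of $\For^{\dagger}$ (\cite{kedlayafullfaithfull}). That input would matter only for the finer statement about which subobjects come from $\langle M^{\dagger}\rangle$.
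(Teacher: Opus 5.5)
Your proof is correct and follows the paper's argument: apply Deligne--Milne, Proposition 2.21(b), and verify the subquotient condition using exactness of $\iota_L$ together with surjectivity of the natural map $\iota_L(\For_L(N_L))\to N_L$. Your observation that full faithfulness of $\For^{\dagger}$ is not needed is also right. The identity $\iota_L(T(M))=\For^{\dagger}_L(\iota_L(T(M^{\dagger})))$ holds directly, because the $L$-action lives on the factor $L$ and base change to $\mathcal{E}$ commutes with $-\otimes_{\Q_p}L$, so the paper's appeal to full faithfulness at that step is superfluous.
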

\begin{proof}
We show that every object of $\langle{M}\rangle_L$ is a subquotient of an object of the form $\For^{\dagger}_L(X)$ where $X\in \langle M^{\dagger}\rangle_L$. Applying \cite{Milne_Deligne_tannakian} Proposition 2.21 (b) that proves the Lemma. Let  $W_L = (W, \rho)\in \langle M \rangle_L$. Then $\For_L(W_L)=W$ is a subquotient of $T(M).$ As $\iota_L$ is exact, we see that $\iota_L(\For_L(W_L))$ is a subquotient of $\iota_L(T(M))$. Using the fact that the natural homomorphism $\iota_L(\For_L(W_L))\rightarrow W_L$ is surjective (\cite{scaler_ext}, Remark 1.3.5), we see that $W_L$ is a subquotient of $\iota_L(T(M))$. As $\For^{\dagger}$ is fully faithful, any $L$ action on $T(M)$ lifts uniquely to an $L$ action on $T(M^{\dagger})$. Therefore, $\iota_L(T(M))=\For^{\dagger}_L (\iota_L (T^{\dagger}M))$ and the claim follows.  
\end{proof}
Let $\langle{M}\rangle_L^{\cst}$ denote the full Tannakian subcategory of $\langle{M}\rangle_L$ of constant $F$-isocrystals and let $G^{\cst}$ denote the associated monodromy group. Similarly define $(G^{\dagger})^{\cst}$ denote the associated monodromy group for $\langle M^{\dagger}\rangle_L^{\cst}$.
\begin{lem}\label{lem:constant}
  The natural morphism $h:G^{\cst}\rightarrow G^{\cst}$ is surjective. Moreover, the groups $G^{\cst}$ and $(G^{\dagger})^{\cst}$ are commutative.
\end{lem}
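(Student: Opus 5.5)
The map $h$ in the statement should be read as $h\colon G^{\cst}\rightarrow (G^{\dagger})^{\cst}$, induced by restricting $\For^{\dagger}_L$ to constant objects. I will prove surjectivity by showing that this restriction is an equivalence, and commutativity by showing that each constant category is generated by rank-one objects.

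\emph{Surjectivity of $h$.} By \cite{Milne_Deligne_tannakian} Proposition 2.21(a), $h$ is faithfully flat if the functor $\For^{\dagger}_L\colon \langle M^{\dagger}\rangle_L^{\cst}\rightarrow \langle M\rangle_L^{\cst}$ is fully faithful and its essential image is closed under subobjects. Full faithfulness follows from \cite{kedlayafullfaithfull} Theorem 1.1, since $L$-structures are preserved. For closure under subobjects, let $N$ be a constant object of $\langle M^{\dagger}\rangle_L$, so $N=N_0\otimes_{L^{\ur}}\mathcal{E}^{\dagger}$ with $N_0$ an $F$-isocrystal over $L^{\ur}$. Let $W\subset N\otimes\mathcal{E}$ be a sub-object in $\langle M\rangle_L^{\cst}$. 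Since $W$ is constant, $W=W_0\otimes\mathcal{E}$. The inclusion $W\hookrightarrow N\otimes\mathcal{E}$ is a morphism between constant $F$-isocrystals over $\mathcal{E}$. Its matrix is horizontal for the trivial connection, so its entries are killed by $d/du$ and therefore lie in $L^{\ur}$. Hence the inclusion descends to a morphism $W_0\rightarrow N_0$ over $L^{\ur}$. Then $W_0\otimes\mathcal{E}^{\dagger}\subset N$ is a sub-object in $\langle M^{\dagger}\rangle_L^{\cst}$ mapping to $W$, and it is itself a constant object. The same argument shows that every constant object over $\mathcal{E}$ comes from $\mathcal{E}^{\dagger}$. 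So the functor is in fact an equivalence, $h$ is an isomorphism, and in particular it is surjective.

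\emph{Commutativity.} Given the equivalence, it suffices to treat $G^{\cst}$. Each constant object is $N_0\otimes\mathcal{E}$, and base change defines a tensor functor from $F$-isocrystals over $L^{\ur}$ (with $L$-structure) to constant $F$-isocrystals. Because morphisms are horizontal and hence constant, this functor is fully faithful, and its image is closed under subquotients. Therefore $\langle M\rangle^{\cst}_L$ is equivalent to a finitely generated Tannakian subcategory of $F$-isocrystals over $L^{\ur}$. By the Dieudonn\'e–Manin theory over $L^{\ur}=W(k)[1/p]$ with $k$ finite, every such $F$-isocrystal is a successive extension of simple objects of pure slope. After the finite scalar extension, these objects should decompose into rank-one objects $L(\lambda)$ on which the linearised Frobenius $F^{[k:\F_p]}$ acts by a scalar.

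The step I expect to be the main obstacle is this last one: extensions need not split (a non-semisimple Frobenius would give a unipotent part), and the Tannakian group need not be a torus. I would argue directly instead. Over $\F_q$-type residue fields, the category of $F$-isocrystals over $L^{\ur}$ is equivalent to pairs consisting of a $K_0$-vector space with the linear operator $\Phi=F^{a}$, where $q=p^{a}$. Any Tannakian category of this kind generated by one object $(V,\Phi)$ has monodromy group equal to the Zariski closure of the subgroup generated by $\Phi$ (after passing to a fibre functor over $L$). This closure is the Zariski closure of a cyclic group, and hence commutative; this holds even when $\Phi$ is not semisimple. Matching the fibre functor here with the one restricted from $\omega$ requires care, but commutativity is independent of the choice of fibre functor, because any two choices become isomorphic after an fpqc extension of scalars, and the resulting groups are inner forms of each other, so commutativity is preserved. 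Since $h$ is an isomorphism, $(G^{\dagger})^{\cst}$ is commutative as well.
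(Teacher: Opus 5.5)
Your surjectivity argument (full faithfulness from \cite{kedlayafullfaithfull}, descending a constant sub-object because a horizontal map between trivial connections has entries in $L^{\ur}$, then \cite{Milne_Deligne_tannakian} Proposition 2.21(a)) is essentially the paper's proof. So is your commutativity argument: the group of a constant category is the Zariski closure of the cyclic group generated by the linearised Frobenius, and commutativity does not depend on the fibre functor.

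There is, however, one step that does not follow: ``the same argument shows that every constant object over $\mathcal{E}$ comes from $\mathcal{E}^{\dagger}$, so the functor is an equivalence and $h$ is an isomorphism.'' Essential surjectivity onto $\langle M\rangle_L^{\cst}$ needs more than the existence of the overconvergent object $W_0\otimes\mathcal{E}^{\dagger}$. You must show that it lies in $\langle M^{\dagger}\rangle_L$, i.e.\ that it is a subquotient of $T(M^{\dagger})$ in the overconvergent category. Knowing that $W=A/B$ is a subquotient of $T(M)$ over $\mathcal{E}$ gives nothing here, because $A$ and $B$ may be non-overconvergent. This happens, for instance, in \Cref{prop:unitroot_goodredn}, where $W$ is built from the unit-root sub-object $U$, which is not overconvergent. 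Your descent argument only handles sub-objects of an object already known to be in $\langle M^{\dagger}\rangle_L$. That $h$ is an isomorphism is exactly \Cref{cor:constovercon}, which the paper proves afterwards using the parabolicity theorem (\Cref{thm:parabolic}, via $\dagger$-hulls and the minimal slope property). It is the deep input of that section, not a formal consequence of full faithfulness.

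The lemma does not need this claim. Delete it, and in the commutativity part replace ``since $h$ is an isomorphism'' with ``since $h$ is faithfully flat, $(G^{\dagger})^{\cst}$ is a quotient of $G^{\cst}$''. A quotient of a commutative group is commutative. Alternatively, run your argument directly on $\langle M^{\dagger}\rangle_L^{\cst}$, which is what the paper does by viewing both groups as quotients of the group of the constant category. With that change the proof is complete.

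A smaller point: if $F$ is $p$-semilinear and $\Phi=F^a$ with $a>1$, then $(V,F)\mapsto(V,\Phi)$ is not an equivalence onto $L^{\ur}$-vector spaces with an automorphism. It is not full: the unit object has endomorphisms $\Q_p$, while its image has endomorphisms $L^{\ur}$. The identification does hold after extending scalars to a field containing $L^{\ur}$. There, the decomposition $V=\bigoplus_{i\in\Z/a}V_i$ under $L^{\ur}\otimes_{\Q_p}L^{\ur}$, with $F\colon V_i\to V_{i+1}$, gives an equivalence $(V,F)\mapsto(V_0,F^a|_{V_0})$ with $\Rep_{L^{\ur}}(\Z)$. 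Since commutativity is insensitive to scalar extension and to the choice of fibre functor, as you note, your conclusion is unaffected. The paper's identification with $\Rep_{\Q_p}(\Z)$ glosses over the same point.
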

\begin{proof}
The arguments are same as in \cite{marco_ambrosi} Proposition 2.2.4 and Corollary 2.3.6. By Theorem 1.1 in \cite{kedlayafullfaithfull}, we see that the forgetful functor $\For^{\dagger}$ restricted to $\langle{M^{\dagger}}\rangle^{\cst}$ is fully faithful.  Let $N^{\dagger}$ be a constant F-isocrystal in $\langle{M^{\dagger}}\rangle^{\cst}$. Then every subobject $W$ of $\For^{\dagger}(N^{\dagger})$ is constant and in particular overconvergent. Since $\For^{\dagger}$ is fully faithful, it comes from a constant subobject of $W^{\dagger}\subset N^{\dagger}$. By \cite{Milne_Deligne_tannakian} Proposition 2.21 (a), we see that $h$ is surjective. As in \cite{marco_monodromy} Theorem A.2.2 , we note that $\textbf{F-Isoc}(K)^{\cst}\simeq \Rep_{\Q_p}(\Z)$ as the data of a constant F-isocrystal over $K$ is an $L^{\ur}$-vector space with a semi-linear action of the frobenius, which the same as a $\Q_p$-vector space with a $\Q_p$-linear Frobenius endomorphism. Therefore the Tannakian group of $\textbf{F-Isoc}(K)^{\cst}$ is the pro-finite completion of $\Z$ and $G^{\cst}$ and $(G^{\dagger})^{\cst}$ are quotients of the group. In particular, they are commutative. 
\end{proof}

\subsection{The parabolicity theorem and an application}

The goal of this section is to prove \Cref{cor:subquotientoverconverget}. The global version of this result is studied in \cite{marco_ambrosi} Section $2$ and \cite{marcoparabolicity} Section $4$ using monodromy groups. We are indebted to these works and we make use of these arguments for the local analogue as well.

By the discussion in \Cref{slopefilunitroot}, we can define a $\otimes$-exact filtration $\omega^{\dagger}_n:\langle M^{\dagger}\rangle\rightarrow \Vector_L$ by defining $\omega_n(M^{\dagger})=\omega(M_n)$ where $0\subset M_0\subseteq M_1\subseteq\dots \subseteq M$ is the slope filtration of $M$ over $\mathcal{E}$. Let $P$ denote the subgroup of $G^{\dagger}=\Aut^{\tensor}(\omega^{\dagger})$ that preserve the filtration $\omega^{\dagger}_{\bullet}$ (\cite{saavedra} Proposition 2.2.5). Let $r$ be the lcm of the denominators of slopes of $M$ and $\mathbb{G}_m^{1/r}$ be the torus with character group $1/r\Z$. Then replacing $L$ by possibly a finite extension, we get a Newton cocharacter $\mu:\mathbb{G}_m^{1/r}\rightarrow G^{\dagger}$ acting on $G^{\dagger}$ by conjugation (\cite{saavedra}, Section 2.1.1).

In the particular case when ${M^{\dagger}}$ is semisimple, then (the identity component of) $G^{\dagger}$ is reductive and, $P$ is a parabolic subgroup of $G$. Notice that as $G$ preserves the filtration $\omega^{\dagger}_{\bullet}$, we see that $G\subseteq P$. 

\begin{rem}
We note that D'Addezio's remarkable results in \cite{marcoparabolicity}, proving Crew's parabolocity conjecture, show that in fact, $G=P$. Although the set up in \cite{marcoparabolicity} is purely for F-isocrystals on a variety over $\F_q$, the proof of this conjecture relies on showing that a certain ``minimal slope property'' holds for F-isocrystals on a variety over $\F_q$. As a first step, D'Addezio notices that this property holds for F-isocrystals over $\mathcal{E}$ and then the rest of the work is to ``algebraise the argument'' for the property to hold globally. This fact, along with the proof of Proposition 4.3.2 in \cite{marcoparabolicity} proves that $G=P$. For the sake of completeness of this article and the reader's convenience, we write the important results below even though the proof from the paper remains unchanged.
\end{rem}
We refer the reader to \cite{marcoparabolicity}, Section $4$.
\begin{defn}[\cite{marcoparabolicity}, Definition 4.1.1]
 Let $M$ be an overconvergent F-isocrystal over $\mathcal{E}$ and let $N$ be a sub F-isocrystal of $M$. We define the $\dagger$-hull $\overline{N}$ of $N$ to be the smallest overconvergent sub F-isocrystal of $M$ containing $N$.
\end{defn}
In \cite{marcoparabolicity}, Construction 4.2.3 constructs the $\dagger$-hull of a sub F-isocrystal $N$ in an ambient overconvergent F-isocrystal $M$. The definition and this construction make sense for the $L$-linear category $\langle{M}\rangle_L$ as well. Moreover, Proposition 4.2.2 proves that the first slope piece of $N$ is the same as that of its $\dagger$-hull $\overline{N}$. As the  slope filteration and the reverse slope filtration of F-isocrystals over a given base stays invariant under endomorphisms, the proof of Proposition 4.2.2 goes through when we work with the $L$-linear category as well.

\begin{thm}[\cite{marcoparabolicity}, Proposition 4.3.2]\label{thm:parabolic} We have $P=G\subset G^{\dagger}$. In particular, when $G^{\dagger}$ is reductive then $G$ is a parabolic subgroup of $G^{\dagger}$. 
\end{thm}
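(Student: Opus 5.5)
We have to show $P=G$ inside $G^{\dagger}$. One inclusion is already available: $G$ stabilises the slope filtration $\omega^{\dagger}_{\bullet}$, because the slope filtration of any object of $\langle M\rangle_L$ is functorial and $\otimes$-exact. Hence $G\subseteq P$. For the reverse inclusion, Chevalley's theorem (\cite{Milne_Deligne_tannakian}, Proposition 2.21 and its standard consequences) lets us characterise $G$ inside $G^{\dagger}$ as the stabiliser of lines. Concretely, there is an object $X^{\dagger}\in\langle M^{\dagger}\rangle_L$ and a rank one sub-object $D\subseteq \For^{\dagger}(X^{\dagger})$ in $\langle M\rangle_L$ such that $G$ is exactly the stabiliser in $G^{\dagger}$ of the line $\omega(D)\subseteq\omega^{\dagger}(X^{\dagger})$. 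It therefore suffices to prove that $P$ also stabilises $\omega(D)$.

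To show this we will use the $\dagger$-hull. Let $\overline{D}\subseteq X^{\dagger}$ be the $\dagger$-hull of $D$, which is the smallest overconvergent sub-object containing $D$. By \cite{marcoparabolicity}, Proposition 4.2.2 (adapted to the $L$-linear setting as explained above), the first slope piece of $D$ coincides with the first slope piece of $\overline{D}$. Since $D$ has rank one, $D$ is isoclinic, say of slope $s$. So $D$ equals the minimal-slope part $\overline{D}_{s}$ of the slope filtration of $\overline{D}$. Now $\overline{D}$ is an object of $\langle M^{\dagger}\rangle_L$, so $G^{\dagger}$ stabilises $\omega^{\dagger}(\overline{D})$. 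Moreover, $P$ preserves the slope filtration of every object of $\langle M^{\dagger}\rangle_L$, by $\otimes$-exactness of $\omega^{\dagger}_{\bullet}$ (\cite{saavedra}, Proposition 2.2.5). In particular $P$ stabilises $\omega(\overline{D}_s)=\omega(D)$. It follows that $P\subseteq G$, and hence $P=G$.

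Finally, suppose $G^{\dagger}$ is reductive. Then $P$ is the stabiliser of a $\otimes$-exact filtration that is split by the Newton cocharacter $\mu$. Such a stabiliser is the parabolic subgroup $P(\mu)$ (\cite{saavedra}, Section 2.1.1, after enlarging $L$ so that $\mu$ is defined), and so $G=P$ is parabolic.

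The step I expect to be the main obstacle is the claim that the first slope piece of $D$ agrees with that of its hull. This means checking that D'Addezio's construction of the $\dagger$-hull (Construction 4.2.3) works over $\mathcal{E}$, $\mathcal{E}^{\dagger}$ with $L$-coefficients. In particular it uses the minimal slope property for F-isocrystals over $\mathcal{E}$ and the full faithfulness of $\For^{\dagger}$ \cite{kedlayafullfaithfull}. A second point to watch is the choice of $D$: we need slope filtrations taken with the correct normalisation, either ascending or reverse, so that $D$ sits in the minimal-slope step.
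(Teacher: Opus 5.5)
Your proposal is correct and follows essentially the paper's argument: Chevalley's theorem realises $G$ as the stabiliser of a line $\omega(D)$, you pass to the $\dagger$-hull $\overline{D}$, and D'Addezio's Proposition 4.2.2 identifies $D$ with the minimal-slope piece of $\overline{D}$. The only cosmetic difference is in the last step: the paper writes $\omega(D)=\omega(\overline{D})\cap V^{\leq s}$ and shows $P$ stabilises both pieces, whereas you use directly that $P$ preserves the slope filtration of the object $\overline{D}\in\langle M^{\dagger}\rangle_L$, which amounts to the same thing by exactness of the slope filtration.
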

\begin{proof}
By Chevalley's theorem there exists an F-isocrystal $N^{\dagger}$ in $\langle M^{\dagger}\rangle_L$ and a rank one sub F-isocrystal $L\subset N$ such that $H$ is the stabiliser of the line $\omega(L)\subset \omega(N):=V$. We now show that $P$ stabilises $\omega(L)$. Let $\overline{L}$ be the $\dagger$-hull of $L\subset N$. Let $s$ be the slope of $L$ and write $V^{\leq s}\subseteq V$ for the subspace of $V$ of slope $\leq s$. By Proposition 4.2.2 in \cite{marcoparabolicity}, we know that the first slope pieces $L=L_1 = \overline{L}_1$ which imples $\omega(L)=\omega(\overline{L})\cap V^{\leq s}$. As $\overline{L}\subset N$ is overconvergent, the monodromy group $G^{\dagger}$ and hence, $P$ stabilises $\omega(\overline{L})$. Moreover $P$ also stabilises $V^{\leq s}$ as $N^{\dagger}\in \langle{M^{\dagger}}\rangle.$ This shows that $P$ stabilises $\omega(L)=\omega(\overline{L})\cap V^{\leq s}$. 
\end{proof}

\begin{cor}\label{cor:constovercon}(\cite{marcoparabolicity}, Proposition 4.3.5 and Corollary 5.1.3) Let $N\in \langle{M}\rangle$ be a constant overconvergent F-isocrystal. Then ${N^{\dagger}}\in \langle{M^{\dagger}}\rangle_L$.
\end{cor}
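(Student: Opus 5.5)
The plan is to argue Tannakianly, comparing $\langle M^{\dagger}\rangle_L$ with the larger category generated by $M^{\dagger}$ together with $N^{\dagger}$, as in \cite{marcoparabolicity} Proposition 4.3.5. Throughout I will assume, as in the discussion before \Cref{thm:parabolic}, that we are in the semisimple situation, so that the relevant overconvergent monodromy groups are reductive (at least on identity components). This assumption is what makes the parabolic-subgroup argument in the second step work.

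First I set up the groups. Let $N^{\dagger}$ be the constant overconvergent F-isocrystal with $N = \For^{\dagger}(N^{\dagger})$, and put $\mathcal{D}^{\dagger} := \langle M^{\dagger}\oplus N^{\dagger}\rangle_L$. Because $N\in\langle M\rangle$, we have $\langle \For^{\dagger}(M^{\dagger}\oplus N^{\dagger})\rangle_L = \langle M\rangle_L$. The fibre functor $\omega$ on $\langle M\rangle_L$ therefore induces a fibre functor on $\mathcal{D}^{\dagger}$; call its group $H^{\dagger}$. Inclusion of categories gives a faithfully flat map $\pi: H^{\dagger}\to G^{\dagger}$ (\cite{Milne_Deligne_tannakian} Proposition 2.21(a), since $\langle M^{\dagger}\rangle_L$ is a full subcategory closed under subquotients). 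Let $Q := \ker\pi$. The closed-immersion lemma, applied to $M^{\dagger}\oplus N^{\dagger}$, gives $G\hookrightarrow H^{\dagger}$, and the composite $G\hookrightarrow H^{\dagger}\to G^{\dagger}$ is the earlier embedding. By \Cref{thm:parabolic} applied to both ambient categories, $G$ is the parabolic subgroup $P_H$ of $H^{\dagger}$ and also the parabolic $P$ of $G^{\dagger}$.

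Next I show that $Q$ is trivial. The group $G$ acts faithfully on $\omega(M)$, and $Q$ acts trivially on $\omega(M)$; hence $Q\cap G = Q\cap P_H = 1$. The subgroup $Q$ is normal in $H^{\dagger}$, and $Q^{0}$ is then normal and reductive. A parabolic subgroup of $(H^{\dagger})^{0}$ meets every connected normal reductive subgroup in one of that subgroup's parabolics, which is nontrivial unless the subgroup is trivial. So $Q^{0}=1$ and $Q$ is finite. A finite normal subgroup of a connected group is central, and the centre lies in every parabolic. Hence $Q\subset P_H = G$, which forces $Q=1$, at least on identity components. Then $\pi$ is an isomorphism, every object of $\mathcal{D}^{\dagger}$ is a representation of $G^{\dagger}$, and so $N^{\dagger}\in\langle M^{\dagger}\rangle_L$. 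More concretely, $\langle M^{\dagger}\rangle_L\subset\mathcal{D}^{\dagger}$ is an equivalence because the groups agree.

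The main obstacle is component groups and reductivity. $H^{\dagger}$ need not be connected, and the parabolicity statement and the centrality argument are really statements about $(H^{\dagger})^{0}$. I would first try to handle $\pi_0$ via the constant parts. By \Cref{lem:constant} the groups $G^{\cst}$ and $(G^{\dagger})^{\cst}$ are commutative quotients of $\widehat{\Z}$, and $h$ is surjective. The idea is to show that $Q$ maps injectively into $(H^{\dagger})^{\cst}$, since it acts only through the constant object $N^{\dagger}$. Then I would use $Q\cap G=1$ together with surjectivity of $G^{\cst}\to (H^{\dagger})^{\cst}$, which follows from the same full-faithfulness argument as \Cref{lem:constant}, to kill $Q$ entirely.
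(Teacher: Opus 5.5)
Your setup matches the paper's: $H^{\dagger}$ for $\langle M^{\dagger}\oplus N^{\dagger}\rangle_L$, the surjection onto $G^{\dagger}$ with kernel $Q$, the observation $Q\cap G=1$, and $G=P_H$ from parabolicity (using $\langle M\oplus N\rangle_L=\langle M\rangle_L$). The gap is in how you kill $Q$.

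Your argument rests on a semisimplicity hypothesis that is not in the statement and is not available where the corollary is used. In \Cref{prop:unitroot_goodredn}, $M^{\dagger}$ is just the restriction of a (log) F-isocrystal to the punctured disc, and nothing makes $M^{\dagger}\oplus N^{\dagger}$ semisimple in the overconvergent category. Without reductivity of $H^{\dagger}$, the claim that the filtration stabiliser $P_H$ meets every nontrivial connected normal subgroup nontrivially is false. For instance, a normal $\mathbb{G}_a$ on which the Newton cocharacter acts only through weights excluded from $P_H$ meets $P_H$ trivially.

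Under your hypothesis the argument does close, and the component-group worry is harmless. A finite normal $Q$ is centralised by $(H^{\dagger})^{0}$, which contains the image of the Newton cocharacter, so $Q\subset P_H$. Your proposed $\pi_0$-repair, however, is insufficient as stated. Take $H^{\dagger}=(\Z/2)^2$, $G=\Z/2\times 0$, $Q=0\times\Z/2$, and $(H^{\dagger})^{\cst}=H^{\dagger}/\Delta$ with $\Delta$ the diagonal. Then $Q\cap G=1$, $Q\hookrightarrow (H^{\dagger})^{\cst}$ and $G\to (H^{\dagger})^{\cst}$ surjective all hold, yet $Q\neq 1$. So those conditions alone cannot force $Q=1$.

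The missing idea is already in your last paragraph, and it is exactly the paper's argument. Since $Q$ acts trivially on $\omega(M^{\dagger})$ and $\omega(M^{\dagger}\oplus N^{\dagger})$ is faithful, $Q$ injects into the monodromy group of the constant part, which is commutative by \Cref{lem:constant}. Because $Q$ is normal, each commutator $hqh^{-1}q^{-1}$ with $h\in H^{\dagger}$, $q\in Q$ lies in $Q$ and becomes trivial in that commutative quotient. Hence it is trivial, and $Q$ is \emph{central} in $H^{\dagger}$. In particular $Q$ commutes with the Newton cocharacter, so it preserves the slope grading and hence the slope filtration, i.e.\ $Q\subset P_H=G$. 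Together with $Q\cap G=1$ this gives $Q=1$. This uses neither reductivity nor connectedness, so you should drop the semisimplicity assumption and the reductive-group analysis and argue this way.
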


\begin{proof}
We show that the map $h$ in \Cref{lem:constant} is an isomorphism. 
Let $\langle N\rangle$ be the tensor generator of $\langle{M}\rangle^{\cst}$. As $N$ is constant, we see that $\langle N^{\dagger} \rangle\rightarrow\langle N\rangle$ is an equivalence of categories. We will show that $N^{\dagger}\in \langle M^{\dagger}\rangle$. Consider the Tannakian category $\langle M^{\dagger}\oplus N^{\dagger}\rangle$ and write $H^{\dagger}$ for its monodromy group. As $\langle M^{\dagger}\rangle\subseteq \langle M^{\dagger}\oplus N^{\dagger}\rangle$,  
  we get a surjection $j: H^{\dagger}\rightarrow G^{\dagger}$ between the respective monodromy groups. By construction, as $\langle M \oplus N\rangle = \langle M\rangle$ the map $j$ is an isomorphism restricted to $G$. Suppose $\ker j$ is not trivial. We have a surjective map $H^{\dagger}\rightarrow G^{\cst}.$ Note that $\Ker j$ does not intersect the kernel of this map as $\omega^{\dagger} (M^{\dagger}\oplus N^{\dagger})$ is a faithful representation of $H^{\dagger}$. Therefore, the action of the Newton cocharacter on $\ker j\subset G^{\cst}$ is trivial as $G^{\cst}$ is commutative. This shows that $\ker j\subset G$, but that is not possible. So it must be trivial. 
\end{proof}

The above result, together with Galois descent allows us to deduce the result we need:
\begin{cor}\label{cor:subquotientoverconverget}
Let $M$ be an overconvergent F-isocrystal over $\mathcal{E}$. Let $N \in \langle{M}\rangle$ be an overconvergent constant F-isocrystal. Then $N^{\dagger} \in \langle{M^\dagger}\rangle$.
\end{cor}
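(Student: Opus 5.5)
The plan is to descend \Cref{cor:constovercon} from the $L$-linear categories back to the $\Q_p$-linear ones. By \Cref{lem:neutraltannakian} there is a finite extension $L/\Q_p$, which we may enlarge to be Galois, over which $\langle M\rangle_L$ and $\langle M^{\dagger}\rangle_L$ are neutral. All the monodromy groups $G\subset G^{\dagger}$, $G^{\cst}$, $(G^{\dagger})^{\cst}$ are defined there, and \Cref{cor:constovercon} applies. Our $N$ is a constant overconvergent object of $\langle M\rangle$, and we may also view it as the object $\iota_L(N)=N\otimes_{\Q_p}L$ of $\langle M\rangle_L$. Let $N^{\dagger}$ denote the unique overconvergent model of $N$ over $\mathcal{E}^{\dagger}$, which is unique up to unique isomorphism because $\For^{\dagger}$ is fully faithful (\cite{kedlayafullfaithfull}, Theorem 1.1). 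Then \Cref{cor:constovercon} gives $\iota_L(N^{\dagger})\in\langle M^{\dagger}\rangle_L$. Concretely, $\iota_L(N^{\dagger})$ is a subquotient, in the $L$-linear category, of $\iota_L(T(M^{\dagger}))$ for some finite sum of tensor constructions $T(M^{\dagger})$.

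The next step is to forget the $L$-structure. The functor $\For_L$ is exact and sends $\iota_L(X)$ to $X^{\oplus[L:\Q_p]}$. Applying it, we see that $N^{\dagger,\oplus d}$ is a subquotient of $T(M^{\dagger})^{\oplus d}$ in $\textbf{F-Isoc}^{\dagger}(K)$, where $d=[L:\Q_p]$. Now $N^{\dagger}$ is a direct summand, hence a subquotient, of $N^{\dagger,\oplus d}$, and $T(M^{\dagger})^{\oplus d}$ is again a sum of tensor constructions in $M^{\dagger}$. Since $\langle M^{\dagger}\rangle$ is a full abelian subcategory closed under subquotients and finite sums, this gives $N^{\dagger}\in\langle M^{\dagger}\rangle$.

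If one instead wants to see the object itself descend, the alternative is Galois descent as recalled above. The object $\iota_L(N^{\dagger})$ carries canonical descent isomorphisms $\varphi_\sigma$ for $\sigma\in\Gal(L/\Q_p)$. Full faithfulness of $\For^{\dagger}$ ensures these coincide with the descent data on $\iota_L(N)$. The descended object is therefore $N^{\dagger}$, and it lies in the descended subcategory $\langle M^{\dagger}\rangle$, because the subcategory $\langle M^{\dagger}\rangle_L$ is stable under the twists $(\cdot)^{\sigma}$: the twists fix $\iota_L(T(M^{\dagger}))$.

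The main obstacle is bookkeeping. We must make sure that the object to which \Cref{cor:constovercon} is applied really is $\iota_L(N)$, which is constant and overconvergent in $\langle M\rangle_L$. We must also make sure that ``overconvergent model'' is unambiguous, which is exactly where full faithfulness of $\For^{\dagger}$ is used. Once that is settled, the forgetful-functor argument is formal and avoids any subtlety about descending fibre functors.
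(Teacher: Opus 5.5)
Your proof is correct and follows the paper's argument. You apply \Cref{cor:constovercon} to get $N^{\dagger}\otimes_{\Q_p}L\in\langle M^{\dagger}\rangle_L$, and then remove the $L$-structure by showing that a finite direct sum of copies of $N^{\dagger}$ is a subquotient of tensor constructions on $M^{\dagger}$. The one difference is in that last step: you use the exact forgetful functor $\For_L$ together with $\For_L\circ\iota_L\cong(-)^{\oplus[L:\Q_p]}$, whereas the paper sums over the Galois twists $T(M^{\dagger})^{\sigma}$ and invokes descent; your route is slightly cleaner and does not need $L/\Q_p$ to be Galois.
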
 
\begin{proof}

Let $N$ be any constant overconvergent  isocrystal in $\langle{M}\rangle$. Write $N^{\dagger}$ for its exension to $\mathcal{E}^{\dagger}$. Then by \Cref{cor:constovercon}, $N^{\dagger}_L:=N^{\dagger}\otimes_{\Q_p}L\in \langle M^{\dagger}\rangle_L$. We now need to descend the scalers to $\Q_p$. Let $N^{\dagger}_L$ be a subquotient of $\oplus_i {M^{\dagger}}^{n_i}\otimes ({M^{\dagger}}^{\vee})^{m_i}=T(M^{\dagger})$. As $N^{\dagger}$ is defined over $\Q_p$, we see that $(N^{\dagger}_L)^{\sigma}\simeq N^{\dagger}$. Therefore, $(N^{\dagger}_L)^{\oplus m}$ is a subquotient of $\oplus_{\sigma\in \Gal(L/K)}T(M^{\dagger})^{\sigma}$ where $m=|\Gal(L/K)|$. By Galois descent, $(N^\dagger)^{\oplus m}\in \langle{M^{\dagger}}\rangle$ and therefore $N^{\dagger}\in \langle{M^{\dagger}}\rangle$.
\end{proof}

\section{Preliminary results on log F-isocrystals}

In this section, we prove some useful results, especially  \Cref{prop:unitroot_goodredn} that is key to proving both the finiteness results. 

\begin{rem}
In case the reader is first interested in reading the argument for \Cref{thm:mainthm}, they may skip directly to \Cref{prop:unitroot_goodredn}. All the results below leading to \Cref{prop:unitroot_goodredn} are extensions of the case when $\nabla_{\log}$ is regular, which are studied in \cite{ambrus_p_adic_monodromy} Section 2.
\end{rem}
\begin{lem}\label{lem:basishorizontal}
Let $(M, F, \nabla_{\log})$ be a log-F-isocrystal over $\textbf{F-Isoc}_{\log}^{\ddagger}(\o_K)$. Then there exists a basis of horizontal sections of $(M,F,\nabla_{\log})$ over $\mathcal{R}_+[\log u]$.
\end{lem}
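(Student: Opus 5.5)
We use the logarithmic Dwork trick, \Cref{prop:redmodtlog}, to reduce to a connection with constant nilpotent residue, and then solve that connection explicitly using the formal variable $\log u$. By \Cref{prop:redmodtlog}, and more precisely by \cite{kedlaya_book}, Corollary 15.2.6 as used in its proof, we may choose a basis $e_1,\dots,e_r$ of $M$ over $\mathcal{R}_+$ in which
\[
\nabla_{\log} = d + N_0\,\frac{du}{u},
\]
where $N_0$ is a constant nilpotent matrix with entries in $L^{\ur}$. In this basis $F$ is also given by a constant matrix $F_0$, satisfying $F_0 N_0 = q N_0 F_0$. The rest of the argument is a computation in $M\otimes_{\mathcal{R}_+}\mathcal{R}_+[\log u]$, with $\nabla_{\log}$ extended by the Leibniz rule and $d(\log u)=du/u$.

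Next we write down the horizontal sections. Since $N_0$ is nilpotent, say $N_0^{m}=0$, the expression
\[
\exp(-N_0\log u)=\sum_{k=0}^{m-1}\frac{(-1)^k}{k!}N_0^k(\log u)^k
\]
is a finite sum. It is therefore a well-defined matrix with entries in $L^{\ur}[\log u]\subset \mathcal{R}_+[\log u]$. Set $f_i := \exp(-N_0\log u)\, e_i$; in matrix notation, the columns of $\exp(-N_0\log u)$ give the coordinates of the $f_i$.

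We check horizontality directly. Writing a section as $v = Ae$ with $A$ a vector over $\mathcal{R}_+[\log u]$, we have
\[
\nabla_{\log}(v) = \Bigl(u\frac{dA}{du} + N_0A\Bigr)\frac{du}{u}.
\]
Since $N_0$ commutes with its own exponential, for $A=\exp(-N_0\log u)$ we get
\[
u\frac{d}{du}\exp(-N_0\log u) = -N_0\exp(-N_0\log u),
\]
so $\nabla_{\log}(f_i)=0$. The matrix $\exp(-N_0\log u)$ is unipotent, with inverse $\exp(N_0\log u)$, which again has entries in $\mathcal{R}_+[\log u]$. Hence the $f_i$ form a basis of $M\otimes\mathcal{R}_+[\log u]$ over $\mathcal{R}_+[\log u]$.

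The main obstacle is the first step, namely producing a basis in which the connection matrix is constant; this is exactly Kedlaya's result quoted in \Cref{prop:redmodtlog}. One subtlety deserves care: the hypothesis there is that the residue has no two eigenvalues differing by a nonzero integer, and for nilpotent $N_0$ this holds automatically. If one also wants the horizontal basis to be compatible with $F$, one can check that $F$ maps the horizontal sections into themselves: using $\sigma(\log u)=q\log u$ and $F_0N_0=qN_0F_0$, one gets $F_0\exp(-N_0 q\log u)=\exp(-N_0\log u)F_0$. So $F$ acts on the $L^{\ur}$-span of the $f_i$ through the constant matrix $F_0$. The statement as given, however, only requires the existence of the horizontal basis.
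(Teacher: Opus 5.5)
Your proof is correct and follows the paper's route. Both arguments use the logarithmic Dwork trick to reach $\nabla_{\log}=d+N_0\,du/u$ with $N_0$ constant and nilpotent, and then solve over $\mathcal{R}_+[\log u]$. The only difference is that you write the horizontal basis in closed form as the columns of $\exp(-N_0\log u)$, so the transition matrix even lies in $L^{\ur}[\log u]$; the paper instead triangularises $N_0$ and builds the same sections by iterated integration.

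There is one slip, in your optional Frobenius remark. The identity $F_0\exp(-q\,\sigma(N_0)\log u)=\exp(-N_0\log u)F_0$ follows from $N_0F_0=qF_0\,\sigma(N_0)$, which is what horizontality of $F$ gives in your column convention. It does not follow from $F_0N_0=qN_0F_0$ as you cite it, which would produce $\exp(-q^2N_0\log u)$ on the right. This plays no role in the lemma.
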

\begin{proof}
We provide a sketch of the proof. (See also \cite{kedlaya_book}, Proposition 16.4.3). As in Proposition \cref{prop:redmodtlog}, by the logarithmic analog of Dwork's trick, we can find a basis $e_1,\dots, e_n$ of $M$ such that $\nabla_{\log} = d+N_0 du/u$ such that $N_0 = (N_{ij}(u))$ is an upper triangular nilpotent matrix with entries in $L$.  Let $e_1,\dots , e_r$ be such that $\nabla_{\log}(e_i) = 0$ for $1\leq i\leq r$. We define inductively a new horizontal basis of $M\otimes_{\mathcal{R}_+} \mathcal{R}_+[\log u]$ as follows. Let $e_i= w_i$ for $1\leq i\leq r$. For $i>r$, let $\nabla_{\log}(e_{i}) = \sum_{j=1}^{i-1} N_{ji}(u)e_{j}\otimes du/u$. For $i>r$, define $w_{i} := e_{i}-\sum_{j=1}^{i-1}H_{ji} e_{j}$ where $H_{ji}\in \mathcal{R}_+[\log u]$ are defined iteratively from the following relation: For a fixed $i$, and $1\leq j \leq i-1$, $d H_{ji}/du =(N_{ji}-N_{j,i-1}H_{j+1,i}-\dots - N_{j,1}H_{i-1,i})/u$. We note that for $k\in \Z$ and $m>0$, the expression  $u^k\log u^m$ has an antiderivative in $\mathcal{R}_+[\log u]$ and hence, $H_{ij}$ are well defined. We then have $\nabla_{\log}(w_{i}) =0$. This gives a horizontal basis of $M\otimes_{\mathcal{R}_+}\mathcal{R}_+[\log u]$ for the logarithmic connection $\nabla_{\log}$. 
\end{proof}
Let $(M_1,\nabla_{1}, F_1)$ and $(M_2, \nabla_{2}, F_2)$ be two log F-isocrystals over $R$. We can equip the set of $R$-linear maps $\Hom_R(M_1,M_2)$ with a structure of a log F-isocrystal by defining the connection as $(\nabla_{\log,\Hom}(f))(m) = \nabla_2(f(m))- (f\otimes \Id)\nabla_1(m)$. Since we're working with F-isocrystals, the Frobenii are invertible maps which helps us define the Frobenius $F_{\hom}(f):= F_2\cdot f\cdot F_1^{-1}$. We write $\underline{\Hom}(M_1,N_1)$ to denote this log-$F$-isocrystal.

 \begin{lem}\label{lem:homeq}
Let $M_1$ and $M_2$ be log isocrystals over $R$ as above. Then there is an isomorphism between the $L$-vector spaces 
$$\Hom_{F,\nabla_{\log}}(M_1,M_2)\simeq H^0(\underline{\Hom}(M_1,M_2)):=(\underline{\Hom}(M_1,M_2))^{F_{\Hom}=1,\nabla_{\log,\Hom}=0}$$
 \end{lem}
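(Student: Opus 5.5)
The plan is to check directly that the two conditions defining $H^0(\underline{\Hom}(M_1,M_2))$ are exactly the two conditions defining a morphism of log F-isocrystals. Both sides are subsets of $\Hom_R(M_1,M_2)$, so the isomorphism is the identity map on underlying $R$-linear maps, and it is clearly $L$-linear. (Here the constants are the $\sigma$-fixed scalars.) It therefore suffices to show that an $R$-linear map $f:M_1\to M_2$ commutes with the Frobenii and the log connections if and only if $F_{\Hom}(f)=f$ and $\nabla_{\log,\Hom}(f)=0$.

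For the connection, I will use the definition
\[
(\nabla_{\log,\Hom}(f))(m)=\nabla_2(f(m))-(f\otimes \Id)\nabla_1(m).
\]
It vanishes for all $m\in M_1$ exactly when $\nabla_2\circ f=(f\otimes\Id)\circ\nabla_1$, which is the horizontality condition on a morphism. Before this, I will check that $\nabla_{\log,\Hom}$ is a well-defined log connection on $\underline{\Hom}(M_1,M_2)$, that is, that the displayed expression is $R$-linear in $m$. For $r\in R$, the Leibniz rule gives $\nabla_2(rf(m))=r\nabla_2(f(m))+f(m)\otimes d_{\log} r$, and the same term $f(m)\otimes d_{\log}r$ comes from $(f\otimes\Id)\nabla_1(rm)$, so the two cancel.

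For the Frobenius, $F_{\Hom}(f)=F_2\circ f\circ F_1^{-1}$. Since $F_1$ is bijective, $F_{\Hom}(f)=f$ is equivalent to $F_2\circ f=f\circ F_1$, which is Frobenius compatibility. I also need $F_2\circ f\circ F_1^{-1}$ to be $R$-linear, and this holds because $F_1^{-1}$ is $\sigma^{-1}$-semilinear while $F_2$ is $\sigma$-semilinear. The compatibility of $F_{\Hom}$ with $\nabla_{\log,\Hom}$ with respect to $d\sigma_{\log}$ is a routine check, and it is only needed to know that $\underline{\Hom}(M_1,M_2)$ is a log F-isocrystal, not for the bijection itself.

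No step is a real obstacle; the argument is a direct unwinding of the definitions. The one point needing care is the semilinear bookkeeping in $F_{\Hom}$. When $R$ is $\mathcal{E}_+$ or $\mathcal{R}_+$, $\sigma$ is not surjective on $R$, so $F_1^{-1}$ has to be understood through the linearized Frobenius $\sigma^*M_1\xrightarrow{\sim}M_1$. With that convention, $F_{\Hom}$ is defined through the isomorphism $\sigma^*\underline{\Hom}\cong\underline{\Hom}(\sigma^*M_1,\sigma^*M_2)$, and the equivalence $F_{\Hom}(f)=f\iff F_2 f=fF_1$ goes through verbatim.
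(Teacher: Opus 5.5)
Your proposal is correct and follows the paper's proof: both identify $F_{\Hom}(f)=f$ with $F_2 f = f F_1$, and $\nabla_{\log,\Hom}(f)=0$ with $\nabla_2 f=(f\otimes\Id)\nabla_1$, inside $\Hom_R(M_1,M_2)$. Your extra checks are points the paper leaves implicit: that $\nabla_{\log,\Hom}(f)$ is $R$-linear, and that $F_1^{-1}$ should be read via the linearized Frobenius $\sigma^*M_1\xrightarrow{\sim}M_1$ when $\sigma$ is not surjective on $R$.
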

 \begin{proof}
Let $f$ be an element of $\Hom_{F,\nabla_{\log}}(M_1,M_2)$. Then commutativity with the Frobenii of $M_1$ and $M_2$ is the same as $F_2\cdot f= f\cdot F_1$, that is $F_2\cdot f \cdot F_1^{-1}=f$. This is exactly the condition that $F_{\Hom}(f)=f$. The commutativity with the connection implies that $\nabla_2 \cdot f = (f\otimes \Id)\cdot \nabla_1$ which is exactly the condition that $\nabla_{\log,\Hom}=0$.
 \end{proof}
 We use \Cref{lem:basishorizontal} and \Cref{lem:homeq} to prove the following proposition, which is the local version of Theorem 7.3 in \cite{kedlaya_notes}. When $\nabla_{\log}$ is regular, the argument is the same and works with $\mathcal{R}_+$ itself in place of $\mathcal{R}[\log u]$ using the regular Dwork's trick as in the proof of \cite{kedlaya_book}, Theorem 8.7.1.

\begin{prop}\label{prop:extfullyfaithful}
Let $M_1$ and $M_2$ be two log F-isocrystals over $\mathcal{E}_+$ in $\textbf{F-Isoc}_{\log}(S)$. Let $M_1^{\dagger}$ and $M_2^{\dagger}$ denote their extension to $\mathcal{E}^{\dagger}$. Then the forgetful map: $$\Hom_{F,\nabla_{\log}} (M_1,M_2)\rightarrow \Hom_{F,\nabla}(M_1^{\dagger}, M_2^{\dagger})$$ is an isomorphism.
\end{prop}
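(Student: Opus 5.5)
By \Cref{lem:homeq}, applied both to the log F-isocrystal $\underline{\Hom}(M_1,M_2)$ over $\mathcal{E}_+$ and to its extension $\underline{\Hom}(M_1^{\dagger},M_2^{\dagger})=\underline{\Hom}(M_1,M_2)\otimes\mathcal{E}^{\dagger}$, the map in question is the restriction map
$$H^0(\underline{\Hom}(M_1,M_2))\rightarrow H^0(\underline{\Hom}(M_1,M_2)\otimes\mathcal{E}^{\dagger}).$$
So it suffices to prove the following. For a single log F-isocrystal $M$ over $\mathcal{E}_+$, every element of $M\otimes\mathcal{E}^{\dagger}$ that is Frobenius-fixed and killed by $\nabla=\nabla_{\log}\otimes\mathcal{E}^{\dagger}$ already lies in $M$. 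Away from $u=0$ the connections agree, since $du/u$ is invertible.

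Injectivity is clear, because $\mathcal{E}_+\to\mathcal{E}^{\dagger}$ is injective and $M$ is free.

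For surjectivity I pass to the Robba ring. Let $v\in M\otimes\mathcal{E}^{\dagger}$ be horizontal and Frobenius-fixed, and view it in $M\otimes\mathcal{R}\subset M\otimes\mathcal{R}[\log u]$.

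By \Cref{lem:basishorizontal}, $M\otimes\mathcal{R}_+[\log u]$ has a horizontal basis $w_1,\dots,w_n$. This basis stays horizontal over $\mathcal{R}[\log u]$. Writing $v=\sum a_iw_i$, horizontality gives $\sum da_i\, w_i=0$, so each $a_i$ is a horizontal element of $\mathcal{R}[\log u]$. Such an element is constant: the kernel of $d$ on $\mathcal{R}[\log u]$ is $L^{\ur}$. To see this, expand in powers of $\log u$ and compare top-degree terms; on $\mathcal{R}$ itself the kernel of $d$ is $L^{\ur}$, and $u^{-1}$ has no antiderivative in $\mathcal{R}$. Hence $v$ lies in $M\otimes\mathcal{R}_+[\log u]$.

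Next I use the Frobenius to remove the $\log u$ terms. By the proof of \Cref{prop:redmodtlog}, choose the Dwork-trick basis $e_i$ in which $\nabla=d+N_0\,du/u$ and the Frobenius matrix $\Phi$ is constant with $N_0\Phi=q\Phi\sigma(N_0)$. Then the horizontal sections are exactly $\exp(-N_0\log u)\cdot c$ with $c\in (L^{\ur})^n$. Since $v\in M\otimes\mathcal{R}$ has no $\log u$, comparing coefficients of powers of $\log u$ forces $N_0c=0$, so $v=\sum c_ie_i$. Thus $v\in M\otimes\mathcal{R}_+$, and $v$ is horizontal for $\nabla_{\log}$ there.

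Finally I descend from $\mathcal{R}_+\cap\mathcal{E}^{\dagger}$ to $\mathcal{E}_+$. Over $\mathcal{R}_+$ the coordinates of $v$ in the basis $e_i$ are constants. I express the $e_i$ in terms of an $\mathcal{E}_+$-basis of $M$ via the change-of-basis matrix from \cite{kedlaya_book}, Corollary 15.2.6. The coordinates of $v$ in the $\mathcal{E}_+$-basis of $M$ are then elements of $\mathcal{R}_+\cap\mathcal{E}^{\dagger}$. That intersection consists of power series with $W(k)[1/p]$-coefficients, bounded in $p$-adic valuation, converging on the open disc; this is exactly $\mathcal{E}_+$, since $\mathcal{E}^{\dagger}$ has bounded coefficients. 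So $v\in M$, and it is Frobenius-fixed and horizontal there, which proves surjectivity.

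I expect the main obstacle to be this last descent. The change-of-basis matrix to the Dwork basis a priori only has entries in $\mathcal{R}_+$ and need not be bounded, so the argument must work with the coordinates of $v$ in an $\mathcal{E}_+$-basis directly. The point to check carefully is that $\mathcal{R}_+\cap\mathcal{E}^{\dagger}=\mathcal{E}_+$ inside $\mathcal{R}$, following the regular case in \cite{kedlaya_book}, Theorem 8.7.1.
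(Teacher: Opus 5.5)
Your proposal is correct and follows essentially the paper's argument. You reduce via \Cref{lem:homeq} to horizontal sections of $\underline{\Hom}(M_1,M_2)$, expand in the horizontal basis of \Cref{lem:basishorizontal} over $\mathcal{R}_+[\log u]$ with constant coefficients, and intersect with $\mathcal{E}^{\dagger}$ to land in $\mathcal{E}_+$. Your intermediate Dwork-basis step removing $\log u$ is harmless but not needed, because the coordinates of $v$ in an $\mathcal{E}_+$-basis of $M$ already lie in $\mathcal{E}^{\dagger}\cap\mathcal{R}_+[\log u]=\mathcal{E}^{\dagger}\cap\mathcal{R}_+=\mathcal{E}_+$.
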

\begin{proof}
The map is injective under extension of scalers. To  show surjectivity, we consider the log $F$-isocrystal $M':=\underline{\Hom}(M_1, M_2)$ over $\mathcal{E}_+$. By Lemma \ref{lem:homeq}, we need to show any horizontal section of $M'\otimes_{\mathcal{E}_+} \mathcal{E}^{\dagger}$ lifts to a horizontal section of $M'$. By \Cref{lem:basishorizontal}, we can find a bases $e_1,\dots ,e_r$ of horizontal vectors of $M'\otimes_{\mathcal{E}_+}\mathcal{R}_+[\log u]$. As $\mathcal{R}_+\subset \mathcal{R}$, the same set of horizontal vectors trivialise $M'\otimes_{\mathcal{E}_+}\mathcal{R}[\log u]$. Let $s$ be a horizontal section of $M'\otimes_{\mathcal{E}_+} \mathcal{E}^{\dagger}.$ Then $s$ viewed as a horizontal section of $M'\otimes_{\mathcal{E}_+}\mathcal{R}[\log u]$ can written as $c_1e_1+\dots +c_re_r$. Since $s$ is trivial, we see that the $c_i$ have to be constant for all $1\leq i\leq r$. Hence, $s\in \mathcal{R}_+[\log u]$ which implies that $s\in \mathcal{R}_+[\log u]\cap \mathcal{E}^{\dagger}=\mathcal{E}_+$. 
\end{proof}
We will also need the following results.

\begin{lem}\label{lem:nilbasis}
Let $A^{\dagger}\subseteq M^{\dagger}$ over $\mathcal{E}^{\dagger}$ be a subobject of $M^{\dagger}$. Suppose $M^{\dagger}\otimes_{\mathcal{E^{\dagger}}}\mathcal{R}$ has a basis such that the connection $\nabla_{\log} = d+N_0 du/u$ with $N_0$ nilpotent with entries in $L^{\ur}$ then so does $A^{\dagger}\otimes_{\mathcal{E}^\dagger}\mathcal{R}$. 
\end{lem}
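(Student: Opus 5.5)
The plan is to recast the hypothesis as a statement about horizontal sections over $\mathcal{R}[\log u]$, then pass that property to the subobject. The argument is the one behind \Cref{lem:basishorizontal} and \Cref{prop:extfullyfaithful}, run in reverse.

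Write $M_{\mathcal{R}}:=M^{\dagger}\otimes_{\mathcal{E}^{\dagger}}\mathcal{R}$, and let $e_1,\dots,e_n$ be the given basis with $\nabla=d+N_0\,du/u$, where $N_0$ is nilpotent with entries in $L^{\ur}$. The solution matrix $\exp(-N_0\log u)$ is a polynomial in $\log u$ because $N_0$ is nilpotent. Applying it to the $e_i$ produces a basis $w_1,\dots,w_n$ of $M_{\mathcal{R}}\otimes\mathcal{R}[\log u]$ consisting of horizontal sections. Let $V:=\ker\nabla$ on $M_{\mathcal{R}}\otimes\mathcal{R}[\log u]$; its elements are the $L^{\ur}$-linear combinations of the $w_i$. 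The operator $N_0$ acts on $V$ and is recovered from the logarithmic grading: $V$ carries the nilpotent endomorphism $\partial/\partial(\log u)$, and sending a section to its $(\log u)^0$-coefficient identifies $V\cong L^{\ur}e_1\oplus\dots\oplus L^{\ur}e_n$, intertwining $\partial/\partial(\log u)$ with $-N_0$. Conversely, a free $\mathcal{R}$-module with connection has a basis of the required form exactly when its horizontal sections over $\mathcal{R}[\log u]$ span it over $\mathcal{R}[\log u]$, i.e.\ when it is \emph{unipotent}. Under this recasting, the statement becomes: a subobject of a unipotent $\nabla$-module over $\mathcal{R}$ is unipotent.

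For the subobject, put $A_{\mathcal{R}}:=A^{\dagger}\otimes\mathcal{R}$, which is a saturated $\nabla$-stable submodule of $M_{\mathcal{R}}$, and let $V_A:=V\cap (A_{\mathcal{R}}\otimes\mathcal{R}[\log u])$. The space $V_A$ is stable under $\partial/\partial(\log u)$, hence under $N_0$. Its $(\log u)^0$-parts therefore span an $N_0$-stable $L^{\ur}$-subspace $W\subseteq\bigoplus L^{\ur}e_i$ of dimension $\dim V_A$. The $\mathcal{R}$-span of $W$ lies in $A_{\mathcal{R}}$, since a horizontal section in $A_{\mathcal{R}}[\log u]$ has all its $\log$-coefficients in $A_{\mathcal{R}}$. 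That span is a free submodule on which the connection is $d+N_0|_W\,du/u$, which is of the desired shape.

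The remaining step, which I expect to be the main obstacle, is to show $\dim_{L^{\ur}}V_A=\operatorname{rank}A_{\mathcal{R}}$, so that the span of $W$ equals $A_{\mathcal{R}}$. I would try an induction on the unipotent filtration. Let $M^{(1)}:=\mathcal{R}\cdot\ker N_0$; its intersection with $A_{\mathcal{R}}$ is a $\nabla$-submodule of a trivial module $(\ker N_0)\otimes\mathcal{R}$. A $\nabla$-submodule of a trivial connection over the Robba ring is trivial: it corresponds to a subspace of $L^{\ur}$-constants, using that the horizontal sections of $\mathcal{R}$ are exactly $L^{\ur}$, together with the standard fact that rank-one subobjects of $\mathcal{R}^n$ with $d$ are spanned by constants. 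This claim should follow from the exactness of unipotent objects for subquotients in Kedlaya's book, \cite{kedlaya_book}, Chapter 15 (or 16.4), which I would cite directly. The quotient of $A_{\mathcal{R}}$ by $A_{\mathcal{R}}\cap M^{(1)}$ then embeds into $M_{\mathcal{R}}/M^{(1)}$, which is again of the form $d+\overline{N_0}\,du/u$, and induction on $n$ applies. Since unipotent modules are closed under extensions, $A_{\mathcal{R}}$ is unipotent. The dimension count follows, and $W$ gives the required basis with nilpotent constant residue $N_0|_W$.
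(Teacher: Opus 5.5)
Your proof is correct, and its skeleton is the paper's. The paper also filters $M^{\dagger}\otimes_{\mathcal{E}^{\dagger}}\mathcal{R}$ by the filtration coming from $N_0$ with trivial graded pieces, and intersects it with $A^{\dagger}\otimes_{\mathcal{E}^{\dagger}}\mathcal{R}$. It then uses that a $\nabla$-submodule of a trivial module over $\mathcal{R}$ is trivial (\cite{ambrus_p_adic_monodromy}, Lemma 2.12), which is exactly your base case.

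The two arguments diverge in how the constant basis is assembled:
\begin{itemize}
\item The paper lifts horizontal bases of the graded pieces $A_i/A_{i-1}$ and corrects the lifts by hand. It solves $u\,db/du+N|_{A_i}b=a(u)-a(0)$ coefficient by coefficient, so that the connection matrix becomes its constant term.
\item You turn the filtration into unipotence over $\mathcal{R}[\log u]$. This uses extension-closure, which rests on the same antiderivatives in $\mathcal{R}[\log u]$ as \Cref{lem:basishorizontal}. You then read off the basis as the $(\log u)^0$-components of horizontal sections.
\end{itemize}

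Your version gives a sharper and more canonical output. The new basis spans an $N_0$-stable subspace $W$ of the original $L^{\ur}$-span of $e_1,\dots,e_n$, and the new residue is literally $N_0|_W$. Moreover, $F$ preserves $V_A$ and respects $(\log u)^k$-components, since $\sigma(\log u)=q\log u$, so $W$ is also Frobenius-stable. That is what \Cref{prop:pal2} actually needs when it produces $A'$ over $\mathcal{R}_+$. The paper's correction yields only some constant nilpotent matrix, with no direct control over Frobenius or its relation to $N_0$, but it stays within explicit power-series manipulations.

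Two points need tightening.
\begin{itemize}
\item The equality $\dim_{L^{\ur}}V_A=\mathrm{rank}\,A_{\mathcal{R}}$ does not by itself give $\mathcal{R}W=A_{\mathcal{R}}$ over the non-Noetherian ring $\mathcal{R}$, because full-rank free submodules can be proper. Argue directly instead. Each $v\in V_A$ equals $\sum_k\frac{(-\log u)^k}{k!}N_0^k w$ with $w\in W$, so $N_0$-stability of $W$ gives $V_A\subseteq(\mathcal{R}W)[\log u]$. Since $V_A$ spans $A_{\mathcal{R}}[\log u]$, taking $(\log u)^0$-components gives $A_{\mathcal{R}}\subseteq\mathcal{R}W$.
\item State the induction for finitely generated $\nabla$-submodules rather than saturated ones. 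Saturation of $(A_{\mathcal{R}}+M^{(1)})/M^{(1)}$ in $M_{\mathcal{R}}/M^{(1)}$ is not clear a priori. Such submodules are free because $\mathcal{R}$ is B\'ezout, and your base case holds in that generality.
\end{itemize}
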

\begin{proof}
We use \cite{ambrus_p_adic_monodromy}, Lemma 2.12 which is the regular version of this result inductively. Consider the filtration $M_0\subseteq M_1\subseteq\dots \subseteq M^{\dagger}\otimes_{\mathcal{E}^{\dagger}} \mathcal{R}=M_n$ induced by $N_0$ such that $M_i/M_{i-1}$ is trivial. Intersect $A\otimes_{\mathcal{E}^\dagger}\mathcal{R}$ with the above filtration to get a filtration of $A$ such that $A_{i}/A_{i-1}\hookrightarrow M_i/M_{i-1}$. As $M_i/M_{i-1}$ is trivial  \cite{ambrus_p_adic_monodromy}, Lemma 2.12 to find a basis of horizontal vectors of $\nabla_{\log}\otimes \mathcal{R}|_{A\otimes_{\mathcal{E}^\dagger}\mathcal{R}}$ for $A_i/A_{i+1}$ for all $i\geq 1$. Lifting these to a basis of $A$. By construction, $\nabla_{\log}=d+Nd/dt$ where $N\in \mathcal{R}$ is nilpotent. As $A_0$ is trivial, we know that the entries $N|_{A_0}$ lie in $L^{\ur}$. Assume $N|_{A_i}$ also has entries in $L^{\ur}$ for $i>1$. Let $e_1, \cdots e_r, f_1,\cdots f_r$ denote the basis for $A_{i+1}$ such that $e_i$ span $A_i$ and $f_i$ lift the horizontal basis of $A_{i+1}/A_i$. Let $f_j = \sum a_{ij}(u)e_i du/u$ with $a_{ij}(u)\in \mathcal{R}$. Replace each $f_j$ by $f_j - \sum_i b_{ij}(u)e_i$ with $b_{ij}(u)\in \mathcal{R}$ where we find $b_{ij}(u)$ by solving $\nabla(f_j) = \sum_i a_{i,j}(0)e_i du/u$. That is, for each $i$, we have $u d (b_{ij}(u))/du + N|_{A_i} b_{ij}(u) =a_{ij}(u) -a_{ij}(0).$ Writing $b_{ij}$ and $a_{ij}$ as power series and comparing coefficients for $u^n$, we find a solution. The lemma now follows by induction. 
\end{proof}
\begin{prop}\label{prop:pal}
Let $M^{\dagger}$ be an F-isocrystal over $\mathcal{E}^{\dagger}$ such that $M^{\dagger}\otimes_{\mathcal{E}^{\dagger}} \mathcal{R}\simeq M'\otimes_{\mathcal{R}_+} \mathcal{R}$ where $M'$ is a log F-isocrystal over $\mathcal{R}_+$. Then there exists a log F-isocrystal $M$ over $\mathcal{E}_+$ such that $M\otimes_{\mathcal{E}_+}\mathcal{E}^{\dagger}\simeq M^{\dagger}$. 
\end{prop}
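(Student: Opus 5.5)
Here we need a log F-isocrystal $M$ over $\mathcal{E}_+$ with $M\otimes_{\mathcal{E}_+}\mathcal{E}^{\dagger}\simeq M^{\dagger}$. The idea is a gluing (Beauville--Laszlo type) argument using $\mathcal{E}_+ = \mathcal{R}_+\cap \mathcal{E}^{\dagger}$ inside $\mathcal{R}$. This is the log analogue of the regular case in \cite{ambrus_p_adic_monodromy} Section 2 and \cite{kedlaya_notes} Theorem 7.3.

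Fix the isomorphism $\phi: M^{\dagger}\otimes_{\mathcal{E}^{\dagger}}\mathcal{R}\simeq M'\otimes_{\mathcal{R}_+}\mathcal{R}$ and define
$$M := \{ m\in M^{\dagger} : \phi(m\otimes 1)\in M'\} = M^{\dagger}\cap M' \subset M^{\dagger}\otimes_{\mathcal{E}^{\dagger}}\mathcal{R}.$$
Since $\phi$ respects Frobenius and (log) connection, $M$ is stable under $F$ and $\nabla_{\log}$. Here $\nabla_{\log}$ on $M^{\dagger}$ is $u\nabla$, and it agrees with the log connection of $M'$ after base change to $\mathcal{R}$. So it only remains to show three things: $M$ is finite free over $\mathcal{E}_+$, $M\otimes_{\mathcal{E}_+}\mathcal{E}^{\dagger}\to M^{\dagger}$ is an isomorphism, and $M\otimes\mathcal{R}_+\simeq M'$. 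Frobenius invertibility then comes for free from that of $M^{\dagger}$ and $M'$.

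To establish this, I will first use \Cref{prop:redmodtlog} to replace $M'$ by $M'_0\otimes_{L^{\ur}}\mathcal{R}_+$. In this model there is a basis $e_1,\dots,e_n$ with $\nabla_{\log}=d+N_0\,du/u$, $N_0$ nilpotent with entries in $L^{\ur}$, and Frobenius matrix $F_0$ constant. These $e_i$ give a basis of $M^{\dagger}\otimes\mathcal{R}$ with constant connection and Frobenius matrices.

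The key step is to show that the $e_i$ already lie in $M^{\dagger}$, or at least can be moved there by a change of basis over $\mathcal{R}_+$. Take any basis $f_j$ of $M^{\dagger}$ and let $X\in\GL_n(\mathcal{R})$ be the change of basis matrix, $f=eX$. The Frobenius equation gives $X = F_0^{-1}\,\sigma(X)\,A$, where $A$ is the Frobenius matrix of $M^{\dagger}$ with entries in $\mathcal{E}^{\dagger}$. The standard argument, \cite{kedlaya_book} (the ``descent of $\sigma$-modules from $\mathcal{R}$ to $\mathcal{E}^{\dagger}$'' / Kedlaya's lemma that a matrix over $\mathcal{R}$ satisfying such a Frobenius equation with bounded data has its ``negative part'' in $\mathcal{E}^{\dagger}$), then lets us factor $X = Y Z$ with $Y\in \GL_n(\mathcal{R}_+)$ and $Z\in\GL_n(\mathcal{E}^{\dagger})$. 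Concretely, I would try to prove that $\mathcal{R} = \mathcal{R}_+ + \mathcal{E}^{\dagger}$ with intersection $\mathcal{E}_+$ (split a Laurant series into its principal part, which lies in $\mathcal{E}^{\dagger}$ because it converges for $|u|>*$, and its analytic part), and then use the matrix factorization $\GL_n(\mathcal{R}) = \GL_n(\mathcal{R}_+)\GL_n(\mathcal{E}^{\dagger})$ for matrices close to the identity. The factorization for arbitrary $X$ is obtained by first approximating $X$ by an element of $\GL_n(\mathcal{E}^{\dagger})$, which is possible since $\mathcal{E}^{\dagger}$ is dense in $\mathcal{R}$ in the appropriate Fr\'echet sense. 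This factorization is the main obstacle, since $\mathcal{R}$ is not complete for a single norm. If needed I would instead invoke \Cref{lem:nilbasis} together with the regular case \cite{ambrus_p_adic_monodromy} Lemma 2.12 inductively along the filtration by $\ker N_0^i$: each graded piece is trivial, and the regular result gives $\mathcal{E}_+$-lattices piece by piece, which are then extended.

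Given the factorization, the vectors $g := eY$ form a basis of $M'$ over $\mathcal{R}_+$ and also equal $fZ^{-1}$, a basis of $M^{\dagger}$. Thus $M$ is free on $g$: indeed, any $m=\sum c_i g_i$ with $c_i\in\mathcal{R}_+\cap\mathcal{E}^{\dagger}=\mathcal{E}_+$ lies in $M$, and conversely. Both base change statements then follow immediately, which finishes the proof.
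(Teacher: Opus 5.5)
\textbf{Verdict: the gluing strategy is sound, but the key step has a genuine gap.} The matrix factorization that carries the whole proof is left unproved, and the argument you sketch for it cannot work.

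\textbf{What is correct.} Your reduction is fine. Suppose you can write $X=YZ$ with $Y\in\GL_n(\mathcal{R}_+)$ and $Z\in\GL_n(\mathcal{E}^{\dagger})$. Then $g=eY=fZ^{-1}$ is at once an $\mathcal{R}_+$-basis of $M'$ and an $\mathcal{E}^{\dagger}$-basis of $M^{\dagger}$. So $M=M^{\dagger}\cap M'$ is free on $g$ over $\mathcal{R}_+\cap\mathcal{E}^{\dagger}=\mathcal{E}_+$. The Frobenius matrix, its inverse, and the matrix of $\nabla_{\log}$ on $g$ all lie in $M_n(\mathcal{R}_+)\cap M_n(\mathcal{E}^{\dagger})=M_n(\mathcal{E}_+)$. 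The log Dwork normalisation of $M'$ is not even needed for this.

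\textbf{Why your argument for the factorization fails.}
\begin{itemize}
\item A near-identity iteration $E=E_++E_-$ gives a factor in $\GL_n(\mathcal{R}_+)$ only if $E_+$ is small on every disc $|u|\le\rho<1$. That means $E$ is uniformly small up to $|u|=1$. But then $1+E$ is bounded, hence already lies in $\GL_n(\mathcal{E}^{\dagger})$, and there is nothing to factor.
\item Fr\'echet density of $\mathcal{E}^{\dagger}$ in $\mathcal{R}$ only gives closeness on compact subannuli. It cannot make $XX_0^{-1}$ close to $1$ near the boundary when $X$ is unbounded there, and that unbounded growth is exactly what $Y$ has to absorb.
\item The ``Kedlaya lemma'' you quote is empty: the negative part of any element of $\mathcal{R}$ already lies in $\mathcal{E}^{\dagger}$. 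The obstruction sits in the positive parts.
\item The fallback presupposes the regular case of the statement itself. It also leaves two things unaddressed: descending the $\ker N_0^i$-filtration from $\mathcal{R}$ to $\mathcal{E}^{\dagger}$, and extending the extension classes over $\mathcal{E}_+$.
\item Your first hope, $e_i\in M^{\dagger}$, is false in general. For an ordinary elliptic curve over $K$ with supersingular reduction it would make $M^{\dagger}$ constant, with slopes $\tfrac12,\tfrac12$ over $\mathcal{E}$ instead of $0,1$. So a nontrivial $Y$ is genuinely needed.
\end{itemize}

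\textbf{How to close the gap.} The factorization $\GL_n(\mathcal{R})=\GL_n(\mathcal{R}_+)\GL_n(\mathcal{E}^{\dagger})$ is true, but it needs a global input.
\begin{itemize}
\item If $X^{\pm1}$ have entries analytic on $\{r<|u|<1\}$, use $X$ to glue trivial bundles on the admissible cover $U_1=\{|u|<1\}$, $U_2=\{|u|>r\}\cup\{\infty\}$ of $\mathbb{P}^{1,\mathrm{an}}_{L^{\ur}}$.
\item By rigid GAGA and Grothendieck's theorem, the resulting bundle is $\bigoplus_i\mathcal{O}(k_i)$.
\item Comparing trivializations gives $X=Y\,D\,W$ with $Y\in\GL_n(\mathcal{O}(U_1))=\GL_n(\mathcal{R}_+)$, $D=\mathrm{diag}(u^{k_i})$, and $W\in\GL_n(\mathcal{O}(U_2))$.
\item The entries of $W^{\pm1}$ are power series in $u^{-1}$ converging for $|u|>r$. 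They are therefore bounded near $|u|=1$ with exponentially decaying coefficients, so $DW\in\GL_n(\mathcal{E}^{\dagger})$.
\end{itemize}
With this inserted, your route becomes a complete, self-contained proof. It shows the statement is really about lattices: the Frobenius and log connection descend automatically because they preserve both $M^{\dagger}$ and $M'$.

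\textbf{Comparison with the paper.} The paper gives no argument of its own here. It invokes Pál's Theorem 2.10 for the regular case and asserts that the same proof goes through with $u\,d/du$ in place of $d/du$.
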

\begin{proof}
This is Theorem 2.10 in \cite{ambrus_p_adic_monodromy} when $M'$ is an F-isocystal over $\mathcal{E}_+$. The proof in the logarithmic case is verbatim the same with $ u\cdot d/du $ in place of $d/du.$
\end{proof}
\begin{prop}\label{prop:pal2}
Let $A^{\dagger}$ be an F-isocrystal over $\mathcal{E}^{\dagger}$ which is a sub F-isocrystal
of $M\otimes_{\mathcal{E}_+} \mathcal{E}^{\dagger}$, where $M$ is a log F-isocrystal over
$\mathcal{E}_+$. Then there is a sub-log F-isocrystal $A$ of $M$ over $\mathcal{E}_+$ such that $A^{\dagger}\simeq A\otimes_{\mathcal{E}_+} \mathcal{E}^{\dagger}.$ 
\end{prop}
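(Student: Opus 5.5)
The plan is to imitate the regular case (\cite{ambrus_p_adic_monodromy}, Proposition 2.14), which derives the statement about sub-objects from the extension result (\Cref{prop:pal}) and full faithfulness (\Cref{prop:extfullyfaithful}).

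First, base change to the Robba ring. Since $M$ is a log F-isocrystal over $\mathcal{E}_+$, the Logarithmic Dwork trick (\Cref{prop:redmodtlog}) gives $M\otimes_{\mathcal{E}_+}\mathcal{R}_+\simeq M_0\otimes_{L^{\ur}}\mathcal{R}_+$. In this basis the connection is $d+N_0\,du/u$, with $N_0$ nilpotent and constant. Hence $M^{\dagger}\otimes_{\mathcal{E}^{\dagger}}\mathcal{R}$ has such a basis as well. By \Cref{lem:nilbasis}, $A^{\dagger}\otimes_{\mathcal{E}^{\dagger}}\mathcal{R}$ also admits a basis in which $\nabla=d+N_A\,du/u$ with $N_A$ nilpotent with entries in $L^{\ur}$.

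Next, we produce a log F-isocrystal $A'$ over $\mathcal{R}_+$ whose base change to $\mathcal{R}$ is $A^{\dagger}\otimes\mathcal{R}$. The natural candidate is the $\mathcal{R}_+$-span of the basis from the previous step. This span is stable under $\nabla_{\log}$ by construction, since $N_A$ is constant. The delicate point is Frobenius stability: the compatibility $F\nabla=\nabla F$ forces the matrix of $F$ in this basis to commute with $d$ up to the $N_A$-twist. Concretely, $u\,\frac{d}{du}\Phi=N_A\Phi-q\Phi\,\sigma(N_A)$, where $\Phi$ is the matrix of $F$. Expanding $\Phi$ in powers of $u$, the nonconstant coefficients satisfy $n\Phi_n=N_A\Phi_n-q\Phi_n\sigma(N_A)$ for $n\neq 0$. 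The operator $X\mapsto N_AX-qX\sigma(N_A)$ is nilpotent, so $\Phi_n=0$ for all $n\neq 0$. Thus $\Phi$ is constant, $A'$ is defined over $\mathcal{R}_+$ (even over $L^{\ur}$), and $A'$ is a log F-isocrystal.

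Now apply \Cref{prop:pal} to $A^{\dagger}$ and $A'$. This yields a log F-isocrystal $A$ over $\mathcal{E}_+$ with $A\otimes_{\mathcal{E}_+}\mathcal{E}^{\dagger}\simeq A^{\dagger}$. The inclusion $A^{\dagger}\hookrightarrow M\otimes\mathcal{E}^{\dagger}$ is a morphism in $\textbf{F-Isoc}^{\dagger}(K)$ between base changes of the log F-isocrystals $A$ and $M$. By \Cref{prop:extfullyfaithful}, it comes from a morphism $\iota:A\to M$ of log F-isocrystals over $\mathcal{E}_+$.

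It remains to check that $\iota$ is injective with image a sub-object. Injectivity holds because $\mathcal{E}_+\to\mathcal{E}^{\dagger}$ is injective, $A$ is free, and $\iota\otimes\mathcal{E}^{\dagger}$ is injective. For the sub-object claim, we replace $A$ by its saturation $\iota(A)\otimes\mathcal{E}^{\dagger}\cap M$ inside $M$. This saturation is stable under $F$ and $\nabla_{\log}$, finite free because $\mathcal{E}_+$ is a PID, and has the same base change to $\mathcal{E}^{\dagger}$.

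I expect the main obstacle to be the second step: verifying that the span from \Cref{lem:nilbasis} is genuinely Frobenius-stable over $\mathcal{R}_+$ and not merely over $\mathcal{R}$. If the nilpotency argument on Laurent coefficients fails, for instance because of negative powers appearing over $\mathcal{R}$, the fallback is to intersect $A^{\dagger}\otimes\mathcal{R}$ with $M\otimes\mathcal{R}_+$ inside $M\otimes\mathcal{R}$. One would then argue via the horizontal basis over $\mathcal{R}_+[\log u]$ from \Cref{lem:basishorizontal} that this intersection is a full-rank log F-isocrystal over $\mathcal{R}_+$.
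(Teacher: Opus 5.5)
Your proposal is correct and follows the paper's route: the logarithmic Dwork trick on $M\otimes\mathcal{R}_+$, then \Cref{lem:nilbasis} to get a constant nilpotent connection matrix on $A^{\dagger}\otimes\mathcal{R}$, then descent of that basis to a log F-isocrystal $A'$ over $\mathcal{R}_+$, and finally \Cref{prop:pal} followed by \Cref{prop:extfullyfaithful}. You also fill in details the paper leaves implicit: the coefficient-wise argument that the Frobenius matrix is constant (the operator $X\mapsto N_AX-qX\sigma(N_A)$ is nilpotent, and this works for all $n\in\Z$, so your fallback is unnecessary), and the injectivity/saturation check, where you should add that $F$ stays an isomorphism on the saturation $S$ because $F(\sigma^*S)$ is again saturated in $M$ with the same $\mathcal{E}^{\dagger}$-span.
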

\begin{proof}
 When $M$ is an F-isocrystal over $\mathcal{E}_+$, this is Theorem 2.14 in \cite{ambrus_p_adic_monodromy}. By \Cref{prop:redmodtlog}, $M\otimes_{\mathcal{E}_+}\mathcal{R}$ admits a basis such that $\nabla_{\log} = d+N_0 du/u$ such that the entries of $N_0\in L$. The same basis work over $\mathcal{M}\otimes_{\mathcal{E}_+}\mathcal{R}$. By \Cref{lem:nilbasis}, there exists a basis for $A^\dagger\otimes_{\mathcal{E}^{\dagger}}\mathcal{R}$ such that $\nabla_{\log}|_{A^\dagger} = d+N_0' du/u$ where $N_0'$ is nilpotent with entries in $L$. Consequently, there exists a log isocrystal $A'$ over $R_+$ such that $A'\otimes_{\mathcal{R}_+}\mathcal{R}\simeq A$. By \Cref{prop:pal}, there exists a log F-isocrystal $A$ over $\mathcal{E}_+$ such that $A\otimes_{\mathcal{E}_+}\mathcal{E}^\dagger\simeq A^\dagger$. By \Cref{prop:extfullyfaithful}, $A\subset M$.
\end{proof}
\begin{notn}
Let $M$ be a log F-isocrystal over $\mathcal{E}_+$. We use the notation $\langle{M}\rangle$ for the smallest tensor category generated by $M$ in $\textbf{F-Isoc}_{\log}^{\dagger}(\o_K)$ closed under subobjects, duals, and quotients whenever the latter exist in the ambient category. When $\nabla_{\log}$ is regular, then $\langle{M}\rangle$ will be understood as in \Cref{notn:tannakian}.
\end{notn}

Let $\overline{M}\in \textbf{F-Isoc}_{\log}(\mathcal{O}_K)$ be a log F-isocrystal. Write $M:= \overline{M}\otimes\mathcal{E}$. Let $U$ be a unit root subobject of $M$. Denote the Galois group $\Gal(K^{\sep}/K)$ by $G_K$ and its inertia subgroup by $I_K$. Let $G\subset \GL(V)$ denote the Zariski closure of the image of $G_K$ in the associated Galois representation and $H$ denote the Zariski closure of $I_K$.
\begin{prop}\label{prop:unitroot_goodredn}
We keep the above notation. Let $\mathbb{W}$ be any representation of $G/H$ and $W$ over $\mathcal{E}$ be the associated unit root F-isocrystal. Then there exists an F-isocrystal $\overline{W}$ in $\langle{\overline{M}}\rangle$ over $\mathcal{E}_+$ such that $\overline{W}\otimes \mathcal{E} = W$ and the pullback of $\overline{W}$ to $L^{\ur}$, yields a unit root Dieudonn\'e module $W_0$ in $\langle{\overline{M}\otimes L^{\ur}}\rangle$.
\end{prop}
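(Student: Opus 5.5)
Since $\mathbb{W}$ is a representation of $G/H$, the inertia subgroup $I_K$ acts on $\mathbb{W}$ through its Zariski closure $H$, and hence acts trivially. By \Cref{thm:tsuzuki_unit_root}, the unit root F-isocrystal $W$ is then overconvergent: there is a unit root $W^{\dagger}$ over $\mathcal{E}^{\dagger}$ with $W=W^{\dagger}\otimes\mathcal{E}$. Moreover, a representation of $G_K$ with trivial inertia factors through $\Gal(k^{\sep}/k)$ and is unramified. Its unit root F-isocrystal is therefore constant, namely $W_0\otimes_{L^{\ur}}\mathcal{E}$ with $W_0$ the associated unit root Dieudonn\'e module over $L^{\ur}$. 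So $W$ is a constant overconvergent F-isocrystal.

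Next we check that $W$ lies in $\langle M\rangle$. The Tannakian category $\langle U\rangle$ is equivalent to $\Rep_{\Q_p}(G)$, because $G$ is the Zariski closure of the image and hence the Tannakian group of the representation $V$ attached to $U$. Under this equivalence $\mathbb{W}$ is a subquotient of tensor constructions on $V$, so $W\in\langle U\rangle\subseteq\langle M\rangle$. We now apply \Cref{cor:subquotientoverconverget} to the overconvergent isocrystal $M$, which is overconvergent with $M^{\dagger}=\overline{M}\otimes\mathcal{E}^{\dagger}$. It gives $W^{\dagger}\in\langle M^{\dagger}\rangle$, so $W^{\dagger}$ is a subquotient of some $T(M^{\dagger})=T(\overline{M})\otimes\mathcal{E}^{\dagger}$.

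To pass from $\mathcal{E}^{\dagger}$ to $\mathcal{E}_+$, write $W^{\dagger}=A^{\dagger}/B^{\dagger}$ with $B^{\dagger}\subseteq A^{\dagger}\subseteq T(\overline{M})\otimes\mathcal{E}^{\dagger}$. Applying \Cref{prop:pal2} twice gives sub-log F-isocrystals $B\subseteq A\subseteq T(\overline{M})$ over $\mathcal{E}_+$ extending $B^{\dagger}$ and $A^{\dagger}$; the inclusion $B\subseteq A$ comes from \Cref{prop:extfullyfaithful}. We set $\overline{W}:=A/B$, which lies in $\langle\overline{M}\rangle$. Its cokernel is free because $\mathcal{E}_+$ is differentially simple, as in the abelian-category lemma. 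Then $\overline{W}\otimes\mathcal{E}^{\dagger}\simeq W^{\dagger}$, so $\overline{W}\otimes\mathcal{E}=W$.

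It remains to identify the fibre at $t=0$. The regular constant object $W_0\otimes\mathcal{E}_+$ and $\overline{W}$ have isomorphic extensions to $\mathcal{E}^{\dagger}$. By \Cref{prop:extfullyfaithful} that isomorphism comes from an isomorphism over $\mathcal{E}_+$, so $\overline{W}\simeq W_0\otimes\mathcal{E}_+$. In particular $\overline{W}$ has a regular connection (zero residue), and its reduction mod $t$ is $W_0$, which is unit root. Reduction mod $t$ is an exact tensor functor. Since $\overline{W}$ is a subquotient of $T(\overline{M})$ and all the objects involved are free, $W_0$ is a subquotient of $T(\overline{M}\otimes L^{\ur})$, i.e.\ $W_0\in\langle\overline{M}\otimes L^{\ur}\rangle$.

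The main obstacle is this last step: we need exactness of reduction mod $t$ for the chosen subobjects. This requires $A$, $B$ and $A/B$ to be saturated (free with free quotients) in $T(\overline{M})$, so that no torsion appears at $t=0$. I would secure this via differential simplicity of $\mathcal{E}_+$, and argue that the residue compatibility causes no problem because the residue of $\overline{W}$ vanishes.
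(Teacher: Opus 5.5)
Your argument follows the paper's proof step for step: Tsuzuki's theorem shows $W$ is overconvergent and constant, $\langle U\rangle\simeq\Rep(G)$ places $W$ in $\langle M\rangle$, then \Cref{cor:subquotientoverconverget} applies, and \Cref{prop:pal2} with \Cref{prop:extfullyfaithful} descends to $\mathcal{E}_+$ before reducing mod $t$; the only difference is that you build $\overline{W}$ directly as $A/B$ and recognise it as $W_0\otimes\mathcal{E}_+$ by full faithfulness, which skips the paper's detour through \Cref{prop:redmodtlog} and \Cref{prop:pal} and makes the reduction-mod-$t$ step explicit. One small fix: when $\overline{M}$ is genuinely logarithmic, $\mathcal{E}_+$ is not differentially simple for $u\,d/du$ (the ideal $u\mathcal{E}_+$ is stable), so you should get the saturation of $B\subseteq A\subseteq T(\overline{M})$ from the Frobenius instead, because a finitely generated torsion $\mathcal{E}_+$-module $C$ with $\sigma^*C\simeq C$ must vanish, as $\sigma$ multiplies by $q$ the Weierstrass degree of a generator of its Fitting ideal.
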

\begin{proof}
As $\mathbb{W}$ is unramified, by \Cref{thm:tsuzuki_unit_root}, $W$ is overconvergent. Therefore, there exists an F-isocrystal $W^{\dagger}$ over $\mathcal{E}^{\dagger}$ such that $W^{\dagger}\otimes \mathcal{E} =W$. As $V$ is a faithful representation of $G$, by \Cref{thm:tsuzuki_unit_root} and the proof of Proposition 2.20 (b) in \cite{Milne_Deligne_tannakian}, we see that $\langle{U}\rangle \simeq \Rep (G)$. This implies $\mathbb{W}$ viewed as a representation of $G$ is an object in $\langle{V}\rangle$.  By  \Cref{thm:tsuzuki_unit_root}, $W\in \langle U\rangle\subset \langle M\rangle$. By construction $W$ is constant and hence \Cref{cor:subquotientoverconverget} implies that $W^{\dagger}\in \langle{M^{\dagger}}\rangle$. Now let $A^{\dagger}\subseteq \oplus_i{M^{\dagger}}^{\otimes n_i}\otimes ({M^\dagger}^{\vee})^{\otimes m_i}:= T(M)$ be such that $W^{\dagger}$ is a quotient of $A^{\dagger}$. By \Cref{prop:pal2}, there exists a sub log-F-isocrystal $A\subset M$ over $\mathcal{E}_+$ such that $A^{\dagger}\simeq A\otimes_{\mathcal{E}_+} \mathcal{E}^{\dagger}.$
We further note that $W$ is the pull back of a unit root F-isocrystal over $L$. By \Cref{prop:redmodtlog}, we can find F-isocrystal $W'$ over $\mathcal{R}_+$ such that $W'\otimes_{\mathcal{R}_+}\mathcal{R}=W^{\dagger}\otimes_{\mathcal{E}^{\dagger}} \mathcal{R}$.  By \Cref{prop:pal}, we see that there exists an F-isocrystal $\overline{W}$ over $\mathcal{E}_+$ such that $\overline{W}\otimes_{\mathcal{E}_+}\mathcal{E}^{\dagger}\simeq W^{\dagger}$. By applying \Cref{prop:extfullyfaithful} to $A^{\dagger}\rightarrow W^{\dagger}$ we see that $\overline{W}\in \langle \overline{M}\rangle$. Reducing mod $t$ finishes the proof of the lemma. 
\end{proof}

\section{The case of good reduction}\label{sec:proofgoodred}

 We now prove \Cref{thm:mainthm}. 

\begin{defn} 
Let $M_0$ be an F-isocrsytal over $L^{\ur}$ and let $\varphi$ denote the linearised Frobenius $F^s$ where $s=[
\F_q:\F_p].$ We will call $M_0$ \textit{decent} if the action of $\varphi$ on $M_0$ is semi-simple, algebraic, $p$-plain, and isoclinic.
\end{defn}
\begin{rem}\label{rem:decent}
This is a mild generalisation of the termininology of \textit{decent}  given in \cite{KLSS}, Section 2.8. This allows us to write the proof in a more general set-up as described in \Cref{mainresult}. In particular, the above definition matches with them when $M$ is the Dieudonn\'e module of a supersingular abelian variety over $\F_q$, all the eigen-values $\varphi$ are equal and are rational power of $q$. 
\end{rem}

Recall the set-up of \Cref{thm:mainthm}. Let $\mathcal{M}^{\dagger}$ be an overconvergent F-isocrystal over $X$ over $\F_p$ such that the action of the Frobenius is algebraic, $p$-plain and semisimple at closed points. Let $Z\subset X$ be such that $\mathcal{M}^{\dagger}|_{Z}$ has a non-trivial unit root sub-object $\mathbb{U}$. Write $\eta$ to be the generic point of $Z$ and $x$ for its specialisation in the interior of $\overline{Z}$. By pulling back $\mathbb{U}$ to $\eta$  in a local neighborhood of $x$, we get a Galois representation:
$$\rho_x: \Gal(K^{\sep}/K)\rightarrow \GL_n(\Q_p).$$
Let $G$ and $H$ denote the Zariski closure of the image of $G_K$ and $I_K$ in $\GL_n(\Q_p)$ respectively. We note that $[\rho_A(G_K):\rho_A(I_K)]$ is finite if and only if $[G:H]$ is finite (\cite{KLSS}, Lemma 2.3).
 Let $M^{\dagger}$ over $\mathcal{E}^{\dagger}$ denote the pullback of $\mathcal{M}^{\dagger}$ to $\eta$ in a local neighborhood of $x$. Let $\overline{M}$ be its extension to $\mathcal{E}_+$. Write $M = \overline{M}\otimes \mathcal{E}$ and let $U$ be the unit root sub-F isocrystal of $M$ associated to $\rho_x$. Write $M_0$ for the Dieudonn\'e module of $x$, that is, the base change of $\overline{M}\otimes_{\mathcal{E}_+} L^{\ur}$.
\begin{thm}[\Cref{thm:mainthm}]\label{proofmainthm}
Suppose $x$ is isoclinic, then $[G:H]$ is finite.
\end{thm}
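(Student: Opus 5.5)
The plan is to argue as in \cite{KLSS}, with \Cref{prop:unitroot_goodredn} replacing the $D_{\cris}$ functor. The group $G/H$ is an affine algebraic group over $\Q_p$; since $H$ is normal in $G$ the quotient exists. Representations of $G/H$ are exactly the representations $\mathbb{W}$ of $G$ on which the inertia acts through a group with trivial Zariski closure, i.e.\ unramified representations. Hence $G/H$ is the Zariski closure of the image of a single Frobenius element $\Frob$. Such a closure is commutative, of the form (torus)$\times$(finite)$\times$(possibly unipotent part). To show $[G:H]<\infty$ it is enough to show that $\Frob$ acts with finite order on every representation $\mathbb{W}$ of $G/H$. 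It even suffices to check this for one faithful such $\mathbb{W}$: then $G/H$ is finite, because it is the Zariski closure of a finite cyclic group.

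Fix such a faithful $\mathbb{W}$ and let $W$ be its unit root F-isocrystal over $\mathcal{E}$. By \Cref{prop:unitroot_goodredn} there is an $\overline{W}\in\langle\overline{M}\rangle$ over $\mathcal{E}_+$ extending $W$, whose fibre $W_0$ at $t=0$ is a unit root Dieudonn\'e module lying in $\langle M_0\rangle$. Since $W$ is constant, the eigenvalues of the linearised Frobenius $\varphi$ on $W_0$ are the eigenvalues of $\rho(\Frob)$ on $\mathbb{W}$, up to the identification $q=p^s$. The identification is exact at the level of Jordan structure as well: $\overline{W}\cong W_0\otimes\mathcal{E}_+$, because the constant structure over $\mathcal{R}_+$ descends by \Cref{prop:pal} and \Cref{prop:extfullyfaithful}. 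So it suffices to show that $\varphi$ acts on $W_0$ semisimply with roots of unity as eigenvalues.

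Now use decency of $M_0$. By hypothesis (and since $x$ is isoclinic), $\varphi$ on $M_0$ is semisimple and algebraic, and all of its eigenvalues have the same $p$-adic valuation, say slope $\lambda$. They are also $\ell$-adic units for every $\ell\ne p$. These properties pass to $T(M_0)$ and to its subquotients. Tensor products of semisimple operators are semisimple in characteristic $0$, and subquotients of semisimple operators are semisimple. Algebraicity and $p$-plainness are preserved under products, inverses and taking sub-multisets. Every eigenvalue of $\varphi$ on $M_0^{\otimes n}\otimes(M_0^\vee)^{\otimes m}$ has slope $(n-m)\lambda$, so $T(M_0)$ is again a sum of isoclinic pieces. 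Hence $\varphi$ on $W_0$ is semisimple, and its eigenvalues $\alpha$ are algebraic numbers that are $\ell$-adic units for all $\ell\ne p$.

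The unit root condition on $W_0$ gives $v_p(\alpha)=0$ for the chosen $p$-adic embedding. The main obstacle is to get this for \emph{all} $p$-adic places, which is what is needed to conclude via Kronecker's theorem. Here isoclinicity is used. All conjugates of an eigenvalue $\beta$ of $\varphi$ on $M_0$ are again eigenvalues, since the characteristic polynomial has rational coefficients after Frobenius descent to $\Q_p$ and algebraicity. So all $p$-adic valuations of the eigenvalues of $M_0$ at every place above $p$ are the same rational number $\lambda$ in suitable normalisation. Then $\alpha$, being a monomial in eigenvalues of total degree $n-m$, has valuation $(n-m)\lambda$ at every place above $p$. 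This number is $0$ at our chosen place, so it is $0$ everywhere.

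Thus $\alpha$ is a unit at all finite places, so it is an algebraic integer unit. To finish, I would combine this with the $p$-plain hypothesis and obtain $|\alpha|=1$ at archimedean places. Concretely: $\alpha\bar\alpha$ for all conjugates is again a product of eigenvalues. Alternatively, use the product formula together with the fact that a unit of a number field whose conjugates all have equal absolute value is a root of unity, where equality of absolute values comes from purity of the geometric origin or from the same conjugacy argument applied to $\alpha$ and its inverse lying in $\langle M_0\rangle$. If this archimedean step fails in the abstract setting, I would instead argue that $\mathbb{Z}[\alpha^{\pm1}]$-valued characteristic polynomials of bounded degree with all roots units at all places force finitely many options as $\alpha^k$ varies, giving torsion. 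Either way $\varphi$ is semisimple of finite order on $W_0$, so $\Frob$ has finite order on $\mathbb{W}$, and $[G:H]<\infty$.
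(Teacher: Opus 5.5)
Your route is the paper's: a faithful $\mathbb{W}$ of $G/H$, then \Cref{prop:unitroot_goodredn}, then eigenvalues $\alpha=\prod_i\alpha_i^{d_i}$ of $\varphi$ on $W_0$, a valuation count, and roots of unity. Your first three paragraphs are fine. The gap is the passage from ``$\alpha$ is a unit at the chosen place'' to ``$\alpha$ is a root of unity'', and neither of your justifications holds.

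\emph{Other places above $p$.} Frobenius descent puts the characteristic polynomial of $\varphi$ on $M_0$ in $\Q_p[x]$, and algebraicity makes its coefficients algebraic. Together this only places them in $\overline{\Q}\cap\Q_p$, not in $\Q$, so the eigenvalues need not be Galois-stable. For example, take $p=7$, the $7$-adic $\sqrt2\equiv 4 \pmod 7$, and $\varphi=\mathrm{diag}(3+\sqrt2,\,7)$. Then $M_0$ is isoclinic of slope $1$ at the chosen place, yet the unit-root eigenvalue $(3+\sqrt2)/7=(3-\sqrt2)^{-1}$ on $M_0\otimes M_0^\vee$ has valuation $-1$ at the other place above $7$.

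\emph{Archimedean places.} Nothing in ``semisimple, algebraic, $p$-plain, isoclinic'' controls them, and your fallback is false. The number $1+\sqrt2$ is a unit at every finite place, but the characteristic polynomials of its powers are pairwise distinct because their traces are unbounded. Concretely, let $M_0=\Q_p^2$ with $\varphi$ of characteristic polynomial $x^2-2px-p^2$. Its eigenvalues are $p(1\pm\sqrt2)$: rational coefficients, slope $1$ at every place above $p$. Yet $M_0\otimes M_0^\vee$ is unit root and carries the eigenvalue $-(3+2\sqrt2)$, which has infinite order.

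The paper's proof asserts exactly these two steps (``$v(\alpha_i)=r$ for all places lying over $p$'' and ``as $\alpha_i$ are units away from $p$ \dots\ $\alpha$ is a root of unity''), so the gap is shared with the paper, not introduced by you. The statement itself needs more input. On a fine modular curve over $\F_7$, take $\mathcal{N}\oplus(\mathcal{N}\otimes\mathcal{C})$, with $\mathcal{N}$ the Dieudonn\'e isocrystal of the universal elliptic curve and $\mathcal{C}$ the constant rank-one object with $F=1+\sqrt2$. It satisfies all three hypotheses. But at a supersingular point $\rho_x=\rho_E\oplus\rho_E\chi$, with $\rho_E$ Igusa's character and $\chi$ unramified of infinite order. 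Hence $H$ is the diagonal torus while $G=\mathbb{G}_m^2$, so $[G:H]$ is infinite.

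The missing ingredients are two:
(i) purity of $M_0$, i.e.\ $|\iota(\alpha_i)|=q^{w/2}$ for all $i$ and all embeddings $\iota\colon\overline{\Q}\to\C$;
(ii) Galois-stability of the $\alpha_i$, for example a characteristic polynomial in $\Q[x]$.
Both hold in the abelian and Shimura applications (Weil numbers, $\Q$-rational Frobenius classes). With them the argument closes:
\begin{itemize}
\item By (ii), $v_{\mathfrak{p}'}(\alpha)/v_{\mathfrak{p}'}(q)=r\sum_i d_i$ at every $\mathfrak{p}'\mid p$.
\item If $r\neq 0$, then $\sum_i d_i=0$, and (i) gives $|\iota(\alpha)|=1$ for all $\iota$.
\item If $r=0$, the $\alpha_i$ are units at all finite places, and the product formula forces $w=0$.
\end{itemize}
Either way Kronecker's theorem makes $\alpha$ a root of unity, and your first two paragraphs then finish the proof.
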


\begin{proof}
Let $\mathbb{W}$ be a faithful representation of $G/H$.  By applying \Cref{prop:unitroot_goodredn} to $W$, we get an F-isocrystal $\overline{W}\in \langle{\overline{M}}\rangle$. Therefore, $\overline{W}\otimes L^{\ur}\in \langle M_0\rangle $. As $x$ is isoclinic and $\mathcal{M}^{\dagger}$ is algebraic and $p$-plain at $x$, we see that $M_0$ is decent. Therefore, the action of the (linearised) Frobenius on $W_0$ is semisimple and hence, all the eigen-values of the Frobenius are of the form $\alpha = \alpha_1^{d_1}\cdots \alpha_n^{d_n}$ where $\alpha_i$ are the eigen values of the Frobenius on $M_0$. Furthermore, we know that  $\alpha_i$ (and hence $\alpha$) are algebraic and that $v(\alpha_i) = v(\alpha_j) = r $ for all places lying over $p$. Choose a place $\mathfrak{p}$ lying over $p$ and fix an embedding $\Q_p\hookrightarrow \overline{\Q_p}$. Since $W_0$ is unit root, the eigenvalues of the (linearised) Frobenius on $W_0$ are $\mathfrak{p}$-adic units. Therefore, the valuation $v_{\mathfrak{p}}(\alpha) = 0\sum_i d_i v_{\mathfrak{p}}(\alpha_i)$. We normalise the valuations by $v_{\mathfrak{p}}(q)$ where $q$ is such that $\F_q$ is the residue field of $x$. As the Newton polygon is isoclinic, we write write $v_\mathfrak{p}(\alpha_i)/v_{\mathfrak{p}}(q)=r$. This implies that $r=0$ or $\sum_i d_i=0$. Now choose any other place $\mathfrak{p}'\mid p$. Then $v_{\mathfrak{p}'}(\alpha_i)/v_{\mathfrak{p}'}(q)=r$. Therefore, $v_\mathfrak{p'}(\alpha)/v_\mathfrak{p'}(q) = r\sum_i d_i=0$. Therefore, we see that $v(\alpha)=0$ at all places $v$ lying above $p$. As $\alpha_i$ are units away from $p$ as well, we see that $\alpha$ is a root of unity and the action of the Frobenius on ${W}_0$ has finite order. Hence, for some finite extension $L^{\prime}$ of $L^{\ur}$ we see that the Galois represenation can identified with $({U}_0\otimes_{L^{\ur}} L^{\prime})^{\varphi=1}$ with $G_K/I_K$ acting on $L'$. Thus the action of $G_K/I_K$ factors through a finite quotient. As $\mathbb{W}$ is a faithful representation of $G/H$, we see that $[G:H]$ is finite.
\end{proof}

The application of \Cref{thm:mainthm} to Shimura varieties is immediate. We therefore refer the reader to \Cref{cor:goodredSV} to recall the Notation.
\begin{cor}[\Cref{cor:goodredSV}]
Let $x$ be a point $\mathscr{S}_{k(v)}(\F_q)$ with isoclinic Newton slopes. Assume \ref{assumtionfrobss} holds in general. Then the image of inertia is finite index in the image of $\rho_x$.
\end{cor}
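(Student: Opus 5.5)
The plan is to reduce the corollary to \Cref{proofmainthm}, applied to $X=\mathscr{S}_{k(v)}$ and $\mathcal{M}^{\dagger}$ the overconvergent F-isocrystal attached to the irreducible representation $V$ of $G_{\Q_p}$. What has to be checked is that $\mathcal{M}^{\dagger}$ satisfies hypotheses (1)--(3) of \Cref{mainresult} at every closed point of $\mathscr{S}_{k(v)}$. Once that is done, the local data $\overline{M}$, $M$, $U$ and $M_0$ are built exactly as in the set-up preceding \Cref{proofmainthm}, starting from the map $\Spec\o_K\to\overline{Z}$ that sends the closed point to $x$. By \cite{KLSS}, Lemma 2.3, finiteness of $[G:H]$ is the same as finiteness of $[\rho_x(G_K):\rho_x(I_K)]$.

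The three hypotheses would be checked as follows.

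\textbf{Algebraicity and $p$-plainness.} Use the $l$-independence results quoted in the introduction: for a closed point $y$ of $\mathscr{S}_{K_0,s}$, the conjugacy class of $\rho_l(\Frob_y)$ in $G$ is independent of $l$, and for $l=p$ the linearised Frobenius on $\mathcal{M}^{\dagger}_y$ is compatible with these classes. These results are Kisin for abelian type, and Klevdal--Patrikis, Huryn--Kedlaya--Klevdal--Patrikis and Patrikis in general. The characteristic polynomial of Frobenius on $\mathcal{M}^{\dagger}_y$ is therefore that of $V(\gamma_y)$ for a single element $\gamma_y$ whose class is defined over a number field. This gives algebraicity. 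For $l\neq p$, $\rho_l(\Frob_y)$ lies in a compact subgroup $G(\Z_l)$ (up to the choice of lattice), so its eigenvalues in $V$ are $l$-adic units. This gives $p$-plainness.

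\textbf{Semisimplicity.} This is exactly \Cref{assumtionfrobss}; it is known unconditionally in the abelian type case by the remarks after \Cref{cor:goodredSV}. The assumption is stated for $\mathcal{M}^{\dagger}_y$ with $V$ arbitrary, so no further argument is needed.

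It then remains to match the remaining hypotheses of \Cref{thm:mainthm}. The point $x$ is assumed to have isoclinic Newton slopes. The stratum $Z$ is chosen so that $\mathcal{M}^{\dagger}|_Z$ has a nontrivial unit-root subobject $\mathbb{U}$, and $x\in\overline{Z}(\F_q)$ is a specialisation of its generic point, so $x$ lies in the interior of $\overline{Z}$ in the sense used in \Cref{mainresult}. If $V$ is irreducible but not absolutely irreducible, or if $x$ is only defined after extending $\F_q$, I would pass to a finite extension of $K$. This is harmless, since it changes the images of $G_K$ and $I_K$ only up to finite index.

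I expect the main obstacle to be the first step: making precise that the $p$-adic overconvergent $G$-F-isocrystal of \cite{esnault-Groechnig_overconvergent_crystal} and \cite{Huryn-Patrikis-Klevdal-Kedlaya} is a companion of the $\rho_l$ at \emph{every} closed point, including points of the closed stratum. The cited compatibility theorems are stated for $\rho_l(\Frob_x)$, so I would invoke them verbatim and observe that \Cref{proofmainthm} uses the three hypotheses only at the single point $x$, through the decency of $M_0$.
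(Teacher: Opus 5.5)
Your proposal is correct and follows the paper's proof. Like the paper, you get algebraicity and $p$-plainness from the $l$-independence and companion results (you spell out the compactness argument for $p$-plainness where the paper just cites \cite{Huryn-Patrikis-Klevdal-Kedlaya} and \cite{Patrikis}), take semisimplicity from Assumption \ref{assumtionfrobss}, and apply \Cref{thm:mainthm}. The paper's only extra step is deducing isoclinicity of $\mathcal{M}^{\dagger}_x$ from $x$ being basic and $V$ irreducible via \cite{kottwitz}, Proposition 1.12; that is needed for the original formulation in \Cref{cor:goodredSV} but not here, since isoclinicity is assumed.
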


\begin{proof}
 \cite{Huryn-Patrikis-Klevdal-Kedlaya} and \cite{Patrikis} (Theorem 1.1 and Section 4), the overconvergent F-isocrystal $\mathcal{M}^{\dagger}$ over $\mathscr{S}_{k(v)}$ is algebraic and $p$-plain at closed points of $\mathscr{S}_{k(v)}$. As noted in the introduction, Frobenius semisimplicity holds for Shimura varieties of abelian type. For exceptional Shimura varieties, we assume \ref{assumtionfrobss} to hold. Furthermore, as $x$ belongs to the basic stratum of $\mathscr{S}$, and $V$ is an irreducible representation of $G_{\Q_p}$, by \cite{kottwitz} Proposition 1.12, we see that $\mathcal{M}^{\dagger}_x$ is isoclinic.  The corollary now follows from \Cref{thm:mainthm}. 
\end{proof}

\section{Local monodromy of abelian varieties with semi-stable reduction}

\subsection{Interlude on Raynaud extensions} \label{raynaud}
In this section we review uniformation of abelian varieties with semi-stable reduction. To every abelian variety with semi-stable reduction over $K$, we can associate a log F-isocrystal via the theory of $1$-motives and Raynaud extensions. We briefly describe this below along with the data we will need to prove \Cref{thm:badredabvar}. 
\begin{defn}(See \cite{kato_trehan} 4.5.1)\label{logmotive}
We define a 1-motive over $K$ to be a triple $(\Lambda_K, Z,f)$ such that $\Lambda$ and $Z$ are commutative group schemes over $K$ satisfying
\begin{enumerate} 
\item The group scheme $Z$ is semi-abelian.
\item  The group scheme $\Lambda_K \simeq \Z^r$ \'etale locally on $\Spec K$.
\item $f$ is a homomorphism $\Lambda_K\rightarrow Z$ over $K$ .
\end{enumerate}
\end{defn}
The same data over $\o_K$ defines a log-1 motive:
\begin{defn}(See \cite{kato_trehan} 4.6.1)\label{logmotive}
We define a log-1-motive over $\o_K$ to be a triple $(\Lambda, \mathcal{Z},f)$ such that $\Lambda$ and $\mathcal{Z}$ are commutative group schemes over $\o_K$ satisfying $(1)$ and $(2)$ above over $\o_K$ and $f: \Lambda_K\rightarrow\mathcal{Z}_K$ is a group homomorphism over $K$.
\end{defn}

The data of a 1-motive $(\Lambda_K,Z,f)$ such that $T$ and $B$ denote the toric subscheme and the abelian quotient of $Z$. Denote by $X^{*}(T)$ equals the group of characters of $T$. Then this data is equivalent to the triple $(h_K,h_K^*,s_K)$ defined as follows: Let $h_K: \Lambda_K\rightarrow A$ is a group homomorphism, $h_K^*: X^{*}(T)\rightarrow A^{\vee}$ along with a trivialisation $s_K: Y\times X^{*}(T)\rightarrow \mathcal{P}_K$ of the pullback along $h_K\times h_K^*$ of the Poincar\'e biextension $\mathcal{P}_K$ on $B\times B^{\vee}$. Since $B$ has good reduction, $\mathcal{P}_K$ extends to a biextension on $\mathcal{B}\times \mathcal{B}^{\vee}$ where $\mathcal{B}$ over $\o_K$ is the N\'eron model of $B$. The morphisms $h_K$ and $h_K^*$ extend to maps $h$ and $h^{*}$ on $\o_K$ as well. The pullback of $\mathcal{P}$ along $h\times h^{*}$ gives a biextension which is trivial generically because of $s_K$. This yields the existence of a \textit {geometric monodromy pairing}:
$\nu: \Lambda_K\times X^*(T)\rightarrow \Z$ that gives a morphism $\nu: \Lambda_K\rightarrow \Hom(X^*(T),\Z)=X_{*}(T)$. Assuming that $T$ is split, we get a morphism of tori:
$\nu\otimes 1: \Lambda\otimes \mathbb{G}_m\rightarrow X_{*}(T)\otimes \mathbb{G}_{m}=T$ that extends to $\o_K$ (see \cite{monodromylogiso}, Section 1.2).

We now relate this to the theory of Raynaud extensions coming from degenerating abelian varieties. Let $A$ is an abelian variety with semi-stable reduction over $K$. The special fiber of its N\'eron model of $A$ is a semi-abelian variety which we denote by $A_0$. Completing the identity component of the N\'eron model along $A_0$, we get a formal scheme $\widehat{Z}$ over $\o_K$ such that $\widehat{Z}$ is an extension of a formal torus $\widehat{T}$ by a formal abelian scheme $\widehat{B}$ with $\Lambda = \Hom(\widehat{T}^{\vee}, \widehat {\mathbb{G}_m})$. The extension $\widehat{Z}$ can be algebraized to give an extension $\mathcal{Z}$ of an abelian variety $\mathcal{B}$ by a torus $\mathcal{T}$ over $\o_K$ and $\Lambda =\Hom (\mathcal{T}^{\vee}, \mathbb{G}_m)$, see \cite{chai_faltings}, Chapter II Section 1. The ``Raynaud generic fiber" gives an isomoprhism of rigid analytic varieties $A(\overline{K}) = Z(\overline{K})/\Lambda$ where
$\Lambda$ is realized as a multiplicative lattice in $Z(\overline{K})$.  We call $Z$ the \textit{Raynaud extension} of $A$ (see \cite{chai_faltings} Theorem 6.2). In summary, we have the following diagram over $K$: 

$$\xymatrix{ &\Lambda \ar[d]^{f}&\\ T\ar[r] & Z \ar[r] \ar[d]&B& \\
& A&  }$$
 As $T$ extends over $\o_K$, it is unramified and it splits over a finite unramified extension of $K$ and therefore, up to a finite extension, we can assume that the lattice $\Lambda$ is split. 

To every one motive, we can associate a $p$-divisble group as describe in \cite{kato_trehan} 4.5.3. Further, via composing with the co-variant Dieudonn\'e functor $\mathbb{D}(\cdot)$, we get an associated Dieudonn\'e crystal on the crystalline site of $\Spec K$ (see \cite{kato_trehan}, 4.5.4). Via the $p$-adic uniformisation theory, the $p$-divisible group associated to the 1-motive of $A$ identifies with the $p$-divisible group of $A$. There is also a log-Dieudonn\'e functor obtained as a composition of functors from log-1-motives to log $p$-divisible groups and then to log-Dieudonne crystals. In \cite{kato_trehan} 4.7, the authors give a direct construction of the log Dieudonn\'e functor $\mathbb{D}_{\log}(\cdot)$ from log-1 motives to Dieudonn\'e crystals on the log scheme $\Spec \o_K$ with the log structure associated to the closed point $t=0$, such that its pullback to $\Spec K$ is compatible with the Dieudonn\'e functor. The  pullback of a log-1 motive $(\Lambda,\mathcal{Z}, f):= \mathbb{M}$ over $\o_K$ to $\Spec K$ will be understood as the $1$-motive  $(\Lambda_K,Z, f)$.

To maintain consistency with the previous section, we will work with the contravariant (log) Dieudonn\'e functor which is obtained by taking the dual of the log Dieudonn\'e crystal as in \cite{kato_trehan}, Section 4.1.
\subsection{Filtration on the Dieudonn\'e realization of 1-motives}  Every log 1-motive $\mathbb{M}=(\Lambda,\mathcal{Z}, f)$ inducues a natural 3-step weight filtration on the Dieudonn\'e crystal $W_{\bullet}\mathbb{D}_{\log}(\mathbb{M})$ given by: 
$$0\subset \Lambda\otimes \Q_p/\Z_p\subset \mathbb{D}_{\log}(\Lambda,\mathcal{B},\overline{f})\subset \mathbb{D}_{\log}(\mathbb{M})$$
where $\bar f: \Lambda\rightarrow \mathcal{B}$ is the composition of $f$ and the quotient map from $\mathcal{Z}$ to $\mathcal{B}$. Their pullback to $\Spec K$ gives the canonical filtration on the Dieudonn\'e crystal associated to the 1-motive $(\Lambda_K, Z, f)$. We have the following short exact sequences from these filtration.
\begin{enumerate}

 \item \label{1} Then we have the following exact sequence: $$0\rightarrow \mathbb{D}_{\log}(\Lambda, \mathcal{B}, \bar f)\rightarrow \mathbb{D}_{\log}(\mathbb{M}) \rightarrow \mathbb{D}_{\log}(\mathcal{T})\rightarrow  0$$

    \item \label{2}The $2$-step filtration gives us the following exact sequence of log Dieudonn\'e modules: $$0\rightarrow \Lambda\otimes \Q_p/\Z_p\rightarrow  \mathbb{D}_{\log}(\mathbb{M})\rightarrow \mathbb{D}_{\log}(\mathcal{Z})\rightarrow 0$$ 
\end{enumerate}
We evaluate the Dieudonn\'e crystal on the the logarithmic PD-thickenings $\Spec \o_K\hookrightarrow W_n(\F_q)[[t]]$ with the canonical log structure associated to $t=0$, take the inverse limit over $n$ and invert $p$, to work with a log F-isocrystal over $\mathcal{E}_+$ as defined in \Cref{2.3}. We keep the same notation of the previous paragraph for the associated log F-isocrystal over $\mathcal{E}_+$ and their pullbacks to $\mathcal{E}$.

By the the discussion in Section \ref{2.3}, pulling back $\mathbb{D}_{\log}(M)$ to the divisor $t=0$ will give us a log Dieduonn\'e module $(M_0,F_0, N_0):=\mathbb{D}_{\log}(\Lambda, \mathcal{Z},f)\otimes L^{\ur}$ over $L^{\ur}.$ The geometric monodromy pairing $\nu$ induces the nilpotent operator
$N_0: M_0\rightarrow M_0$ which is given by the composition: $$M_0\rightarrow \mathcal{D}_{\log}(\mathcal{T})\otimes L^{\ur}\xrightarrow{(\nu\otimes 1)\otimes L^{\ur}} (\Lambda\otimes {\mu_{p^{\infty}}})\otimes L^{\ur}\xrightarrow{\Id(1)} (\Lambda\otimes \Q_p/\Z_p)\otimes L^{\ur}\rightarrow M_0$$ where $\Id(1)$ is a $W(k)(1/p)$-linear morphism that takes the generator of $\mu_{p^{\infty}}$ to that of $\Q_p/\Z_p$ and satisfies the condition $F\circ \Id(1)\circ V=\Id(1)$.  Therefore, the image of $N_0$ lies in ${\Lambda}\otimes\ \Q_p/\Z_p\hookrightarrow M_0$ and we have $N_0^2=0$ (see for instance, \cite{monodromylogiso}, Section 3). 

\begin{lem}\label{lem:smalllem2}
Consider the Dieudonn\'e module $\mathbb{D}_{\log}(\Lambda, \mathcal{B}, \overline{f})\otimes_{\mathcal{E}_+} L^{\ur}:=(W_0, N_0|_{W_0}, F_0|_{W_0})$  over $L^{\ur}$. Then $N|_{W_0}=0$ and we can find a basis of $W_0$ such that the (linearised) Frobenius
$$
F = \begin{pmatrix}
    \Id& 0\\
    0& F_{B_0}\\
\end{pmatrix} 
$$
where $F_{B_0}$ is the (linearised) Frobenius of the Dieudonn\'e module of $B_0$. 
\end{lem}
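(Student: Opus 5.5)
The proof will split into two parts: first show that the residue of the monodromy operator vanishes on $W_0$, then compute the Frobenius using the weight filtration of the log 1-motive $(\Lambda,\mathcal{B},\overline{f})$.

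\textbf{Vanishing of $N$.} The nilpotent operator $N_0$ on $M_0$ was described above as a composition that first projects $M_0\to\mathbb{D}_{\log}(\mathcal{T})\otimes L^{\ur}$. By the exact sequence (\ref{1}), $\mathbb{D}_{\log}(\Lambda,\mathcal{B},\overline{f})$ is exactly the kernel of $\mathbb{D}_{\log}(\mathbb{M})\to\mathbb{D}_{\log}(\mathcal{T})$, and this remains true after reducing mod $t$. Hence the composite defining $N_0$ kills $W_0$, so $N_0|_{W_0}=0$. An alternative, intrinsic argument is available: for the log 1-motive $(\Lambda,\mathcal{B},\overline{f})$ the torus part is trivial, so the geometric monodromy pairing $\nu\colon\Lambda\times X^*(T)\to\Z$ has trivial target and the induced residue is zero. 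I will state both, and rely on functoriality of $\mathbb{D}_{\log}$ and of the reduction mod $t$ map of Section \ref{2.3} to see that the residue of the sub-object is the restriction of $N_0$.

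\textbf{Frobenius.} The weight filtration restricted to $(\Lambda,\mathcal{B},\overline{f})$ gives, via sequence (\ref{2}) applied to this 1-motive, a short exact sequence
\[
0\to(\Lambda\otimes\Q_p/\Z_p)\otimes L^{\ur}\to W_0\to\mathbb{D}(\mathcal{B}_0)\otimes L^{\ur}\to 0 .
\]
I will then check the following:
\begin{enumerate}
\item The first term is the Dieudonn\'e module of the constant étale group $\Lambda\otimes\Q_p/\Z_p$, with $\Lambda$ split after the finite unramified extension already allowed. Its Frobenius is therefore the identity on a $\Z$-basis of $\Lambda$, so it is unit root.
\item The quotient is the Dieudonn\'e module of the abelian variety $B_0$ over $\F_q$, with linearised Frobenius $F_{B_0}$.
\item The sequence splits Frobenius-equivariantly, which produces the block-diagonal matrix.
\end{enumerate}

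\textbf{Splitting (main obstacle).} One route is slopes. $F_{B_0}$ has slopes in $[0,1]$, but its eigenvalues are Weil numbers of weight $1$, whereas the sub has eigenvalue $1$ (weight $0$). So $\Id$ and $F_{B_0}$ share no eigenvalue, and over $L^{\ur}$, after passing to $\varphi=F^s$ which is linear, $\Ext^1$ between the two pieces vanishes. Concretely, an extension class is a matrix $C$ with $F=\begin{pmatrix}\Id & C\\ 0 & F_{B_0}\end{pmatrix}$. One solves $X-X F_{B_0}=C$ for $X$, which is possible because $1-\lambda$ is invertible for every eigenvalue $\lambda$ of $F_{B_0}$, and then changes basis by $\begin{pmatrix}\Id & X\\ 0&\Id\end{pmatrix}$. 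A more geometric alternative is to note that over $\F_q$ the map $\overline{f}$ specialises to a morphism from an étale lattice to $\mathcal{B}_0$ whose associated extension of $p$-divisible groups is torsion, hence rationally split in the isogeny category. If the linear-algebra argument runs into semilinearity issues, I will fall back on this geometric route.
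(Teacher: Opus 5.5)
Your proposal is correct and follows essentially the same route as the paper: $N_0$ factors through the projection $M_0\to\mathbb{D}_{\log}(\mathcal{T})\otimes L^{\ur}$, whose kernel is $W_0$, and the linearised Frobenius on $W_0$ is block upper triangular with diagonal blocks $\Id$ and $F_{B_0}$, which splits because $1$ is not an eigenvalue of $F_{B_0}$. The only difference is how you get the splitting: you justify that $1$ is not an eigenvalue via the weight of Weil numbers and remove the off-diagonal block with a Sylvester equation, so you need neither the supersingularity nor the diagonalisability of $F_{B_0}$ that the paper invokes.
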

\begin{proof}

By the above description of $N_0$ and the exact sequence in (\ref{2}), we see that $N_0|_{W_0}=0$. By extending the basis of $\Lambda\otimes \Q_p/\Z_p$ to $W_0$, we write the Frobenius as an upper block form with the diagonal matrices $\Id$ and $F_{B_0}$. As $B_0$ is a supersingular abelian variety, $F_{B_0}$ is diagonalisable and has eigen values different than $1$. Therefore, we can put $F$ in the above diagonal form.
\end{proof}

\subsection{Proof of \Cref{thm:badredabvar}} Suppose $A/K$ has semi-abelian reduction. We use the same notation from Section \ref{raynaud} for  the data of the Raynaud extension. Let $\mathbb{U}$ be the unit root F-isocrystal associated to the $p$-power torsion points of $A$ and write $\rho_A$ to be the associated Galois representation. Let $G$ be the the Zariski closure of image of $\rho_A$ and $H$ the Zariski closure of the inertia subgroup.

\begin{thm}(\Cref{thm:badredabvar})
Suppose $A$ has semi-supersingular reduction. Then $G/H$ is finite.
\end{thm}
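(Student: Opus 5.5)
The plan mirrors the proof of \Cref{proofmainthm}, with the log F-isocrystal $\overline{M}:=\mathbb{D}_{\log}(\mathbb{M})$ over $\mathcal{E}_+$ attached to the log-1-motive of $A$ (\Cref{raynaud}) playing the role of the extension to $\mathcal{E}_+$. After a finite unramified extension of $K$, which changes neither finiteness of $[G:H]$ nor the inertia group, we may assume $T$ and $\Lambda$ are split. By \Cref{abvar_overconv_isoc}, $M:=\overline{M}\otimes\mathcal{E}$ is the overconvergent F-isocrystal $\mathbb{D}(A[p^\infty])$, and $\mathbb{U}\subset M$ is its unit root sub-object. Let $\mathbb{W}$ be a faithful representation of $G/H$ with associated unit root F-isocrystal $W$. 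Applying \Cref{prop:unitroot_goodredn} to $\overline{M}$, we obtain $\overline{W}\in\langle\overline{M}\rangle$ over $\mathcal{E}_+$ with $\overline{W}\otimes\mathcal{E}=W$, whose reduction $W_0$ is a unit root Dieudonn\'e module in $\langle M_0\rangle$, where $M_0=\overline{M}\otimes L^{\ur}$ carries $(F_0,N_0)$. As in \Cref{proofmainthm}, it suffices to show that the linearised Frobenius $\varphi$ acts on $W_0$ with finite order. Then $G_K/I_K$ acts on $(W_0\otimes L')^{\varphi=1}$ through a finite quotient, and faithfulness of $\mathbb{W}$ gives $[G:H]<\infty$.

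The key step is controlling the eigenvalues of $\varphi$ on objects of $\langle M_0\rangle$. Here $M_0$ is not isoclinic; it has the three-step weight filtration
\[
\Lambda\otimes L^{\ur}\subset W_0'\subset M_0,
\]
with $W_0'=\mathbb{D}_{\log}(\Lambda,\mathcal{B},\bar f)\otimes L^{\ur}$. The graded pieces are, respectively, étale (slope $0$, $\varphi$ trivial by \Cref{lem:smalllem2}), the Dieudonn\'e module of the supersingular $B_0$ (isoclinic of slope $1/2$, semisimple, eigenvalues $q^{1/2}$ times roots of unity), and the toric part $\mathbb{D}(\mathcal{T})$ (slope $1$, $\varphi=q$ up to roots of unity after splitting). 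Any object of $\langle M_0\rangle$ is a subquotient of some $T(M_0)$, so every eigenvalue of $\varphi$ on $W_0$ has the form $\zeta q^{a/2}$ with $\zeta$ a root of unity and $a\in\Z$. Since $W_0$ is unit root, its $p$-adic valuations vanish, which forces $a=0$. Thus all eigenvalues of $\varphi$ on $W_0$ are roots of unity.

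The main obstacle is semisimplicity. $M_0$ is only an iterated extension, and the ramified nature of $\mathbb{U}$ is visible precisely in $N_0\neq 0$, so $\varphi$ could a priori have unipotent parts on $W_0$. My plan is to use that the eigenvalues on the three graded pieces are $1$, $q^{1/2}\zeta$ and $q\zeta'$, which are pairwise distinct. Hence $\varphi$ on $M_0$ is diagonalisable, as in the argument of \Cref{lem:smalllem2}: extensions between generalized eigenspaces for distinct eigenvalues split. Each graded piece is itself semisimple, the first two by \Cref{lem:smalllem2} and the toric one by construction. Then $\varphi$ is semisimple on $M_0$, hence on $T(M_0)$, hence on the subquotient $W_0$, and a semisimple operator whose eigenvalues are roots of unity has finite order. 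Note that $N_0$ need not commute with $\varphi$ in a way that obstructs this, since we only use the $(\varphi)$-structure of $W_0$ and ignore $N_0$; \Cref{prop:unitroot_goodredn} already gives $\overline{W}$ as an F-isocrystal with regular connection, so $N_0|_{W_0}=0$ anyway. Combining with the first paragraph concludes the proof.
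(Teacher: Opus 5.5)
Your proof is correct, but it takes a somewhat different route from the paper's.

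\textbf{The paper's route.} The paper first observes that $\mathbb{U}$ maps to zero in the slope-one quotient $\mathbb{D}(T)$. Hence $\mathbb{U}$ already lies in the middle piece $\mathbb{D}(\Lambda_K,B,\bar f)$. It then applies \Cref{prop:unitroot_goodredn} only to the smaller log F-isocrystal $\mathbb{D}_{\log}(\Lambda,\mathcal{B},\bar f)$. By \Cref{lem:smalllem2}, its reduction has $N_0=0$ and Frobenius $\mathrm{diag}(\Id,F_{B_0})$. So semisimplicity is immediate, and every eigenvalue on $W_0$ is a monomial in the eigenvalues of $F_{B_0}$, which all have the single slope $1/2$. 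The valuation argument of \Cref{proofmainthm} then runs unchanged.

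\textbf{Your route.} You apply \Cref{prop:unitroot_goodredn} to the whole $\mathbb{D}_{\log}(\mathbb{M})$ and accept a reduction $M_0$ with three slopes and $N_0\neq 0$. You compensate in two ways.
\begin{enumerate}
\item Semisimplicity of $\varphi$ on $M_0$ comes from extending the splitting argument of \Cref{lem:smalllem2} to the three-step filtration. The eigenvalue sets on the graded pieces are disjoint, since their valuations $0$, $\tfrac12 v(q)$, $v(q)$ differ. So each generalized eigenspace of $M_0$ is isomorphic to one in a single semisimple graded piece.
\item The eigenvalue step uses the explicit shape $\zeta q^{a/2}$ of all eigenvalues. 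These are the Weil numbers of the supersingular $B_0$, together with $1$ and $q$ up to roots of unity. This replaces the isoclinic ``decent'' formalism.
\end{enumerate}

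\textbf{Comparison.} Your version does not need to locate $\mathbb{U}$ inside the middle piece. It also sidesteps the paper's slightly loose claim that $\overline{W}\otimes L^{\ur}$ is decent: that module has slopes $0$ and $1/2$, though the eigenvalue $1$ is harmless there. The cost is that you need semisimplicity on the toric graded piece, and the arithmetic fact that supersingular Frobenius eigenvalues are $\sqrt{q}$ times roots of unity. The paper's version works with a sub-object on which the monodromy already vanishes. This keeps its final step literally identical to the good-reduction case.
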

\begin{proof}
As $\mathbb{U}\subseteq \mathbb{D}(\Lambda_K, Z, f)$ is unit root and $\mathbb{D}(T)$ has slope one, the composition $\mathbb{U}\subset \mathbb{D}(\Lambda_K, G, f)\rightarrow \mathbb{D}(T)$ is zero. By (\ref{2}), $\mathbb{D}(\Lambda_K, B, f)$. Let $\mathbb{W}$ be a faithful representation of $G/H$.  By Theorem \ref{thm:tsuzuki_unit_root}, we can associate an overconvergent unit root $F$-isocrystal $W\in \langle \mathbb{D}(\Lambda, B,\overline{f})\rangle$ over $\mathcal{E}$.  By \Cref{prop:unitroot_goodredn}, there exists an F-isocrystal $\overline{W}$ over $\mathcal{E}_+$ such that  $\overline{W}\in \langle{\mathbb{D}_{\log}(\Lambda,\mathcal{B},f)}\rangle$ and $\overline{W}\otimes L^{\ur} \in \langle W_0\rangle$ where $W_0:= \mathbb{D}_{\log}(\Lambda, \mathcal{B},\bar f)\otimes L^{\ur}$.  As the (linearised) Frobenius of $B_0$ is semisimple, by Lemma \ref{lem:smalllem2} the (linearised) Frobenius on $W_0$ is semisimple.
Further as $B_0$ is supersingular, $\overline{W}\otimes L^{\ur}$ is decent. Hence, all the eigen values of Frobenius on $\overline{W}\otimes L^{\ur}$ are of the form $\alpha = \alpha_1^{d_1}\cdots\alpha_n^{d_n}$ where $\alpha_i$ are eigen values of $F_{B_0}$. Therefore, we can now run the proof of \Cref{proofmainthm} to conclude that $G/H$ is finite.
\end{proof}

\section{Reduction of Hecke orbit}The proof of the following corollary follows analogously to Corollary 2.10 in \cite{KLSS} and Corollary 8.2 in \cite{myownpaper}. For the sake of completeness, we present it here in the equi-characteristic setting.
\begin{cor}
Let $A/K$ be an ordinary abelian variety. Suppose that $A$ has supersingular reduction  (resp. semi-supersingular reduction). Then the reduction of its Hecke orbit (resp. $p$-power Hecke orbit) is finite.
\end{cor}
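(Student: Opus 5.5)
We follow the strategy of \cite{KLSS}, Corollary 2.10. A Hecke translate $A'$ of $A$ is an abelian variety over $K$ with a quasi-isogeny $A\to A'$, i.e.\ a $\widehat{\Z}$-lattice (up to scaling) in $\widehat{V}(A)=\prod_\ell V_\ell(A)$. Since $A$ has good (resp.\ semi-stable) reduction, so does $A'$, and the reduction of $A'$ is determined by its Néron model. Two points of the Hecke orbit have the same reduction once the corresponding quasi-isogenies agree after a finite unramified extension of $K$ up to the action of the inertia-fixed structure. The plan is to show that, after a finite extension $K'/K$, only finitely many reductions occur. Replacing $K$ by a finite extension only changes the residue field by a finite extension and does not change the reduction map, so we may enlarge $K$ freely.

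\textbf{Prime-to-$p$ part.} We treat $\ell\neq p$ first. At good (resp.\ semi-stable) reduction, inertia acts on $T_\ell(A)$ trivially (resp.\ unipotently, and trivially after a finite extension on the weight-filtration pieces). An $\ell$-power isogeny $A\to A'$ has kernel a finite subgroup scheme $C\subset A[\ell^n]$. After passing to the unramified extension over which $C$ is defined, $C$ extends to a finite étale subgroup of the Néron model (resp.\ of the toric/abelian part), and $A'_0=A_0/C_0$. Consequently, the $\ell$-Hecke orbit of $A$ reduces into the $\ell$-Hecke orbit of $A_0$ over $\overline{\F}_q$.

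Here the supersingularity of $A_0$ enters. For a supersingular point over a finite field, its prime-to-$p$ Hecke orbit within the supersingular locus is finite, since there are finitely many supersingular points over each $\F_{q^n}$. The real issue, however, is the field of definition: the reductions $A_0/C_0$ must all be defined over a \emph{bounded} finite field. We get this because $\Gal(K^{\sep}/K)$ acts on $\ell$-power torsion through its unramified quotient, i.e.\ through Frobenius of $A_0$. So the reductions are isogenous to $A_0$ via isogenies whose kernels are Frobenius-translates, and they are defined over $\F_{q^m}$ with $m$ bounded by the field of definition of the (supersingular, hence all-isogenous) class. In the semi-supersingular case we restrict to the $p$-power Hecke orbit, as the statement does, and skip this step.

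\textbf{$p$-power part, the key step.} A $p$-power isogeny $A\to A'$ corresponds to a $\Z_p$-lattice in the Dieudonné module $M$ over $\mathcal{E}$ (equivalently, via the log extension $\overline{M}$ over $\mathcal{E}_+$ of \Cref{prop:extfullyfaithful}). The reduction of $A'$ is then determined by the induced lattice in $M_0$, together with its Frobenius (and $N_0$) structure.

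Because $A$ is ordinary, $M$ is an extension of the étale part $U$ by the multiplicative part, where $U$ corresponds to $\rho_A$ on $T_p$. By \Cref{thm:mainthm} (resp.\ \Cref{thm:badredabvar}), $\rho_A(I_K)$ has finite index in $\rho_A(G_K)$. After a finite extension of $K$ we may therefore take $\rho_A(G_K)=\rho_A(I_K)$. Now a $p$-power isogeny's kernel $C\subset A[p^n]$ decomposes, up to a bounded error, into its connected-multiplicative and étale parts. The reduction of $A/C$ only depends on $C$ up to the action of $\rho_A(G_K)$ and of the connected part, whose reduction is determined by its order. Since the inertia image is open in $\rho_A(G_K)$, which is open in its Zariski closure, the inertia orbits of étale-type lattices are open. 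The set of $G_K$-orbits of such lattices up to homothety is therefore finite modulo the compact inertia image, and this gives finitely many reductions.

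Concretely, we would imitate \cite{KLSS}: show that the map from lattices $\Lambda\subset V_p(A)$ to reductions factors through $\rho_A(I_K)\backslash\{\Lambda\}/\text{homothety}$. Then we use that the image of inertia contains an open subgroup of the $\Q_p$-points of a group acting with finitely many orbits on lattices, namely the parabolic/Levi structure coming from \Cref{thm:parabolic}.

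\textbf{Main obstacle.} The hardest step is justifying that inertia-conjugate isogenies have the same reduction in equicharacteristic. Over $\Q_p$, \cite{KLSS} do this via $D_{\cris}$. Here we would instead use full faithfulness (\Cref{prop:extfullyfaithful}) and the Néron mapping property: an isogeny $A'\to A''$ defined over an extension $K'/K$ that is totally ramified still reduces to an isomorphism on special fibres whenever it is inertia-induced. The subtlety is that $K'/K$ may be wildly ramified and inseparable phenomena intervene, so we would apply the Néron property over $\mathcal{O}_{K'}$, where both $A'$ and $A''$ retain good (resp.\ semi-stable) reduction, and compare residue fields, which do not change.
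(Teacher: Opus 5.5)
Your proposal has a genuine gap at the step that is supposed to produce finiteness. You correctly reduce, via \Cref{cor:goodredSV} (resp.\ \Cref{thm:badredabvar}), to $\rho_A(G_K)=\rho_A(I_K)$ after a finite extension of $K$. You then try to conclude by counting $\rho_A(I_K)$-orbits of lattices up to homothety, asserting there are finitely many ``modulo the compact inertia image''. This is false. The stabiliser of a lattice in $\GL(V)$ is compact open, so every orbit of a compact group on homothety classes of lattices is finite, and there are infinitely many orbits as soon as $\dim V\ge 2$. Neither openness of the image nor \Cref{thm:parabolic} changes this. Moreover, in characteristic $p$ the \'etale module $V_p(A)$ does not parametrise $p$-power isogenies at all; for instance, it misses quotients by connected kernels.

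The idea you are missing is that no orbit counting is needed. Once $\rho_A(G_K)=\rho_A(I_K)$, the $p$-power torsion is defined over an extension of $K$ with the same residue field. Hence every $p$-power isogeny and its target is defined over an extension of $K$ with a fixed finite residue field. All reductions of the $p$-power Hecke orbit are then (semi-)abelian varieties of fixed dimension over one fixed finite field, and there are only finitely many of those up to isomorphism. Your last paragraph (``compare residue fields, which do not change'') points in this direction, but you never invoke this finiteness, and it is exactly how the paper concludes.

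The prime-to-$p$ step has the analogous hole. You need the reductions to live over a bounded finite field. Neither ``kernels are Frobenius-translates'' nor ``finitely many supersingular points over each $\F_{q^n}$'' bounds anything, and neither proves your claim that the prime-to-$p$ Hecke orbit of a supersingular point is finite. The missing observation is this: for supersingular $A_0$, after a finite extension of $k$, Frobenius acts on each $T_\ell(A)$, $\ell\neq p$, by a scalar. Since $G_K$ acts on prime-to-$p$ torsion through this Frobenius (good reduction), every finite subgroup of prime-to-$p$ torsion is Galois-stable, so every prime-to-$p$ isogeny is already defined over $K$. Combining this with the $p$-power part, every Hecke translate of $A$ is defined over an extension of $K$ with a fixed finite residue field. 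The same finiteness of isomorphism classes over a finite field then finishes the proof. With this argument, the ``main obstacle'' you describe about inertia-conjugate isogenies reducing to isomorphisms is not needed.
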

\begin{proof}
 In both the cases, by \Cref{cor:goodredSV} and \Cref{thm:badredabvar}, it is enough to replace $K$ by a finite extension and assume that the $p$-power torsion of $A$ is defined over a ramified extension of $K$. Consequently, all the $p$-power isogenies are defined over a ramified extension of $K$. Hence the reduction of the $p$-power Hecke orbit of $A$ is defined over a fixed finite field extension $\F_{q'}$ of $k$. As there are finitely many isomorphism classes of semi-abelian varieties over a fixed finite field, the result follows when $A$ has semi-supersingular reduction. 

On the other hand, more is true when $A$ has good supersingular reduction. As the prime-to-$p$ power torsion of $A$ is entirely determined by the action of the Frobenius on its reduction over $k$, we can assume that the action of the Frobenius on the prime-to-$p$ power torsion is via scaling by $q^{1/2}$, up to a finite extension of the residue field of $K$. By \Cref{cor:goodredSV}, replacing $K$ with a finite extension, the $p$-power isogenies are defined over a ramified extension of $K$. Their reduction is therefore defined over a fixed finite field. The result now follows as there are only finitely many isomorphism classes of abelian varieties over a finite field. 
\end{proof}

 \bibliography{bibli}
\bibliographystyle{amsalpha}
 \end{document}